\numberwithin{equation}{section}
\newtheorem{theorem}{Theorem}[section]
\newtheorem{corollary}[theorem]{Corollary}
\newtheorem{lemma}[theorem]{Lemma}
\newtheorem{example}[theorem]{Example}
\theoremstyle{definition}
\newtheorem{remark}[theorem]{Remark}
\theoremstyle{definition}
\theoremstyle{definition}
\def\dashint{\operatorname%
{\,\,\text{\bf-}\kern-.98em\DOTSI\intop\ilimits@\!\!}}
\def\\det{\text{det}}
\def\.5{\frac{1}{2}}
\newcommand{\RN}[1]{%
  \textup{\uppercase\expandafter{\romannumeral#1}}%
}
\renewcommand{\epsilon}{\varepsilon}
\newcounter{marnote}
\begin{document}
\title[Singularities of the stress concentration]{Singularities of the stress concentration in the presence of $C^{1,\alpha}$-inclusions with core-shell geometry}

\author[Z.W. Zhao]{Zhiwen Zhao}

\address[Z.W. Zhao]{Beijing Computational Science Research Center, Beijing 100193, China.}
\email{zwzhao365@163.com}

\author[X. Hao]{Xia Hao}
\address[X. Hao]{School of Mathematical Sciences, Beijing Normal University, Beijing 100875, China. }
\email{xiahao0915@163.com}


\date{\today} 



\begin{abstract}
In high-contrast composites, if an inclusion is in close proximity to the matrix boundary, then the stress, which is represented by the gradient of a solution to the Lam\'{e} systems of linear elasticity, may exhibits the singularities with respect to the distance $\varepsilon$ between them. In this paper, we establish the asymptotic formulas of the stress concentration for core-shell geometry with $C^{1,\alpha}$ boundaries in all dimensions by precisely capturing all the blow-up factor matrices, as the distance $\varepsilon$ between interfacial boundaries of a core and a surrounding shell goes to zero. Further, a direct application of these blow-up factor matrices gives the optimal gradient estimates.
\end{abstract}

\maketitle


\section{Introduction}
In high-contrast fiber-reinforced composite materials, which comprise of inclusions and a matrix, it is common that hard inclusions are closely located or close to touching the boundary of background medium. When the distance between inclusions or between the inclusions and the matrix boundary tends to zero, the stress field always concentrates highly in the narrow regions between them. To give a clear understanding of this high concentration, there has been a sustained effort to develop quantitative theories over the past twenty years since the great work of Babu\u{s}ka et al. \cite{BASL1999}, where numerical computation of damage and fracture in linear composite systems was studied. The numerical investigation showed that the size of the strain tensor retains bounded as the distance between two inclusions tends to zero. This observation was demonstrated in the subsequent work \cite{LN2003} completed by Li and Nirenberg. It is worth emphasizing that they established stronger $C^{1,\alpha}$ estimates for general divergence form elliptic systems including the Lam\'{e} system with piecewise H\"{o}lder continuous coefficients for domains of arbitrary smooth shapes in all dimensions. The corresponding results for scalar elliptic equations can refer to \cite{BV2000} and \cite{LV2000}. Recently, Dong and Li \cite{DL2019} established the optimal upper and lower bound estimates on the gradient of a solution to a class of non-homogeneous elliptic equations with discontinuous coefficients. These estimates especially showed the clear dependence on elliptic coefficients, which answered open problem $(b)$ proposed by Li and Vogelius \cite{LV2000} in the case of circular inclusions.

In the context of electrostatics, Ammari et al. \cite{AKL2005} were the first to investigate the conductivity equation for two circular inclusions, also called the simplified scalar model of the elasticity problem. They showed that the blow-up rate of the gradient is $\varepsilon^{-1/2}$ by constructing a lower bound on the gradient in two dimensions. After that, it brought a long list of literature on this topic, for instance, see \cite{BLY2010,Y2007,Y2009,BLY2009,LY2009,AKLLL2007} and the references therein. It has been proved there that the blow-up rates of the gradients are $\varepsilon^{-1/2}$ in two dimensions, $\varepsilon|\log\varepsilon|^{-1}$ in three dimensions and $\varepsilon^{-1}$ in dimensions greater than or equal to four, respectively. However, since there is no maximum principle holding for the systems, it prevented us extending the results of the perfect conductivity equation to the Lam\'{e} systems until Bao, Li and Li \cite{BLL2015,BLL2017} applied an iteration technique with respect to the energy which was created in \cite{LLBY2014} to establish the pointwise upper bounds on the gradients of solutions to the Lam\'{e} systems with partially infinite coefficients. Their results indicated that the blow-up appears in the shortest line segment between inclusions. Recently, Kang and Yu \cite{KY2019} gave a complete description for the singularities of the stress concentration in the thin gap between two inclusions in dimension two by introducing singular functions constructed by nuclei of strain. Beside these aforementioned work related to interior estimates of the gradients, there is another direction to investigate the boundary estimates such as \cite{BJL2017}, which showed that the boundary data only contributes to the blow-up factor with no singularities in the presence of $2$-convex inclusions. There is no intrinsic difference in terms of the role of the boundary data played in the interior and boundary estimates until the boundary data of $k$-order growth was investigated in \cite{LZ2020}, where Li and Zhao found that this type of boundary data can increase the stress blow-up rate and change the position of optimal blow-up rate simultaneously. Miao and Zhao \cite{MZ202102} further improved the results in \cite{BJL2017,LZ2020} by capturing the blow-up factor matrices in all dimensions.

Since the modern engineering trends are going toward more reliance on computational predictions, it is significantly important to give a precise description for the singular behavior of the concentrated field to assess the level of accuracy in numerical results. Indeed, the last ten years has witnessed a growing scientific literature investigating the asymptotics of the concentrated field for the perfect conductivity problem such as \cite{KLY2013,ACKLY2013,KLY2014,LLY2019,l2020,ZH202101,HZ2021}. However, it is not easy to extend the asymptotic results for the scalar equation to the Lam\'{e} system due to the increase of the number of free constants, which makes elastic problem much more complex than the perfect conductivity problem. Additionally, for nonlinear $p$-Laplace equation, Gorb and Novikov \cite{GN2012} captured a stress concentration factor for the purpose of establishing the $L^{\infty}$ estimate of the gradient. The results were subsequently extended to the Finsler $p$-Laplacian by Ciraolo and Sciammetta \cite{CS2019,CS20192}. It is worth pointing out that the smoothness of inclusions in the above-mentioned work is required for at least $C^{1,\alpha}$, $0<\alpha<1$. Kang and Yun \cite{KY2019002} recently gave a quantitative characterization for the enhanced field due to presence of the bow-tie structure, which is a special Lipschitz domain. For more related issues and investigations, see \cite{G2015,KLY2015,ABTV2015,KY2020,KL2019,GB2005} and the reference therein.

In the present work, we consider the elasticity problem in the presence of a $C^{1,\alpha}$-inclusion with the core-shell structure which comprises of a core and a surrounding shell, where the core is close to touching the interfacial boundary of the shell. The primary objective of this paper is to establish the precise asymptotic formulas of the stress concentration in any dimension by capturing all the blow-up factor matrices. As an immediate consequence of these blow-up factor matrices, we obtain the optimal gradient estimates. Our results improve and extend the gradient estimates and asymptotics in \cite{CL2019}. In addition, the results of this paper also indicate that for $C^{1,\alpha}$ inclusions close to touching the external boundary, the boundary data doesn't contribute to the singularities of the stress. This is different from the blow-up phenomena revealed in \cite{LZ2020} for $C^{2,\alpha}$ inclusions. In general, the singularities of the stress will be amplified with the deterioration of smoothness of inclusions, while the singular effect of the boundary data will disappear in this case.

We now present an overview of the rest of this paper. In Section \ref{formulation}, we describe our problem and list the main results. We carry out a linear decomposition \eqref{Le2.015} of the gradient of the solution $u$ to problem \eqref{La.002} below in Section \ref{prf results} and then give the proofs of Theorems \ref{Lthm066} and \ref{Lthm06666} in Section \ref{pth results}, which consist of the following asymptotic expansions: $(a)$ asymptotics of $\nabla u_{i}$, $u_{i}$, $i=0,1,...,\frac{d(d+1)}{2}$ are defined by \eqref{P2.005}; $(b)$ asymptotics of the free constants $C^{i}$, $i=1,2,...,\frac{d(d+1)}{2}$ determined by the third line of equation \eqref{La.002}, their proofs are given in Section \ref{SEC005}. Example \ref{CORO001} is presented in Section \ref{SEC006}.

\section{Problem setting and main results}\label{formulation}
\subsection{Formulation of the problem}
Let $D_{1}^{\ast}$ be a convex $C^{1,\alpha}$-subdomain inside a bounded open set $D$ with $C^{1,\alpha}~(0<\alpha<1)$ boundary, which touches the external boundary $\partial D$ only at one point. That is, after translation and rotation if necessary,
\begin{align*}
\partial D_{1}^{\ast}\cap\partial D=\{0\}\in\mathbb{R}^{d},\quad D\subset\{(x',x_{d})\in\mathbb{R}^{d}\,|\,x_{d}>0\}.
\end{align*}
Here and after, we utilize superscript prime to denote ($d-1$)-dimensional domains and variables. By a translation, denote
\begin{align*}
D_{1}^{\varepsilon}:=D_{1}^{\ast}+(0',\varepsilon),
\end{align*}
where $\varepsilon>0$ is a sufficiently small constant. For the sake of convenience, we further simplify the notations as follows:
\begin{align*}
D_{1}:=D_{1}^{\varepsilon},\quad\Omega:=D\setminus\overline{D}_{1},\quad\mathrm{and}\quad\Omega^{\ast}:=D\setminus\overline{D_{1}^{\ast}}.
\end{align*}

Suppose that $\Omega$ and $D_{1}$ are occupied, respectively, by two different isotropic and homogeneous elastic materials with different Lam$\mathrm{\acute{e}}$ constants $(\lambda,\mu)$ and $(\lambda_{1},\mu_{1})$. The elasticity tensors $\mathbb{C}^0$ and $\mathbb{C}^1$ for the background and the inclusion are expressed, respectively, as
$$C_{ijkl}^0=\lambda\delta_{ij}\delta_{kl} +\mu(\delta_{ik}\delta_{jl}+\delta_{il}\delta_{jk}),$$
and
$$C_{ijkl}^1=\lambda_1\delta_{ij}\delta_{kl} +\mu_1(\delta_{ik}\delta_{jl}+\delta_{il}\delta_{jk}),$$
where $i, j, k, l=1,2,...,d$ and $\delta_{ij}$ represents the kronecker symbol: $\delta_{ij}=0$ for $i\neq j$, $\delta_{ij}=1$ for $i=j$. Let $u=(u^{1},u^{2},...,u^{d})^{T}:D\rightarrow\mathbb{R}^{d}$ represent the elastic displacement field. Denote by $\chi_{\Omega}$ the characteristic function of $\Omega\subset\mathbb{R}^{d}$. For a given boundary data $\varphi=(\varphi^{1},\varphi^{2},...,\varphi^{d})^{T}$, we consider the following Dirichlet problem for the Lam$\mathrm{\acute{e}}$ system with piecewise constant coefficients
\begin{align}\label{La.001}
\begin{cases}
\nabla\cdot \left((\chi_{\Omega}\mathbb{C}^0+\chi_{D_{1}}\mathbb{C}^1)e(u)\right)=0,&\hbox{in}~~D,\\
u=\varphi, &\hbox{on}~~\partial{D},
\end{cases}
\end{align}
where $e(u)=\frac{1}{2}\left(\nabla u+(\nabla u)^{T}\right)$ is the strain tensor. Under the hypothesis of the standard ellipticity condition for problem (\ref{La.001}), that is,
\begin{align*}
\mu>0,\quad d\lambda+2\mu>0,\quad \mu_1>0,\quad d\lambda_1+2\mu_1>0,
\end{align*}
it is well known that there is a unique solution $u\in H^{1}(D;\mathbb{R}^{d})$ to problem (\ref{La.001}) for $\varphi\in H^{1}(D;\mathbb{R}^{d})$. Moreover, $\nabla u$ was proved to be piecewise H\"older continuous in \cite{LN2003}.

Denote by
$$\Psi:=\{\psi\in C^1(\mathbb{R}^{d}; \mathbb{R}^{d})\ |\ \nabla\psi+(\nabla\psi)^T=0\}$$
the linear space of rigid displacement in $\mathbb{R}^{d}$. Let $\{e_{1},...,e_{d}\}$ be the standard basis of $\mathbb{R}^{d}$. It is known that
\begin{align*}
\left\{\;e_{i},\;x_{k}e_{j}-x_{j}e_{k}\;\big|\;1\leq\,i\leq\,d,\;1\leq\,j<k\leq\,d\;\right\}
\end{align*}
is a basis of $\Psi$. Rewrite this basis as $\left\{\psi_{i}\big|\,i=1,2,...,\frac{d(d+1)}{2}\right\}$. For example, in dimension two
$$\psi_{1}=\begin{pmatrix}1\\0\end{pmatrix},\quad\psi_{2}=\begin{pmatrix}0\\1\end{pmatrix},\quad\psi_{3}=\begin{pmatrix}x_{2}\\-x_{1}\end{pmatrix}.$$
For fixed $\lambda$ and $\mu$, let $u_{\lambda_{1},\mu_{1}}$ be the solution of (\ref{La.001}). It has been proved in the appendix of \cite{BLL2015} that
\begin{align*}
u_{\lambda_1,\mu_1}\rightarrow u\quad\hbox{in}\ H^1(D; \mathbb{R}^{d}),\quad \hbox{as}\ \min\{\mu_1, d\lambda_1+2\mu_1\}\rightarrow\infty,
\end{align*}
with $u\in H^1(D; \mathbb{R}^{d})$ satisfying
\begin{align}\label{La.002}
\begin{cases}
\mathcal{L}_{\lambda, \mu}u:=\nabla\cdot(\mathbb{C}^0e(u))=0,\quad&\hbox{in}\ \Omega,\\
u=C^{\alpha}\psi_{\alpha},&\hbox{on}\ \partial{D}_{1},\\
\int_{\partial{D}_{1}}\frac{\partial u}{\partial \nu_0}\Big|_{+}\cdot\psi_{\alpha}=0,&\alpha=1,2,...,\frac{d(d+1)}{2},\\
u=\varphi,&\hbox{on}\ \partial{D},
\end{cases}
\end{align}
where the free constants $C^{\alpha}$, $\alpha=1,2,...,\frac{d(d+1)}{2}$ will be determined later by the third line and
\begin{align*}
\frac{\partial u}{\partial \nu_0}\Big|_{+}&:=(\mathbb{C}^0e(u))\nu=\lambda(\nabla\cdot u)\nu+\mu(\nabla u+(\nabla u)^\mathrm{T})\nu,
\end{align*}
and $\nu$ denotes the unit outer normal of $\partial D_{1}$. Here and below, we let the subscript $\pm$ represent the limit from outside and inside the domain, respectively. We would like to remark that there has established the existence, uniqueness and regularity of weak solutions to (\ref{La.002}) in \cite{BLL2015}. Moreover, the $H^{1}$-regularity of solution $u$ to problem (\ref{La.002}) is improved to that of $C^1(\overline{\Omega};\mathbb{R}^{d})\cap C^1(\overline{D}_{1};\mathbb{R}^{d})$.

Suppose further that there exists a positive constant $R$, independent of $\varepsilon$, such that the top and bottom boundaries of the thin gap between $D_{1}$ and $D$ can be formulated, respectively, by
\begin{align*}
x_{d}=\varepsilon+h_{1}(x')\quad\mathrm{and}\quad x_{d}=h(x'),\quad\quad x'\in B_{2R}',
\end{align*}
where $h_{1}$ and $h$ verify that for $\beta>0$,
\begin{enumerate}
{\it\item[(\bf{S1})]
$h_{1}(x')-h(x')=\tau|x'|^{1+\alpha}+O(|x'|^{1+\alpha+\beta}),\;\mbox{if}\;\,x'\in B'_{2R},$
\item[(\bf{S2})]
$|\nabla_{x'}h_{1}(x')|,\,|\nabla_{x'}h(x')|\leq \kappa_{1}|x'|^{\alpha},\;\mbox{if}\;\,x'\in B_{2R}',$
\item[(\bf{S3})]
$\|h_{1}\|_{C^{1,\alpha}(B'_{2R})}+\|h\|_{C^{1,\alpha}(B'_{2R})}\leq \kappa_{2},$}
\end{enumerate}
where $\tau,\kappa_{1}$ and $\kappa_{2}$ are all positive constants independent of $\varepsilon$. We additionally assume that for $i=1,...,d-1,$ $h_{1}(x')-h(x')$ is an even function of each $x_{i}$ in $B_{R}'$. We here would like to remark that condition ({\bf{S1}}) allows $\partial D_{1}$ and $\partial D$ to have different convexity, such as $h_{1}(x')=\tau_{1}|x'|^{1+\alpha}$ and $h(x')=\tau_{2}|x'|^{1+\alpha+\beta}$ in $B'_{2R}$ with two positive constants $\tau_{1}$ and $\tau_{2}$ independent of $\varepsilon$.

For $z'\in B'_{R}$ and $0<t\leq2R$, denote
\begin{align*}
\Omega_{t}(z'):=&\left\{x\in \mathbb{R}^{d}~\big|~h(x')<x_{d}<\varepsilon+h_{1}(x'),~|x'-z'|<{t}\right\}.
\end{align*}
For simplicity, we employ the abbreviated notation $\Omega_{t}$ to represent the thin gap $\Omega_{t}(0')$. For $(x',x_{d})\in\Omega_{2R}$, write
\begin{align}\label{KLO011}
\delta(x'):=\varepsilon+h_{1}(x')-h(x').
\end{align}
Introduce a scalar auxiliary function $\bar{v}\in C^{1,\alpha}(\mathbb{R}^{d})$ satisfying $\bar{v}=1$ on $\partial D_{1}$, $\bar{v}=0$ on $\partial D$,
\begin{align*}
\bar{v}(x',x_{d}):=\frac{x_{d}-h(x')}{\delta(x')},\;\,\mathrm{in}\;\Omega_{2R},\quad\mbox{and}~\|\bar{v}\|_{C^{1,\alpha}(\Omega\setminus\Omega_{R})}\leq C.
\end{align*}
For $i=1,2,...,\frac{d(d+1)}{2}$, define
\begin{align}\label{GWQ}
\bar{u}_{i}(x',x_{d}):=\psi_{i}\bar{v},\quad\bar{u}_{0}(x',x_{d}):=\varphi(x',h(x'))(1-\bar{v}).
\end{align}

\subsection{Main results}
Before listing our main results, we first give some notations. Set
\begin{align*}
\Gamma_{\alpha}:=&\Gamma\left(\frac{1}{1+\alpha}\right)\Gamma\left(\frac{\alpha}{1+\alpha}\right),
\end{align*}
where $\Gamma(s)=\int^{+\infty}_{0}t^{s-1}e^{-t}dt$, $s>0$ is the Gamma function. Introduce a definite
constant as follows:
\begin{align}\label{zwzh001}
\mathcal{M}_{\alpha,\tau}=\frac{2\Gamma_{\alpha}}{(1+\alpha)\tau^{\frac{1}{1+\alpha}}},
\end{align}
where $\tau$ is defined in condition ({\bf{S1}}). Denote
\begin{align}\label{AZ}
(\mathcal{L}_{d}^{1},...,\mathcal{L}_{d}^{d-1},\mathcal{L}_{d}^{d})=(\mu,...,\mu,\lambda+2\mu).
\end{align}
Furthermore, we suppose that for some $\kappa_{3}>0$,
\begin{align}\label{LAPXN001}
\kappa_{3}\leq\mu,d\lambda+2\mu\leq\frac{1}{\kappa_{3}}.
\end{align}

In this paper, we suppose that $\varphi\in C^{1,\alpha}(\partial D;\mathbb{R}^{d})$. Without loss of generality, let $\varphi(0)=0$. Otherwise, we substitute $u-\varphi(0)$ for $u$. The key to establish the asymptotic formula of the stress $\nabla u$ lies in extracting the blow-up factors independent of $\varepsilon$. For that purpose, we focus on the case when the distance between the inclusion $D_{1}$ and the external boundary $\partial{D}$ becomes zero. Introduce a family of bounded linear functionals in relation to $\varphi$,
\begin{equation}\label{BF001}
Q_{j}^{\ast}[\varphi]:=\int_{\partial D_{1}^{\ast}}\frac{\partial u_{0}^{\ast}}{\partial\nu_{0}}\Big|_{+}\cdot\psi_{j},\quad\quad j=1,2,...,\frac{d(d+1)}{2},
\end{equation}
where $u_{0}^{\ast}\in C^{1}(\overline{\Omega^{\ast}};\mathbb{R}^{d})\cap C^{2}(\Omega^{\ast};\mathbb{R}^{d})$ verifies
\begin{align*}
\begin{cases}
\mathcal{L}_{\lambda,\mu}u_{0}^{\ast}=0,\quad\quad\;\,&\mathrm{in}\;\,\Omega^{\ast}:=D\setminus\overline{D_{1}^{\ast}},\\
u_{0}^{\ast}=0,\quad\quad\;\,&\mathrm{on}\;\,\partial D_{1}^{\ast}\setminus\{0\},\\
u_{0}^{\ast}=\varphi(x),\quad\;\,&\mathrm{on}\;\,\partial D.
\end{cases}
\end{align*}
For $i,j=1,2,...,\frac{d(d+1)}{2}$, define
\begin{align}\label{KRD0}
a_{ij}^{\ast}=\int_{\Omega^{\ast}}(\mathbb{C}^{0}e(u_{i}^{\ast}),e(u_{j}^{\ast})),
\end{align}
where, for $i=1,2,...,\frac{d(d+1)}{2}$, $u_{i}^{\ast}$ solves
\begin{equation}\label{l03.001}
\begin{cases}
\mathcal{L}_{\lambda,\mu}u_{i}^{\ast}=0,\quad\;\,&\mathrm{in}\;\,\Omega^{\ast},\\
u_{i}^{\ast}=\psi_{i},\quad\;\,&\mathrm{on}\;\,\partial D_{1}^{\ast}\setminus\{0\},\\
u_{i}^{\ast}=0,\quad\;\,&\mathrm{on}\;\,\partial D.
\end{cases}
\end{equation}
Note that the definition of $a_{ij}^{\ast}$ in \eqref{KRD0} is valid for any $i,j=1,2,...,\frac{d(d+1)}{2}$ in three dimensions but only valid under some cases in two dimensions, see Lemma \ref{lemmabc} below. In dimension two, define
\begin{align}\label{BF002}
\mathbb{B}_{i}^{\ast}[\varphi]=\begin{pmatrix} Q_{i}^{\ast}[\varphi]&a^{\ast}_{i3}\\  Q_{3}^{\ast}[\varphi]&a_{33}^{\ast}\\  \end{pmatrix},\quad i=1,2.
\end{align}
For the order of the rest term, define
\begin{align}\label{ZWZHAO01}
\varepsilon(\alpha,\beta):=&
\begin{cases}
\varepsilon^{\min\{\frac{\beta}{1+\alpha},\frac{(1-\alpha)\alpha}{2(1+2\alpha)},\frac{\alpha^{2}}{2(1+2\alpha)(1+\alpha)^{2}}\}},&\alpha>\beta,\\
\varepsilon^{\min\{\frac{(1-\alpha)\alpha}{2(1+2\alpha)},\frac{\alpha^{2}}{2(1+2\alpha)(1+\alpha)^{2}}\}},&0<\alpha\leq\beta.
\end{cases}
\end{align}

Unless otherwise stated, in what following $C$ denotes a constant, whose values may vary from line to line, depending only on $\tau,\kappa_{1},\kappa_{2},\kappa_{3},R$ and an upper bound of the $C^{1,\alpha}$ norms of $\partial D_{1}$ and $\partial D$, but not on $\varepsilon$. $O(1)$ denotes some quantity satisfying  $|O(1)|\leq\,C$. Observe that by using the standard elliptic theory (see \cite{ADN1959,ADN1964}), we know
\begin{align*}
\|\nabla u\|_{L^{\infty}(\Omega\setminus\Omega_{R})}\leq\,C\|\varphi\|_{C^{1,\alpha}(\partial D)}.
\end{align*}
Based on this fact, it is sufficient to study the asymptotic behavior of $\nabla u$ in the thin gap $\Omega_{R}$. The first principal result is listed as follows.

\begin{theorem}\label{Lthm066}
Assume that $D_{1}\subset D\subseteq\mathbb{R}^{2}$ are defined as above, conditions $\rm{(}${\bf{S1}}$\rm{)}$--$\rm{(}${\bf{S3}}$\rm{)}$ hold, $Q_{3}^{\ast}[\varphi]\neq0$ and $\det\mathbb{B}_{i}^{\ast}[\varphi]\neq0$, $i=1,2$. For $\varphi\in C^{1,\alpha}(\partial D;\mathbb{R}^{2})$, let $u\in H^{1}(D;\mathbb{R}^{2})\cap C^{1}(\overline{\Omega};\mathbb{R}^{2})$ be the solution of (\ref{La.002}). Then for a sufficiently small $\varepsilon>0$ and $x\in\Omega_{R}$,
\begin{align*}
\nabla u=&\sum^{2}_{i=1}\frac{\det\mathbb{B}_{i}^{\ast}[\varphi]}{a_{33}^{\ast}}\frac{\varepsilon^{\frac{\alpha}{1+\alpha}}(1+O(\varepsilon(\alpha,\beta)))}{\mathcal{L}_{2}^{i}\mathcal{M}_{\alpha,\tau}}\nabla\bar{u}_{i}+\frac{Q_{3}^{\ast}[\varphi]}{a_{33}^{\ast}}(1+O(\varepsilon^{\frac{\alpha}{2(1+2\alpha)}}))\nabla\bar{u}_{3}\notag\\
&+\nabla\bar{u}_{0}+O(1)\delta^{-\frac{1-\alpha}{1+\alpha}}\|\varphi\|_{C^{1}(\partial D)},
\end{align*}
where $\delta$ is defined by \eqref{KLO011}, $\mathcal{M}_{\alpha,\tau}$ is defined in \eqref{zwzh001}, the explicit auxiliary functions $\bar{u}_{i}$, $i=0,1,2,3$ are defined by \eqref{GWQ}, the Lam\'{e} constants $\mathcal{L}_{2}^{i}$, $i=1,2$ are given in \eqref{AZ}, the blow-up factors $Q_{3}^{\ast}[\varphi]$ and $a_{33}^{\ast}$ are defined in \eqref{BF001}--\eqref{KRD0}, $\det\mathbb{B}_{i}^{\ast}[\varphi]$ is the determinant of the blow-up factor matrix $\mathbb{B}_{i}^{\ast}[\varphi]$ defined in \eqref{BF002}, $\varepsilon(\alpha,\beta)$ is defined by \eqref{ZWZHAO01}.
\end{theorem}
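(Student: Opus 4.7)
The plan is first to linearize \eqref{La.002}. Introduce $u_i \in H^1(\Omega;\mathbb{R}^2) \cap C^1(\overline{\Omega};\mathbb{R}^2)$, $i = 1, 2, 3$, each solving $\mathcal{L}_{\lambda,\mu} u_i = 0$ in $\Omega$ with $u_i = \psi_i$ on $\partial D_1$ and $u_i = 0$ on $\partial D$, and let $u_0$ solve the same equation with $u_0 = 0$ on $\partial D_1$ and $u_0 = \varphi$ on $\partial D$. Uniqueness for \eqref{La.002} then gives
\[
u \;=\; \sum_{i=1}^{3} C^i u_i \;+\; u_0, \qquad \nabla u \;=\; \sum_{i=1}^{3} C^i \nabla u_i + \nabla u_0.
\]
For each $i$, I would first bound the energy $\int_\Omega |\nabla(u_i - \bar u_i)|^2$ and then promote it to a pointwise estimate via the iteration over concentric cylinders $\Omega_t(z')$ that was perfected in Bao--Li--Li. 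In the $C^{1,\alpha}$ setting this ends at the sharp remainder
\[
|\nabla u_i(x) - \nabla \bar u_i(x)| \;\leq\; C\, \delta(x')^{-(1-\alpha)/(1+\alpha)}, \qquad x \in \Omega_R,
\]
and similarly for $u_0 - \bar u_0$ (with a $\|\varphi\|_{C^1(\partial D)}$ factor).

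\textbf{Step 2 (linear system for the $C^\alpha$).} Testing the PDE against $u_\alpha$ and using that $u_\alpha$ vanishes on $\partial D$ and equals $\psi_\alpha$ on $\partial D_1$ converts the third line of \eqref{La.002} into the $3 \times 3$ system
\[
\sum_{j=1}^{3} a_{\alpha j}^\varepsilon\, C^j \;=\; b_\alpha^\varepsilon, \quad a_{\alpha j}^\varepsilon := \int_\Omega (\mathbb{C}^0 e(u_\alpha), e(u_j)), \quad b_\alpha^\varepsilon := \int_{\partial D_1} \tfrac{\partial u_0}{\partial \nu_0}\Big|_+ \cdot \psi_\alpha.
\]
I would then compute the entries asymptotically: substituting $u_i \approx \bar u_i$ in $\Omega_R$, using $\delta(x') = \varepsilon + \tau|x'|^{1+\alpha} + O(|x'|^{1+\alpha+\beta})$, and rescaling $x' = \varepsilon^{1/(1+\alpha)} s$ reduces the leading integral to $\int_{\mathbb R}(1+\tau |s|^{1+\alpha})^{-1}\,ds$. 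Recognising this as the Beta integral that defines $\mathcal{M}_{\alpha,\tau}$ gives
\[
a_{ii}^\varepsilon \;=\; \mathcal{L}_2^i\, \mathcal{M}_{\alpha,\tau}\, \varepsilon^{-\alpha/(1+\alpha)}\bigl(1 + O(\varepsilon(\alpha,\beta))\bigr), \qquad i = 1, 2,
\]
the two moduli $\mu$ and $\lambda+2\mu$ being selected by which component of the strain tensor $\bar u_i$ excites. The parity assumption on $h_1 - h$ forces $a_{12}^\varepsilon$ to be of lower order, while $a_{i3}^\varepsilon \to a_{i3}^\ast$, $a_{33}^\varepsilon \to a_{33}^\ast$ and $b_\alpha^\varepsilon \to Q_\alpha^\ast[\varphi]$ follow by comparing $u_i$ and $u_0$ with $u_i^\ast$ and $u_0^\ast$ away from the touching point, the contribution over $\Omega_\rho$ being controlled by the Step 1 estimates.

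\textbf{Step 3 (Cramer inversion and assembly).} The leading behaviour of $\det(a_{\alpha j}^\varepsilon) \approx a_{11}^\varepsilon a_{22}^\varepsilon a_{33}^\ast$ is nondegenerate under the standing hypotheses $Q_3^\ast[\varphi] \neq 0$ and $\det \mathbb{B}_i^\ast[\varphi] \neq 0$, and Cramer's rule on this system produces
\[
C^i = \frac{\det \mathbb{B}_i^\ast[\varphi]}{a_{33}^\ast} \cdot \frac{\varepsilon^{\alpha/(1+\alpha)}}{\mathcal{L}_2^i \mathcal{M}_{\alpha,\tau}}\bigl(1+O(\varepsilon(\alpha,\beta))\bigr),\; i=1,2, \quad C^3 = \frac{Q_3^\ast[\varphi]}{a_{33}^\ast}\bigl(1+O(\varepsilon^{\alpha/(2(1+2\alpha))})\bigr).
\]
Inserting these into $\nabla u = \sum_{i=1}^{3} C^i \nabla u_i + \nabla u_0$ and replacing $\nabla u_i$ by $\nabla \bar u_i$ up to the Step 1 remainder yields the theorem; the remainders aggregate into the single $O(1)\delta^{-(1-\alpha)/(1+\alpha)}\|\varphi\|_{C^1(\partial D)}$ term because each $|C^i|$ is either bounded or of order $\varepsilon^{\alpha/(1+\alpha)}$.

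\textbf{Main obstacle.} The hardest step is the sharp pointwise bound in Step 1: the $C^{1,\alpha}$ regularity makes the source $\mathcal{L}_{\lambda,\mu}\bar u_i$ much rougher than in the $C^{2,\alpha}$ setting of \cite{LZ2020,MZ202102}, so the iteration saturates precisely at the exponent $(1-\alpha)/(1+\alpha)$ and requires keeping $\mu$ and $\lambda+2\mu$ separate in order not to wash out the anisotropic leading factor $\mathcal{L}_2^i$. A second delicate point, entering Step 3, is to certify that the error orders from the diagonal asymptotics, from the off-diagonal convergence $a_{i3}^\varepsilon \to a_{i3}^\ast$, and from $u_0 \to u_0^\ast$ all combine after Cramer inversion into the single exponent $\varepsilon(\alpha,\beta)$ declared in \eqref{ZWZHAO01}.
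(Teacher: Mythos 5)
Your overall strategy matches the paper exactly: decompose $u = u_0 + \sum_{i=1}^{3} C^i u_i$, compare each $u_i$ with the explicit profile $\bar u_i$ via an energy iteration, convert the orthogonality conditions into the $3\times 3$ system $\sum_j a_{\alpha j}C^j = Q_\alpha[\varphi]$, compute the entries $a_{ij}$ and $Q_j[\varphi]$ asymptotically, and invert by Cramer's rule. Steps 2 and 3 are essentially the paper's Sections 5.1--5.2 and the proof of Theorem \ref{OMG123}.

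However, there is a genuine error in the Step 1 estimate that would not survive a careful execution of the iteration. You claim that the pointwise bound saturates at
\[
|\nabla u_i - \nabla\bar u_i| \le C\,\delta(x')^{-(1-\alpha)/(1+\alpha)}
\]
for all $i$, and also for $u_0-\bar u_0$ up to a $\|\varphi\|_{C^1}$ factor. This is not what the energy iteration yields. The iteration in the paper (Theorem \ref{lem89999} and Corollary \ref{thm86}) produces the sharper dichotomy $|\nabla(u_i-\bar u_i)| = O(\delta^{-1/(1+\alpha)})$ for $i=1,2$, $O(1)$ for $i=3$, and $O(1)\|\varphi\|_{C^1}$ for $u_0-\bar u_0$ (the last two using that the Dirichlet data $\psi_3$ resp.\ $\varphi$ vanish at the touching point, so the $\delta^{-1/(1+\alpha)}$ factor is offset by $|\psi_3|\lesssim|x'|\lesssim\delta^{1/(1+\alpha)}$, resp.\ $|\varphi(x',h(x'))|\lesssim|x'|\|\varphi\|_{C^1}$). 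Your $\delta^{-(1-\alpha)/(1+\alpha)}$ is strictly smaller than $\delta^{-1/(1+\alpha)}$ and there is no mechanism in the $C^{1,\alpha}$ iteration to gain the extra $\delta^{\alpha/(1+\alpha)}$ for the constant boundary data $\psi_1,\psi_2$.

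This mislocation also infects your Step 3 aggregation. You attribute the final remainder $O(1)\delta^{-(1-\alpha)/(1+\alpha)}\|\varphi\|_{C^1}$ to the terms with ``bounded $|C^i|$'' (i.e.\ $C^3$), but $C^3\cdot(\nabla u_3-\nabla\bar u_3)$ is actually only $O(\|\varphi\|_{C^1})$. The correct source of the exponent $(1-\alpha)/(1+\alpha)$ is the product for $i=1,2$:
\[
|C^i|\cdot|\nabla(u_i-\bar u_i)| \;\lesssim\; \varepsilon^{\alpha/(1+\alpha)}\|\varphi\|_{C^1}\cdot\delta^{-1/(1+\alpha)} \;\le\; \delta^{-(1-\alpha)/(1+\alpha)}\|\varphi\|_{C^1},
\]
where the last step uses $\delta\ge\varepsilon$. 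So the conclusion you write is correct, but you reach it with a Step 1 estimate that cannot be proved and an aggregation that credits the wrong term; replacing your uniform $\delta^{-(1-\alpha)/(1+\alpha)}$ with the paper's $\delta^{-1/(1+\alpha)}/O(1)$ dichotomy and redoing the bookkeeping is required.
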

\begin{remark}
To begin with, we see from decomposition \eqref{Le2.015} below that $\nabla u$ is divided into three parts as follows: $\sum^{d}_{i=1}C^{i}\nabla\bar{u}_{i}$, $\sum^{\frac{d(d+1)}{2}}_{i=d+1}C^{i}\nabla\bar{u}_{i}$ and $\nabla\bar{u}_{0}$. Note that for $i=1,2,...,\frac{d(d+1)}{2}$, the main singularity of $\nabla\bar{u}_{i}$ lies in $|\psi_{i}|\delta^{-1}$, and the major singularity of $\nabla\bar{u}_{0}$ is determined by $|\partial_{x_{d}}\bar{u}_{0}|\leq\frac{C\|\varphi\|_{C^{1}(\partial D)}|x'|}{\varepsilon+|x'|^{1+\alpha}}$ . Then the asymptotic results in Theorems \ref{Lthm066} and \ref{Lthm06666} indicate that the first part $\sum^{d}_{i=1}C^{i}\nabla\bar{u}_{i}$ blows up, respectively, at the rate of $\varepsilon^{-\frac{1}{1+\alpha}}$ and $\varepsilon^{-1}$ in $(d-1)$-dimensional ball $\{|x'|\leq\varepsilon^{\frac{1}{1+\alpha}}\}\cap\Omega$ in two dimensions and higher dimensions, while the blow-up rates of the latter two parts $\sum^{\frac{d(d+1)}{2}}_{i=d+1}C^{i}\nabla\bar{u}_{i}$ and $\nabla\bar{u}_{0}$ are no greater than $\varepsilon^{-\frac{\alpha}{1+\alpha}}$ on the cylinder surface $\{|x'|=\varepsilon^{\frac{1}{1+\alpha}}\}\cap\Omega$. Consequently, the maximal singularity of $\nabla u$ lies on the fist part and its blow-up rate is $\varepsilon^{-\frac{1}{1+\alpha}}$ if $d=2$ and $\varepsilon^{-1}$ if $d\geq3$, respectively.
\end{remark}
\begin{remark}
For every $j=1,2,...,\frac{d(d+1)}{2}$, the blow-up factor $Q_{j}[\varphi]$ defined in \eqref{AKDM001} below remains bounded for any given boundary data $\varphi$ and converges to $Q_{j}^{\ast}[\varphi]$ defined by \eqref{BF001} as the distance $\varepsilon$ tends to zero. This is different from the results in \cite{LZ2020} for $C^{2,\alpha}$-inclusions close to the matrix boundary, where Li and Zhao \cite{LZ2020} found that the blow-up factor $Q_{j}[\varphi]$ will possess the blow-up rates for some special boundary data classified according to the parity and then strengthen the singularities of the stress. 
\end{remark}

In dimensions greater than or equal to three, we write
\begin{gather}\label{matrix01}
\mathbb{A}^{\ast}=
\begin{pmatrix} a_{11}^{\ast}&a_{12}^{\ast}&\cdots&a_{1\frac{d(d+1)}{2}}^{\ast} \\ a_{21}^{\ast}&a_{22}^{\ast}&\cdots&a_{2\frac{d(d+1)}{2}}^{\ast} \\ \vdots&\vdots&\ddots&\vdots\\a_{\frac{d(d+1)}{2}1}^{\ast}&a_{\frac{d(d+1)}{2}2}^{\ast}&\cdots&a_{\frac{d(d+1)}{2}\frac{d(d+1)}{2}}^{\ast}
\end{pmatrix}.
\end{gather}
For $i=1,2,...,\frac{d(d+1)}{2}$, after replacing the elements of $i$-th column in the matrix $\mathbb{A}^{\ast}$ by column vector $(Q_{1}^{\ast}[\varphi],Q_{2}^{\ast}[\varphi],...,Q^{\ast}_{\frac{d(d+1)}{2}}[\varphi])^{T}$, we get the new matrix $\mathbb{F}_{i}^{\ast}[\varphi]$ as follows:
\begin{gather}\label{LATP001}
\mathbb{F}_{i}^{\ast}[\varphi]=
\begin{pmatrix}
a_{11}^{\ast}&\cdots&Q_{1}^{\ast}[\varphi]&\cdots&a_{1\frac{d(d+1)}{2}}^{\ast} \\\\ \vdots&\ddots&\vdots&\ddots&\vdots\\\\a_{\frac{d(d+1)}{2}1}^{\ast}&\cdots&Q^{\ast}_{\frac{d(d+1)}{2}}[\varphi]&\cdots&a^{\ast}_{\frac{d(d+1)}{2}\frac{d(d+1)}{2}}
\end{pmatrix}.
\end{gather}
Denote
\begin{align}\label{NZKL001}
\bar{\varepsilon}(\alpha,d)=
\begin{cases}
\varepsilon^{\frac{\alpha^{2}(1-\alpha)}{2(1+2\alpha)(1+\alpha)^{2}}},&d=3,\\
\varepsilon^{\frac{\alpha^{2}}{2(1+2\alpha)(1+\alpha)^{2}}\min\{1+\alpha,2-\alpha\}},&d=4,\\
\varepsilon^{\frac{\alpha^{2}}{2(1+2\alpha)(1+\alpha)}},&d\geq5.
\end{cases}
\end{align}

Then we obtain the second principal result as follows.
\begin{theorem}\label{Lthm06666}
Assume that $D_{1}\subset D\subseteq\mathbb{R}^{d}\,(d\geq3)$ are defined as above, conditions $\rm{(}${\bf{S1}}$\rm{)}$--$\rm{(}${\bf{S3}}$\rm{)}$ hold, and $\det\mathbb{F}_{i}^{\ast}[\varphi]\neq0$, $i=1,2,...,\frac{d(d+1)}{2}$. For $\varphi\in C^{1,\alpha}(\partial D;\mathbb{R}^{d})$, let $u\in H^{1}(D;\mathbb{R}^{d})\cap C^{1}(\overline{\Omega};\mathbb{R}^{d})$ be the solution of (\ref{La.002}). Then for a sufficiently small $\varepsilon>0$ and $x\in\Omega_{R}$,
\begin{align*}
\nabla u=&\sum_{i=1}^{\frac{d(d+1)}{2}}\frac{\det\mathbb{F}_{i}^{\ast}[\varphi]}{\det\mathbb{A}^{\ast}}(1+O(\bar{\varepsilon}(\alpha,d)))\nabla\bar{u}_{i}+\nabla\bar{u}_{0}+O(1)\delta^{-\frac{1}{1+\alpha}}\|\varphi\|_{C^{1}(\partial D)},
\end{align*}
where $\delta$ is defined by \eqref{KLO011}, the explicit auxiliary functions $\bar{u}_{i}$, $i=0,1,...,\frac{d(d+1)}{2}$ are defined in \eqref{GWQ}, $\det\mathbb{A}^{\ast}$ and $\det\mathbb{F}_{i}^{\ast}[\varphi]$ are, respectively, the determinants of the blow-up factor matrices $\mathbb{A}^{\ast}$ and $\mathbb{F}_{i}^{\ast}[\varphi]$ defined in \eqref{matrix01}--\eqref{LATP001}, and $\bar{\varepsilon}(\alpha,d)$ is defined by \eqref{NZKL001}.
\end{theorem}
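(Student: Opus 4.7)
The plan is to exploit the linear decomposition $u=\sum_{i=1}^{d(d+1)/2}C^{i}u_{i}+u_{0}$ alluded to in \eqref{Le2.015}, where each $u_{i}$, $i=1,\dots,\frac{d(d+1)}{2}$, solves the Lamé system in $\Omega$ with boundary value $\psi_{i}$ on $\partial D_{1}$ and $0$ on $\partial D$, while $u_{0}$ takes boundary value $0$ on $\partial D_{1}$ and $\varphi$ on $\partial D$. Substituting this decomposition into the integral balance (the third line of \eqref{La.002}) produces the linear system
\begin{equation*}
\sum_{i=1}^{d(d+1)/2}a_{ij}\,C^{i}=Q_{j}[\varphi],\qquad j=1,2,\dots,\tfrac{d(d+1)}{2},
\end{equation*}
with $a_{ij}=\int_{\Omega}(\mathbb{C}^{0}e(u_{i}),e(u_{j}))$ (after an integration by parts identifying the flux with the bilinear form) and $Q_{j}[\varphi]$ the analogue of \eqref{BF001} in the $\varepsilon$-domain. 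Since $\det\mathbb{A}^{\ast}\neq0$ (which follows from coercivity and the hypothesis $\det\mathbb{F}_{i}^{\ast}[\varphi]\neq0$), Cramer's rule then yields $C^{i}=\det\mathbb{F}_{i}/\det\mathbb{A}$ for the $\varepsilon$-versions of those matrices, reducing everything to asymptotic analysis of the entries.

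Next I would quantify the convergence $a_{ij}\to a_{ij}^{\ast}$ and $Q_{j}[\varphi]\to Q_{j}^{\ast}[\varphi]$ as $\varepsilon\to 0$. The idea is to split each integral into an outer piece over $\Omega\setminus\Omega_{R}$, where $u_{i}$ and $u_{i}^{\ast}$ are uniformly $C^{1,\alpha}$-close by standard elliptic theory, and an inner piece over the narrow gap $\Omega_{R}$, which one estimates using the pointwise gradient bounds on $u_{i}-\bar{u}_{i}$ together with the geometric weight $\delta(x')\simeq\varepsilon+|x'|^{1+\alpha}$. A direct computation of $\int_{\Omega_{R}}\delta^{-2}|\psi_{i}|^{2}\,dx$ shows three different dimensional regimes: for $d=3$ the tangential integral $\int_{|x'|<R}\delta^{-1}|x'|^{2}\,dx'$ produces a logarithmic singularity, for $d=4$ a borderline power, and for $d\geq 5$ a genuine negative power of $\varepsilon$. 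These are precisely the sources of the three cases in the definition \eqref{NZKL001} of $\bar{\varepsilon}(\alpha,d)$; the detailed powers come out by balancing the remainder bounds from the asymptotics of $\nabla u_{i}$ listed as item $(a)$ in the introduction against the leading diagonal entry of $\mathbb{A}^{\ast}$.

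With these ingredients, Cramer's rule gives
\begin{equation*}
C^{i}=\frac{\det\mathbb{F}_{i}^{\ast}[\varphi]}{\det\mathbb{A}^{\ast}}\bigl(1+O(\bar{\varepsilon}(\alpha,d))\bigr),
\end{equation*}
provided one controls the cofactor expansion carefully: the diagonal entries $a_{ii}^{\ast}$ for $i=1,\dots,d$ (translation modes) carry the principal contribution, while the rotation-mode entries are bounded. The main part of the statement then follows by plugging these expansions back into the decomposition, using the known estimates for $\nabla(u_{i}-\bar{u}_{i})$ and $\nabla(u_{0}-\bar{u}_{0})$ in $\Omega_{R}$ (again item $(a)$) to absorb everything of lower order into the remainder $O(1)\delta^{-1/(1+\alpha)}\|\varphi\|_{C^{1}(\partial D)}$.

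The main obstacle I anticipate is the bookkeeping of error rates in the cofactor expansion when $d=3$ and $d=4$: there the diagonal entries $a_{ii}^{\ast}$ for translation modes themselves diverge or are borderline, so one cannot simply treat $\mathbb{A}^{\ast}$ as a fixed nonsingular matrix — instead one rescales rows/columns by appropriate powers of $\varepsilon$ and tracks how the convergence $a_{ij}\to a_{ij}^{\ast}$ in relative error interacts with this rescaling. This is what forces the three-case formula \eqref{NZKL001} rather than a single uniform rate. A secondary technical point is establishing the pointwise remainder $O(1)\delta^{-1/(1+\alpha)}\|\varphi\|_{C^{1}(\partial D)}$: this will require the $W^{2,p}$ interior estimates for $u_{i}-\bar{u}_{i}$ on dyadic subregions of $\Omega_{R}$ (Campanato's iteration, in the spirit of \cite{BLL2015}) combined with the explicit form of $\bar{u}_{0}$ in \eqref{GWQ}.
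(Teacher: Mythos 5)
Your overall plan — decompose $u=\sum_i C^i u_i+u_0$, derive the linear system $\sum_i a_{ij}C^i=Q_j[\varphi]$ from the flux condition, solve by Cramer's rule, and then quantify the convergence $a_{ij}\to a_{ij}^\ast$, $Q_j[\varphi]\to Q_j^\ast[\varphi]$ together with $\nabla u_i\to\nabla\bar u_i$ — is exactly the paper's approach (Section 3 decomposition, Lemma \ref{KM323}, Lemma \ref{lemmabc}, Theorem \ref{OMG123}). But your ``main obstacle'' paragraph contains a genuine conceptual error that would derail the proof if pursued.

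You claim that for $d=3,4$ the diagonal entries $a_{ii}^\ast$ for translation modes ``diverge or are borderline,'' that $\mathbb{A}^\ast$ cannot be treated as a fixed nonsingular matrix, and that one must rescale rows/columns by powers of $\varepsilon$. None of this is true for $d\geq3$. The touching domain $\Omega^\ast$ has finite elastic energy in all dimensions $d\geq3$: the integral $\int_{|x'|<R}\frac{dx'}{h_1-h}\sim\int_0^R r^{d-2-\alpha}\,dr$ converges precisely because $d-2-\alpha>-1$, i.e.\ $d\geq3$ and $\alpha<1$. Consequently $a_{ii}^\ast$ is a finite number, $\mathbb{A}^\ast$ is a fixed positive-definite matrix (Lemma \ref{FBC6} passes to the limit), and the Cramer-rule step is \emph{simpler} than you anticipate: one shows $a_{ij}=a_{ij}^\ast+O(\text{small})$ and $Q_j=Q_j^\ast+O(\text{small})$, then $C^i=\det\mathbb{F}_i/\det\mathbb{A}=(\det\mathbb{F}_i^\ast/\det\mathbb{A}^\ast)(1+O(\text{small}))$ by continuity of the determinant. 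The row/column rescaling you describe is the $d=2$ technique (where $a_{ii}\sim\varepsilon^{-\alpha/(1+\alpha)}$), and importing it into the $d\geq3$ argument is the wrong move.

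Relatedly, your explanation of the three-case formula \eqref{NZKL001} is misattributed. You say it arises from a logarithmic singularity of $\int\delta^{-1}|x'|^2\,dx'$ at $d=3$, a borderline power at $d=4$, and a genuine negative power at $d\geq5$. Those integrals actually all converge for $d\geq3$. The three cases instead come from the \emph{remainder} estimates in Lemma \ref{lemmabc}: after cutting off at $|x'|=\varepsilon^{\bar\gamma}$ with $\bar\gamma=\frac{\alpha^2}{2(1+2\alpha)(1+\alpha)^2}$, the approximation error for $a_{ii}$ is controlled by $\varepsilon^{\bar\gamma\min\{1+\alpha,\,d-2\}}$ (from the near-field truncation) competing with $\varepsilon^{(d-2-\alpha)\bar\gamma}$ (from replacing $\delta$ by $h_1-h$ on the truncated annulus). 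For $d=3$ the latter wins and gives the factor $1-\alpha$; for $d=4$ one gets $\min\{1+\alpha,2-\alpha\}$; for $d\geq5$ the former saturates at $1+\alpha$. This is where $\bar\varepsilon(\alpha,d)$ comes from — not from divergence of the leading entries. Until you see that $\mathbb{A}^\ast$ is a fixed matrix, the bookkeeping you describe cannot land on \eqref{NZKL001}.

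A minor point: nonsingularity of $\mathbb{A}^\ast$ follows from the uniform coercivity inherited from Lemma \ref{FBC6}, not from the assumption $\det\mathbb{F}_i^\ast[\varphi]\neq0$ (the latter is a nondegeneracy hypothesis on the data $\varphi$, not on the geometry).
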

\begin{remark}
We claim that $\varphi\not\equiv0$ on $\partial D$ under the condition of $\det\mathbb{F}_{i}^{\ast}[\varphi]\neq0$, $i=1,2,...,\frac{d(d+1)}{2}$. In fact, if $\varphi\equiv0$ on $\partial D$, then it follows from integration by parts that $Q_{j}^{\ast}[\varphi]=\int_{\partial D}\frac{\partial u_{j}^{\ast}}{\partial\nu_{0}}\big|_{+}\cdot\varphi=0$, which contradicts the assumed condition.
\end{remark}

By applying the proofs of Theorems \ref{Lthm066} and \ref{Lthm06666} with a slight modification, we derive the pointwise upper and lower bounds on the gradients for more general $C^{1,\alpha}$-inclusions as follows:
\begin{align}
\tau_{1}|x'|^{1+\alpha}\leq h_{1}(x')-h(x')\leq \tau_{2}|x'|^{1+\alpha},\quad\mathrm{for}\;x'\in B'_{2R},\;\tau_{j}>0,\,j=1,2.\label{AKDN001}
\end{align}
To be specific,
\begin{corollary}\label{HBZ001}
Assume that $D_{1}\subset D\subseteq\mathbb{R}^{d}\,(d\geq2)$ are defined as above, conditions \eqref{AKDN001} and $\rm{(}${\bf{S2}}$\rm{)}$--$\rm{(}${\bf{S3}}$\rm{)}$ hold. For $\varphi\in C^{1,\alpha}(\partial D;\mathbb{R}^{d})$, let $u\in H^{1}(D;\mathbb{R}^{d})\cap C^{1}(\overline{\Omega};\mathbb{R}^{d})$ be the solution of (\ref{La.002}). Then for a sufficiently small $\varepsilon>0$,

$(a)$ if $d=2$, there exist some integer $1\leq i_{0}\leq2$ such that $\det\mathbb{B}_{i_{0}}^{\ast}[\varphi]\neq0$, then for $x\in\{x'=0'\}\cap\Omega$,
\begin{align*}
\frac{|\det\mathbb{B}_{i_{0}}^{\ast}[\varphi]|}{C\mathcal{L}_{2}^{i_{0}}\tau_{2}^{\frac{1}{1+\alpha}}|a_{33}^{\ast}|}\frac{1}{\varepsilon^{\frac{1}{1+\alpha}}}\leq|\nabla u|\leq \frac{\max\limits_{1\leq i\leq 2}(\mathcal{L}_{2}^{i})^{-1}|\det\mathbb{B}_{i}^{\ast}[\varphi]|}{\tau_{1}^{\frac{1}{1+\alpha}}|a_{33}^{\ast}|}\frac{C}{\varepsilon^{\frac{1}{1+\alpha}}};
\end{align*}

$(b)$ if $d\geq3$, there exist some integer $1\leq i_{0}\leq d$ such that $\det\mathbb{F}_{i_{0}}^{\ast}[\varphi]\neq0$, then for  $x\in\{x'=0'\}\cap\Omega$,
\begin{align*}
\frac{|\det\mathbb{F}_{i_{0}}^{\ast}[\varphi]|}{|\det \mathbb{A}^{\ast}|}\frac{1}{C\varepsilon}\leq|\nabla u|\leq \frac{\max\limits_{1\leq i\leq d}|\det\mathbb{F}_{i}^{\ast}[\varphi]|}{|\det \mathbb{A}^{\ast}|}\frac{C}{\varepsilon},
\end{align*}
where the blow-up factor $a_{33}^{\ast}$ is defined in \eqref{KRD0} in the case of $d=2$, the blow-up factor matrices $\mathbb{B}_{i}^{\ast}[\varphi]$, $i=1,2$, $\mathbb{A}^{\ast}$ and $\mathbb{F}_{i}^{\ast}[\varphi]$, $i=1,2,...,d$ are defined by \eqref{BF002}, \eqref{matrix01} and \eqref{LATP001}, respectively.
\end{corollary}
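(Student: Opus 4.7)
The proof strategy is to re-run the arguments for Theorems \ref{Lthm066} and \ref{Lthm06666}, noting that every quantitative step there depends only on two-sided control of $\delta(x')$ and not on the precise asymptotic shape prescribed by $\mathrm{(}$\textbf{S1}$\mathrm{)}$. Under the weaker hypothesis \eqref{AKDN001} one has $\varepsilon + \tau_1|x'|^{1+\alpha} \leq \delta(x') \leq \varepsilon + \tau_2|x'|^{1+\alpha}$, which is exactly the input needed to estimate the integrals that govern the linear system determining the free constants $C^i$ in the decomposition \eqref{Le2.015}.

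The plan is to write $u = \sum_{i=1}^{d(d+1)/2} C^i u_i + u_0$ via \eqref{Le2.015}, then substitute into the third line of \eqref{La.002} to obtain the linear system $\sum_j a_{ij}^{\varepsilon} C^j = Q_i^{\varepsilon}[\varphi]$, with $a_{ij}^{\varepsilon}$ and $Q_i^{\varepsilon}[\varphi]$ the $\varepsilon$-dependent analogues of \eqref{KRD0} and \eqref{BF001}. A scaling change of variables applied to $\int_{\Omega_R}\delta^{-1}dx$ yields two-sided bounds on the diagonal entries $a_{ii}^{\varepsilon}$ of the form $\mathcal{L}_d^i \varepsilon^{-\alpha/(1+\alpha)}/\tau_j^{1/(1+\alpha)}$ (with $j=1,2$) in $d=2$, and uniform positive constants in $d \geq 3$. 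The off-diagonal entries and the right-hand sides $Q_i^{\varepsilon}[\varphi]$ converge to $a_{ij}^{\ast}$ and $Q_i^{\ast}[\varphi]$ as in the proofs of Theorems \ref{Lthm066}--\ref{Lthm06666}, since those convergence estimates rely only on the $C^{1,\alpha}$-regularity of $\partial D_1$ and $\partial D$. Cramer's rule then delivers two-sided bounds on $|C^i|$ expressed through the minors $\det \mathbb{B}_i^{\ast}[\varphi]$ or $\det \mathbb{F}_i^{\ast}[\varphi]$, with $|a_{33}^{\ast}|$ or $|\det \mathbb{A}^{\ast}|$ in the denominator.

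Evaluating the decomposition at $x = (0', x_d) \in \{x'=0'\} \cap \Omega$, one observes from \eqref{GWQ} that $\nabla \bar{u}_i(0', x_d) = e_i \otimes e_d / \varepsilon$ for the translational modes ($i = 1, \dots, d$), so $\big|\sum_{i=1}^d C^i \nabla \bar{u}_i(0', x_d)\big| = \varepsilon^{-1}\sqrt{\sum_{i=1}^d (C^i)^2} \geq |C^{i_0}|/\varepsilon$. For the rotational modes $\psi_i$ with $i>d$ one has $|\psi_i(0', x_d)| \leq C x_d \leq C\varepsilon$ in the thin gap, so their contribution to $\nabla u$ is $O(1)$; likewise $\nabla\bar{u}_0 = O(1)$ at $x'=0'$, and the residual $O(1)\delta^{-(1-\alpha)/(1+\alpha)}\|\varphi\|_{C^1(\partial D)}$ (resp.\ $O(1)\delta^{-1/(1+\alpha)}\|\varphi\|_{C^1(\partial D)}$ for $d \geq 3$) is of strictly smaller order than the translational main term at $x'=0'$. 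Combining these observations with the two-sided bounds on $|C^{i_0}|$ yields the claimed pointwise bounds.

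The main obstacle is twofold. First, one must verify that the energy estimates and iteration schemes used in proving Theorems \ref{Lthm066}--\ref{Lthm06666} depend only on two-sided control of $\delta$ and not on the sharp form $\mathrm{(}$\textbf{S1}$\mathrm{)}$; this amounts to checking that every integral of the form $\int \delta^{-\gamma} dx$ appearing in those arguments scales appropriately under \eqref{AKDN001}. Second, the pointwise lower bound requires isolating at least one coefficient $|C^{i_0}|$ of dominant order. Since the translational $\psi_1, \dots, \psi_d$ are linearly independent, no cancellation among them can occur at $(0', x_d)$, so a single non-zero coefficient suffices, and this is precisely what the non-degeneracy hypothesis $\det \mathbb{B}_{i_0}^{\ast}[\varphi] \neq 0$ (resp.\ $\det \mathbb{F}_{i_0}^{\ast}[\varphi] \neq 0$) delivers via Cramer's rule.
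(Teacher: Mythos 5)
Your plan is correct and is exactly the ``slight modification'' of the proofs of Theorems \ref{Lthm066} and \ref{Lthm06666} that the paper alludes to. Under \eqref{AKDN001} one has $\varepsilon+\tau_{1}|x'|^{1+\alpha}\le\delta(x')\le\varepsilon+\tau_{2}|x'|^{1+\alpha}$, which brackets the diagonal energies $a_{ii}$; the convergences $a_{ij}\to a_{ij}^{\ast}$ and $Q_{j}\to Q_{j}^{\ast}$ for the remaining indices rely only on the $C^{1,\alpha}$-regularity (via Corollary \ref{thm86}) and go through unchanged; Cramer's rule then gives two-sided control of $C^{i}$. At $x'=0'$, since $h_{1}(0')=h(0')$ and $\nabla_{x'}h_{1}(0')=\nabla_{x'}h(0')=0$, one indeed has $\nabla\bar{u}_{i}(0',x_{d})=e_{i}\otimes e_{d}/\varepsilon$ for $1\le i\le d$; these matrices are pairwise orthogonal, so no cancellation among the translational modes can occur, while the rotational modes, $\nabla\bar{u}_{0}$, and the residual are of strictly smaller order. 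All of this is right.

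One computation to pin down carefully is the direction of the $\tau$-dependence. From $\delta\ge\varepsilon+\tau_{1}|x'|^{1+\alpha}$ one gets $\int_{\Omega_{R}}\delta^{-1}\le C\,\tau_{1}^{-1/(1+\alpha)}\varepsilon^{-\alpha/(1+\alpha)}$, hence $a_{i_{0}i_{0}}\le C\mathcal{L}_{2}^{i_{0}}\tau_{1}^{-1/(1+\alpha)}\varepsilon^{-\alpha/(1+\alpha)}$ and
\begin{align*}
|C^{i_{0}}|\;\ge\; \frac{|\det\mathbb{B}_{i_{0}}^{\ast}[\varphi]|}{C\,\mathcal{L}_{2}^{i_{0}}|a_{33}^{\ast}|}\,\tau_{1}^{1/(1+\alpha)}\,\varepsilon^{\alpha/(1+\alpha)},
\end{align*}
so the lower bound on $|\nabla u(0',x_{d})|\approx|C^{i_{0}}|/\varepsilon$ carries a factor $\tau_{1}^{1/(1+\alpha)}$; symmetrically the upper bound carries $\tau_{2}^{1/(1+\alpha)}$. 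This is consistent with the $\tau_{1}=\tau_{2}=\tau$ reduction to Theorem \ref{Lthm066}, where the leading coefficient is proportional to $1/\mathcal{M}_{\alpha,\tau}\propto\tau^{1/(1+\alpha)}$, and with the physical picture that a narrower gap (smaller $\tau$) spreads the field and weakens the concentration at $x'=0'$. The displayed Corollary has $\tau_{2}^{-1/(1+\alpha)}$ and $\tau_{1}^{-1/(1+\alpha)}$ in those positions; be sure to re-derive rather than transcribe those exponents when you write out the final bounds.
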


\begin{remark}
The optimal upper and lower bounds on the gradient obtained in Corollary \ref{HBZ001} not only show the explicit dependence on the curvature parameters $\tau_{i}$, $i=1,2$ and the Lam\'{e} constants $\mathcal{L}_{2}^{i}$, $i=1,2$, but also on the explicit blow-up factor matrices. These information are not presented in the previous work \cite{BJL2017,LZ2020,CL2019}.

\end{remark}

\begin{figure}[htb]
\center{\includegraphics[width=0.45\textwidth]{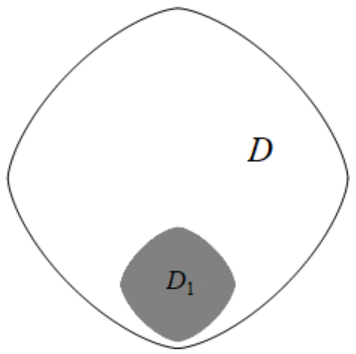}}
\caption{Curvilinear squares with rounded-off angles, $\alpha=\frac{1}{2}$}
\end{figure}

Finally, we consider a special core-shell geometry in dimension two, where the core and the surrounding shell are curvilinear squares with rounded-off angles, see Figure 1. Assume that there exist two positive constants $0<r_{1}<r_{2}$, independent of $\varepsilon$, such that the interfacial boundary of the inclusion $D_{1}$ and the external boundary $\partial D$ can be formulated as
\begin{align}\label{ZZW986}
|x_{1}|^{1+\alpha}+|x_{2}-\varepsilon-r_{1}|^{1+\alpha}=&r_{1}^{1+\alpha}\quad\mathrm{and}\quad|x_{1}|^{1+\alpha}+|x_{2}-r_{2}|^{1+\alpha}=r_{2}^{1+\alpha},
\end{align}
respectively. Denote
\begin{align}\label{AKLGJ001}
\tau_{0}=\frac{1}{1+\alpha}\left(\frac{1}{r_{1}^{\alpha}}-\frac{1}{r_{2}^{\alpha}}\right).
\end{align}

Then, we have
\begin{example}\label{CORO001}
Assume as above, condition \eqref{ZZW986} holds, $Q_{3}^{\ast}[\varphi]\neq0$ and $\det\mathbb{B}_{i}^{\ast}[\varphi]\neq0$, $i=1,2$. For $\varphi\in C^{1,\alpha}(\partial D;\mathbb{R}^{2})$, let $u\in H^{1}(D;\mathbb{R}^{2})\cap C^{1}(\overline{\Omega};\mathbb{R}^{2})$ be the solution of (\ref{La.002}). Then for a sufficiently small $\varepsilon>0$ and $x\in\Omega_{r_{0}}$, $0<r_{0}<\frac{1}{2}\min\{r_{1},r_{2}\}$ is a small constant independent of $\varepsilon$,
\begin{align}\label{LGT315LP}
\nabla u=&\sum^{2}_{i=1}\frac{\det\mathbb{B}_{i}^{\ast}[\varphi]}{a_{33}^{\ast}}\frac{\varepsilon^{\frac{\alpha}{1+\alpha}}}{\mathcal{L}_{2}^{i}\mathcal{M}_{\alpha,\tau_{0}}}\frac{1+O(\varepsilon^{\min\{\frac{\alpha^{2}}{2(1+2\alpha)(1+\alpha)^{2}},\frac{(1-\alpha)\alpha}{2(1+2\alpha)}\}})}{1+\mathcal{G}^{\ast}_{i}\varepsilon^{\frac{\alpha}{1+\alpha}}}\nabla\bar{u}_{i}\notag\\
&+\frac{Q_{3}^{\ast}[\varphi]}{a_{33}^{\ast}}(1+O(\varepsilon^{\frac{\alpha}{2(1+2\alpha)}}))\nabla\bar{u}_{3}+\nabla\bar{u}_{0}+O(1)\delta^{-\frac{1-\alpha}{1+\alpha}}\|\varphi\|_{C^{1}(\partial D)},
\end{align}
where $\delta$ is defined by \eqref{KLO011}, the explicit auxiliary functions $\bar{u}_{i}$, $i=0,1,2,3$ are defined by \eqref{GWQ}, $\mathcal{M}_{\alpha,\tau_{0}}$ is defined in \eqref{zwzh001} with $\tau=\tau_{0}$, the Lam\'{e} constants $\mathcal{L}_{2}^{i}$, $i=1,2$ are given in \eqref{AZ}, the blow-up factors $Q_{3}^{\ast}[\varphi]$ and $a_{33}^{\ast}$ are defined in \eqref{BF001}--\eqref{KRD0}, the blow-up factor matrices $\mathbb{B}_{i}^{\ast}[\varphi]$, $i=1,2$ are defined in \eqref{BF002}, the geometry constants $\mathcal{G}^{\ast}_{i}$, $i=1,2$ are defined by \eqref{QKLP001} below.

\end{example}

\begin{remark}
The geometry constants $\mathcal{G}^{\ast}_{1}$ and $\mathcal{G}^{\ast}_{2}$ captured in asymptotic expansion \eqref{LGT315LP} show the explicit dependence on the radii $r_{1}$ and $r_{2}$ and these geometry constants are independent of the distance parameter $\varepsilon$. In addition, the blow-up factor matrices $\mathbb{B}_{1}^{\ast}[\varphi]$ and $\mathbb{B}_{2}^{\ast}[\varphi]$ can be numerically calculated and analyzed for any given boundary data $\varphi$.

\end{remark}

\section{Preliminary}\label{prf results}
\subsection{Properties of the tensor $\mathbb{C}^{0}$}
To prove Theorem \ref{Lthm066}, we first make note of some properties of the tensor $\mathbb{C}^{0}$. For the isotropic elastic material, let
\begin{align}\label{coeffi001}
\mathbb{C}^{0}:=(C_{ijkl}^{0})=(\lambda\delta_{ij}\delta_{kl}+\mu(\delta_{ik}\delta_{jl}+\delta_{il}\delta_{jk})),\quad \mu>0,\quad d\lambda+2\mu>0,
\end{align}
where $C_{ijkl}^{0}$ satisfies the following symmetry property:
\begin{align}\label{symm}
C_{ijkl}^{0}=C_{klij}^{0}=C_{klji}^{0},\quad i,j,k,l=1,2,...,d.
\end{align}
For every pair of $d\times d$ matrices $\mathbb{A}=(a_{ij})$ and $\mathbb{B}=(b_{ij})$, write
\begin{align*}
(\mathbb{C}^{0}\mathbb{A})_{ij}=\sum_{k,l=1}^{n}C_{ijkl}^{0}a_{kl},\quad\hbox{and}\quad(\mathbb{A},\mathbb{B})\equiv \mathbb{A}:\mathbb{B}=\sum_{i,j=1}^{d}a_{ij}b_{ij}.
\end{align*}
Therefore,
$$(\mathbb{C}^{0}\mathbb{A},\mathbb{B})=(\mathbb{A}, \mathbb{C}^{0}\mathbb{B}).$$
From (\ref{symm}), we see that $\mathbb{C}^{0}$ verifies the ellipticity condition, that is, for every $d\times d$ real symmetric matrix $\xi=(\xi_{ij})$,
\begin{align}\label{ellip}
\min\{2\mu, d\lambda+2\mu\}|\xi|^2\leq(\mathbb{C}^{0}\xi, \xi)\leq\max\{2\mu, d\lambda+2\mu\}|\xi|^2,
\end{align}
where $|\xi|^2=\sum\limits_{ij}\xi_{ij}^2.$  In particular,
\begin{align*}
\min\{2\mu, d\lambda+2\mu\}|\mathbb{A}+\mathbb{A}^T|^2\leq(\mathbb{C}(\mathbb{A}+\mathbb{A}^T), (\mathbb{A}+\mathbb{A}^T)).
\end{align*}
In addition, we know that for any open set $O$ and $u, v\in C^2(O;\mathbb{R}^{d})$,
\begin{align}\label{Le2.01222}
\int_O(\mathbb{C}^0e(u), e(v))\,dx=-\int_O\left(\mathcal{L}_{\lambda, \mu}u\right)\cdot v+\int_{\partial O}\frac{\partial u}{\partial \nu_0}\Big|_{+}\cdot v.
\end{align}
\subsection{Solution decomposition}
As seen in \cite{LZ2020,BJL2017}, the solution of \eqref{La.002} can be split as follows:
\begin{align*}
u=\sum^{d}_{i=1}C^{i}u_{i}+\sum^{\frac{d(d+1)}{2}}_{i=d+1}C^{i}u_{i}+u_{0},\quad\;\,\mathrm{in}\;\Omega,
\end{align*}
where the free constants $C^{i},\,i=1,2,...,\frac{d(d+1)}{2}$ will be determined later by making use of the forth line in (\ref{La.002}), $u_{i}\in C^{1}(\overline{\Omega};\mathbb{R}^{d})\cap C^{2}(\Omega;\mathbb{R}^{d}),\,i=0,1,...,\frac{d(d+1)}{2}$, verify
\begin{equation}\label{P2.005}
\begin{cases}
\mathcal{L}_{\lambda,\mu}u_{0}=0,&\mathrm{in}\;\Omega,\\
u_{0}=0,&\mathrm{on}\;\partial D_{1},\\
u_{0}=\varphi(x),&\mathrm{on}\;\partial D,
\end{cases}\quad
\begin{cases}
\mathcal{L}_{\lambda,\mu}u_{i}=0,&\mathrm{in}\;\Omega,\\
u_{i}=\psi_{i},&\mathrm{on}\;\partial D_{1},\\
u_{i}=0,&\mathrm{on}\;\partial D,
\end{cases}
\end{equation}
respectively. Consequently,
\begin{align}\label{Le2.015}
\nabla u=\sum^{d}_{i=1}C^{i}\nabla u_{i}+\sum^{\frac{d(d+1)}{2}}_{i=d+1}C^{i}\nabla u_{i}+\nabla u_{0},\quad\;\,\mathrm{in}\;\,\Omega,
\end{align}
which indicates that the asymptotics of $\nabla u$ consist of the following two aspects of expansions:
\begin{itemize}
\item [$(i)$] asymptotics of $\nabla u_{i}$, $i=0,1,...,\frac{d(d+1)}{2}$;

\item[$(ii)$] asymptotics of $C^{i}$, $i=1,2,...,\frac{d(d+1)}{2}$.
\end{itemize}

\section{The proofs of Theorems \ref{Lthm066} and \ref{Lthm06666}}\label{pth results}
To begin with, we show that $\nabla\bar{u}_{0}$ is the leading term of $\nabla u_{0}$, where $\bar{u}_{0}$ is defined in \eqref{GWQ}. Note that the solution $u_{0}$ to problem \eqref{P2.005} can be further decomposed as follows:
$$u_{0}=\sum_{l=1}^{d}u_{0l},$$
where $u_{0l}$ satisfies
\begin{align}\label{RTP101}
\begin{cases}
\mathcal{L}_{\lambda,\mu}u_{0l}=0,\quad&
\hbox{in}\  \Omega,  \\
u_{0l}=0&\hbox{on}\ \partial{D}_{1},\\
u_{0l}=(0,...,0,\varphi^{l}(x),0,...,0)^{T},&\hbox{on} \ \partial{D}.
\end{cases}
\end{align}
First, we extend $\varphi\in C^{1,\alpha}(\partial D;\mathbb{R}^{d})$ to $\varphi\in C^{1,\alpha}(\overline{\Omega};\mathbb{R}^{d})$ verifying that $\|\varphi^{l}\|_{C^{1,\alpha}(\overline{\Omega\setminus\Omega_{R}})}\leq C\|\varphi^{l}\|_{C^{1,\alpha}(\partial D)}$, $l=1,2,...,d.$ Construct a smooth cutoff function $\rho\in C^{1,\alpha}(\overline{\Omega})$ such that $0\leq\rho\leq1$, $|\nabla\rho|\leq C$ in $\overline{\Omega}$, and
\begin{align*}
\rho=1\;\mathrm{in}\;\Omega_{\frac{3}{2}R},\quad\rho=0\;\mathrm{in}\;\overline{\Omega}\setminus\Omega_{2R}.
\end{align*}
For $x\in\Omega$, define
\begin{align*}
\bar{u}_{0l}(x):=(0,...,0,[\rho(x)\varphi^{l}(x',h(x'))+(1-\rho(x))\varphi^{l}(x)](1-\bar{v}(x)),0,...,0)^{T}.
\end{align*}
Then, we have
\begin{align*}
\nabla\bar{u}_{0}=\sum_{l=1}^{d}\nabla\bar{u}_{0l},\quad\mathrm{in}\;\Omega_{R},
\end{align*}
where
\begin{align*}
\bar{u}_{0l}(x):=(0,...,0,\varphi^{l}(x',h(x'))(1-\bar{v}(x)),0,...,0)^{T},\quad\mathrm{in}\;\Omega_{R}.
\end{align*}

\begin{theorem}\label{lem89999}
Assume as in Theorems \ref{Lthm066} and \ref{Lthm06666}. Let $u_{0l},\,l=1,2,...,d$ be the weak solution of (\ref{RTP101}). Assume that $\|\varphi\|_{C^{1}(\partial D)}>0$. Then for a sufficiently small $\varepsilon>0$, $l=1,2,...,d$,
\begin{align*}
\nabla u_{0l}=\nabla\bar{u}_{0l}+O(1)\delta^{-\frac{1}{1+\alpha}}\big(|\varphi^{l}(x',h(x'))|+\delta^{\frac{1}{1+\alpha}}\|\varphi^{l}\|_{C^{1}(\partial D)}\big),\quad \mathrm{in}~\Omega_{R}.
\end{align*}
Consequently,
\begin{align}\label{RTP1311}
\nabla u_{0}=\nabla\bar{u}_{0}+O(1)\delta^{-\frac{1}{1+\alpha}}\big(|\varphi(x',h(x'))|+\delta^{\frac{1}{1+\alpha}}\|\varphi\|_{C^{1}(\partial D)}\big),\quad \mathrm{in}~\Omega_{R},
\end{align}
where $\bar{u}_{0}$ is defined in \eqref{GWQ}.
\end{theorem}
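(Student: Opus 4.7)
\medskip

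\textbf{Proposal for the proof of Theorem \ref{lem89999}.}

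The plan is to let $w_{l}:=u_{0l}-\bar{u}_{0l}$, read off its PDE, and then run a Bao--Li--Li-type rescaled energy iteration on the thin gap $\Omega_{R}$. By construction of $\bar{v}$ and $\bar{u}_{0l}$ one checks directly that $\bar{u}_{0l}=0$ on $\partial D_{1}$ and $\bar{u}_{0l}=(0,\dots,\varphi^{l},\dots,0)^{T}$ on $\partial D$, so $w_{l}\in H^{1}_{0}(\Omega;\mathbb{R}^{d})$ satisfies
\begin{equation*}
\mathcal{L}_{\lambda,\mu}w_{l}=-\mathcal{L}_{\lambda,\mu}\bar{u}_{0l}\quad\text{in }\Omega,\qquad w_{l}=0\;\text{on }\partial\Omega.
\end{equation*}
First I would record the explicit form of $\mathcal{L}_{\lambda,\mu}\bar{u}_{0l}$ in $\Omega_{R}$, separating the terms that come from differentiating $(1-\bar{v})$ twice (these carry factors $\delta^{-2}$ and are dominated by $|\varphi^{l}(x',h(x'))|\,\delta^{-2}$, times suitable powers of $|x'|^{\alpha}$ coming from $\nabla_{x'}h$ and $\nabla_{x'}h_{1}$) from the terms that involve one or zero derivatives on $(1-\bar{v})$ (controlled by $\|\varphi^{l}\|_{C^{1}(\partial D)}\,\delta^{-1}$ and $\|\varphi^{l}\|_{C^{1}(\partial D)}$). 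These are the same kinds of bounds that appear in \cite{BLL2015,BJL2017,LZ2020}, so the plan is to import them.

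Next I would run the iteration on cylinders $\Omega_{t}(z')$ with $z'\in B'_{R}$ and $t\in[\delta(z')^{1/(1+\alpha)},R]$. The two building blocks are: (a) a Caccioppoli inequality adapted to the $w_{l}$ equation, giving
\[
\int_{\Omega_{t}(z')}|\nabla w_{l}|^{2}\leq\frac{C}{(s-t)^{2}}\int_{\Omega_{s}(z')}|w_{l}|^{2}+C\!\int_{\Omega_{s}(z')}|\bar{u}_{0l}|\,|\mathcal{L}_{\lambda,\mu}\bar{u}_{0l}|,
\]
combined with an $L^{2}$--Poincar\'e bound exploiting $w_{l}\!\!\restriction_{\partial D_{1}\cup\partial D}=0$, and (b) the rescaling $y'=(x'-z')/\delta(z')^{1/(1+\alpha)}$, $y_{d}=(x_{d}-h(z'))/\delta(z')$ that flattens the gap to a domain of size $O(1)$ on which standard Agmon--Douglis--Nirenberg estimates apply to the rescaled $w_{l}$. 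Undoing the scaling turns the bound on $\|\nabla \widetilde w_{l}\|_{L^{\infty}}$ into the pointwise estimate
\[
|\nabla w_{l}(x)|\leq C\,\delta(x')^{-1/(1+\alpha)}\bigl(|\varphi^{l}(x',h(x'))|+\delta(x')^{1/(1+\alpha)}\|\varphi^{l}\|_{C^{1}(\partial D)}\bigr),
\]
which is the asserted inequality. Summing over $l=1,\dots,d$ yields \eqref{RTP1311}.

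The main obstacle is the sharp choice of rescaling radius $t_{*}=\delta(z')^{1/(1+\alpha)}$: this is the right natural length for $C^{1,\alpha}$ boundaries (the exponent must match the geometric profile in (\textbf{S1})), and it is this exponent that produces the factor $\delta^{-1/(1+\alpha)}$ in the final bound rather than the naive $\delta^{-1}$. Carrying out the iteration therefore requires quantifying the ``source'' $\mathcal{L}_{\lambda,\mu}\bar{u}_{0l}$ and the ``data'' $|\varphi^{l}(x',h(x'))|$ on each shell $\Omega_{t_{*}}(z')\setminus\Omega_{t_{*}/2}(z')$ and checking that the contributions from the two different smallness scales --- the local boundary value $\varphi^{l}(x',h(x'))$ (which may vanish near $x'=0'$) and the global $C^{1}$ norm of $\varphi^{l}$ --- can be tracked \emph{separately} through the iteration. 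Once that bookkeeping is set up, the argument closes by standard elliptic regularity outside $\Omega_{R}$, where $\|\nabla w_{l}\|_{L^{\infty}(\Omega\setminus\Omega_{R})}\leq C\|\varphi^{l}\|_{C^{1,\alpha}(\partial D)}$ is absorbed into the stated remainder.
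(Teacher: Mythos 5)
Your plan follows the same broad outline as the paper (decompose $u_{0l}=\bar u_{0l}+w_{l}$, prove a global $L^2$ bound, run a local energy iteration on cylinders $\Omega_t(z')$, rescale to unit size, invoke elliptic regularity), so the skeleton is correct. But there is one genuine gap at the heart of your Step (a), and one place where you misattribute where the exponent $\delta^{-1/(1+\alpha)}$ comes from.

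The gap is the term $\int_{\Omega_s(z')}|\bar u_{0l}|\,|\mathcal L_{\lambda,\mu}\bar u_{0l}|$ in your Caccioppoli inequality. Under assumptions $(\mathbf{S1})$--$(\mathbf{S3})$ the profiles $h$ and $h_1$ are only $C^{1,\alpha}$, so $\bar v$ (and hence $\bar u_{0l}$) is only $C^{1,\alpha}$ in $\Omega_R$; except for $\partial_{dd}\bar v\equiv0$, the second derivatives $\partial_i\partial_j\bar v$ would require $\nabla^2 h$, $\nabla^2h_1$, which do not exist pointwise. Thus $\mathcal L_{\lambda,\mu}\bar u_{0l}$ is only a distribution, not an $L^1_{\mathrm{loc}}$ function, and your Caccioppoli right-hand side is ill-defined. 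This is precisely what the paper's proof is built to avoid: it keeps the source in pure divergence form, $-\mathcal L_{\lambda,\mu}w=\nabla\cdot(\mathbb C^0e(\bar u_{0l})-\mathcal M)$ for an arbitrary constant matrix $\mathcal M$, and then makes the Campanato choice $\mathcal M=\fint_{\Omega_s(z')}\mathbb C^0e(\bar u_{0l})$. The resulting Caccioppoli right-hand side is $\int_{\Omega_s(z')}|\mathbb C^0 e(\bar u_{0l})-\mathcal M|^2$, which is controlled by the H\"older seminorm $[\nabla\bar u_{0l}]_{\alpha,\Omega_s(z')}$ — a first-order quantity that exists for $C^{1,\alpha}$ domains — and, crucially, carries an extra factor $s^{2\alpha}$ of smallness as $s\downarrow0$. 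Without this freezing trick the source term has the wrong order and the iteration does not close at the exponent $d-\tfrac{2}{1+\alpha}$. For the same reason, the pointwise step must not invoke a source-form ADN estimate (which would require $\mathcal L_{\lambda,\mu}\bar u_{0l}\in L^p$); the paper instead applies the divergence-form $C^{1,\alpha}$ and $W^{1,p}$ estimates of Lemmas \ref{CL001}--\ref{CL002}, with $f_{ij}=C^0_{ijkl}\partial_l U^k$.

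Secondly, the paper rescales isotropically, $x'-z'=\delta y'$, $x_d=\delta y_d$, so after rescaling the Lam\'e operator is unchanged and $Q_1$ is of nearly unit size because $\delta(x')/\delta(z')\approx1$ on $|x'-z'|<\delta$. Your anisotropic rescaling $y'=(x'-z')/\delta^{1/(1+\alpha)}$, $y_d=(x_d-h(z'))/\delta$ produces an anisotropic operator and, viewed naively, the worst direction ($x_d$) still scales like $\delta^{-1}$. The improvement from $\delta^{-1}$ to $\delta^{-1/(1+\alpha)}$ does not come from the rescaling length; it comes from the local energy estimate \eqref{step2}, which yields $\|\nabla W\|_{L^2(Q_1)}\lesssim\delta^{\alpha/(1+\alpha)}(\dots)$ and hence $\|\nabla w\|_{L^\infty}\lesssim\delta^{-1}\cdot\delta^{\alpha/(1+\alpha)}(\dots)=\delta^{-1/(1+\alpha)}(\dots)$ after undoing the isotropic scaling. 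So the exponent is won in the iteration (Part 2), and that iteration in turn relies on the $\mathcal M$-subtraction and the H\"older estimate $[\nabla\bar u_{0l}]_{\alpha,\Omega_s(z')}\lesssim(|\varphi^l(z',h(z'))|\delta^{-(2+\alpha)/(1+\alpha)}+\|\varphi^l\|_{C^1}\delta^{-1})s^{1-\alpha}$, neither of which appears in your outline.
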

\begin{remark}
Due to the assumption of $\varphi(0)=0$ above, we refine the expansion \eqref{RTP1311} as follows:
\begin{align*}
\nabla u_{0}=\nabla\bar{u}_{0}+O(1)\|\varphi\|_{C^{1}(\partial D)},\quad \mbox{in}~\Omega_{R}.
\end{align*}
\end{remark}

In the following, we will use an adapted version of the iteration technique with respect to the energy to prove Theorem \ref{lem89999}, which was developed in \cite{CL2019} by combining the Campanato's approach and $W^{1,p}$ estimates for elliptic systems with right hand side in divergence form. To begin with, we state the following two lemmas, which are Theorem 2.3 and Theorem 2.4 in \cite{CL2019}. For simplicity, in this section we denote $\partial_{j}:=\partial_{x_{j}}$, $j=1,2,...,d$. Let $Q$ be a bounded domain in $\mathbb{R}^{d}$, $d\geq2$, with $C^{1,\alpha}\,(0<\alpha<1)$ boundary portion $\Gamma\subset\partial Q$. Consider the boundary value problem as follows:
\begin{align}\label{ADCo1}
\begin{cases}
-\partial_{j}(C_{ijkl}^{0}\partial_{l}w^{k})=\partial_{j}f_{ij},&in\; Q,\\
w=0,&on\;\Gamma,
\end{cases}
\end{align}
where $f_{ij}\in C^{0,\alpha}(Q)$, $i,j,k,l=1,2,...,d$, and the Einstein summation convention in repeated indices is used.
\begin{lemma}\label{CL001}
Let $w\in H^{1}(Q;\mathbb{R}^{d})\cap C^{1}(Q\cup\Gamma;\mathbb{R}^{d})$ be the solution of \eqref{ADCo1}. Then for any subdomain $Q'\subset\subset Q\cup\Gamma$,
\begin{align}\label{GNA001}
\|w\|_{C^{1,\alpha}(Q')}\leq C\left(\|w\|_{L^{\infty}(Q)}+[F]_{\alpha,Q}\right),
\end{align}
where $F:=(f_{ij})$ and $C=C(d,\alpha,Q',Q)$.
\end{lemma}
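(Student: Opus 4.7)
The approach I would take is the standard Campanato-type proof of a Schauder estimate for divergence-form elliptic systems. The plan is to characterize $\nabla w \in C^{0,\alpha}$ through decay of the mean oscillation on balls at rate $r^{\alpha}$, and to obtain this decay by a freeze-and-perturb argument in which solutions of the constant-coefficient homogeneous Lam\'e system serve as the unperturbed comparison.

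First I would treat the interior case. Fix $x_0$ with $B_{2r}(x_0)\subset Q$ and let $v$ solve the homogeneous system $-\partial_j(C^0_{ijkl}\partial_l v^k)=0$ in $B_r(x_0)$ with $v=w$ on $\partial B_r(x_0)$. Because $\mathbb{C}^0$ has constant coefficients and satisfies the ellipticity \eqref{ellip}, classical interior estimates for $v$ give the decay
\[
\int_{B_\rho(x_0)}|\nabla v-(\nabla v)_{B_\rho}|^2 \leq C(\rho/r)^{d+2}\int_{B_r(x_0)}|\nabla v-(\nabla v)_{B_r}|^2,\quad 0<\rho\leq r.
\]
The correction $\eta:=w-v$ solves $-\partial_j(C^0_{ijkl}\partial_l\eta^k)=\partial_j(f_{ij}-(f_{ij})_{B_r})$ with zero Dirichlet data on $\partial B_r$. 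Testing against $\eta$ and using ellipticity gives $\int_{B_r}|\nabla\eta|^2\leq Cr^{d+2\alpha}[F]_{\alpha,Q}^2$. Combining via the triangle inequality produces the hole-filling inequality
\[
\int_{B_\rho}|\nabla w-(\nabla w)_{B_\rho}|^2 \leq C(\rho/r)^{d+2}\int_{B_r}|\nabla w|^2 + Cr^{d+2\alpha}[F]_{\alpha,Q}^2,
\]
to which Giaquinta's iteration lemma applies, yielding $\rho^{-(d+2\alpha)}\int_{B_\rho}|\nabla w-(\nabla w)_{B_\rho}|^2\leq C(\|w\|_{L^\infty(Q)}^2+[F]_{\alpha,Q}^2)$ once one bounds the global $L^2$-norm of $\nabla w$ by Caccioppoli together with the $L^\infty$ bound on $w$. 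Campanato's characterization of $C^{0,\alpha}$ then translates this into the required H\"older bound on $\nabla w$.

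Next I would handle the boundary portion. For $x_0\in\Gamma$, flatten $\Gamma$ locally by a $C^{1,\alpha}$ diffeomorphism $\Phi$, which transforms the problem into a divergence-form system $-\partial_j(\tilde{C}_{ijkl}(x)\partial_l\tilde{w}^k)=\partial_j\tilde{f}_{ij}$ on a half-ball, with $\tilde{C}_{ijkl}\in C^{0,\alpha}$ and $\tilde{f}_{ij}\in C^{0,\alpha}$. Freeze $\tilde{C}$ at $x_0$ and compare with the solution of the resulting constant-coefficient homogeneous system on the half-ball with zero Dirichlet data on the flat piece; standard boundary estimates for constant-coefficient systems yield the same $(\rho/r)^{d+2}$ decay. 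The perturbation then picks up an extra term from the H\"older oscillation of $\tilde{C}$, which is absorbed into the same iteration scheme by a small-constant argument. A finite covering of $\overline{Q'}$ by interior balls and boundary half-balls completes the estimate.

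The main obstacle is the boundary argument. On the $C^{1,\alpha}$ portion $\Gamma$, the flattening produces coefficients with only $C^{0,\alpha}$ regularity (inherited from $\nabla\Phi\in C^{0,\alpha}$), so the boundary freeze-and-perturb step must simultaneously control the frozen-coefficient residual, the H\"older oscillation of the transformed coefficients, and the oscillation of the transformed right-hand side, all within a single Campanato iteration. Getting the constant $C$ to depend only on $d,\alpha,Q',Q$ (with the dependence on the $C^{1,\alpha}$ character of $\Gamma$ hidden inside $Q$) requires careful bookkeeping in this boundary Campanato argument; this is where most of the technical work lies.
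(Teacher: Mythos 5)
Your proposal is correct and follows essentially the same Campanato/Schauder strategy as the paper: freeze-and-perturb against constant-coefficient comparison solutions for the interior decay, flatten the $C^{1,\alpha}$ boundary portion with a $C^{1,\alpha}$ diffeomorphism so the transformed coefficients and right-hand side are $C^{0,\alpha}$, freeze again on the half-ball, absorb the coefficient-oscillation error by a smallness argument, and conclude with a finite covering. The only differences are presentational: the paper cites Giaquinta--Martinazzi's Theorem~5.14 as a black box for the frozen-coefficient interior and half-ball estimates (recorded as Theorem~\ref{ASDL0} and Corollary~\ref{CO001}) and performs the boundary absorption at the seminorm level via the interpolation inequality $\|\nabla\mathcal{W}\|_{L^{\infty}}\leq R^{\alpha}[\nabla\mathcal{W}]_{\alpha}+\tfrac{C}{R}\|\mathcal{W}\|_{L^{\infty}}$ plus a choice of $\widehat{R}\leq(3\widehat{C})^{-1/\alpha}$, whereas you re-derive the interior Campanato decay from scratch and fold the boundary oscillation into the iteration lemma with a small-constant hypothesis; these are equivalent routes. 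One small slip: in your interior hole-filling inequality the term $(\rho/r)^{d+2}\int_{B_r}|\nabla w|^2$ should read $(\rho/r)^{d+2}\int_{B_r}|\nabla w-(\nabla w)_{B_r}|^2$ so that Giaquinta's iteration lemma applies with the same quantity on both sides; the uncentered version is then recovered separately (decay of Dirichlet energy at rate $\rho^{d}$) and bounded by Caccioppoli, which is exactly how the cited Theorem~5.14 proceeds.
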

The H\"{o}lder semi-norm of matrix-valued function $F=(f_{ij})$ is defined as follows:
\begin{align*}
[F]_{\alpha,Q}:=\max_{1\leq i,j\leq d}[f_{ij}]_{\alpha,Q}\quad\mathrm{and}\quad[f_{ij}]_{\alpha,Q}=\sup_{x,y\in Q,x\neq y}\frac{|f_{ij}(x)-f_{ij}(y)|}{|x-y|^{\alpha}}.
\end{align*}

\begin{lemma}\label{CL002}
Assume that $Q$ and $\Gamma$ are defined as in Lemma \ref{CL001}. Let $w\in H^{1}(Q;\mathbb{R}^{d})$ be the weak solution of \eqref{ADCo1} with $f_{ij}\in C^{0,\alpha}(Q)$, $0<\alpha<1$ and $i,j=1,2,...,d$. Then, for any $2\leq p<\infty$ and $Q'\subset\subset Q\cup\Gamma$,
\begin{align}\label{LNZ001}
\|w\|_{W^{1,p}(Q')}\leq C(\|w\|_{H^{1}(Q)}+\|F\|_{L^{p}(Q)}),
\end{align}
where $C=C(\lambda,\mu,p,Q')$ and $F:=(f_{i}^{k})$. In particular, if $p>d$, we have
\begin{align}\label{LNZ002}
\|w\|_{C^{0,\gamma}(Q')}\leq C(\|w\|_{H^{1}(Q)}+[F]_{\alpha,Q}),
\end{align}
where $0<\gamma\leq1-d/p$ and $C=C(\lambda,\mu,\gamma,p,Q')$.
\end{lemma}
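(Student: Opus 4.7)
The estimate \eqref{LNZ001} is a Calder\'on--Zygmund / ADN-type $W^{1,p}$ bound in divergence form for the constant-coefficient Lam\'e operator with zero Dirichlet data on a $C^{1,\alpha}$ boundary portion. I would derive it in three steps: (a) localize via a cutoff supported well inside $Q\cup\Gamma$, (b) flatten $\Gamma$ by a $C^{1,\alpha}$ diffeomorphism to reduce to a half-ball problem with zero Dirichlet data on the flat face, (c) invoke the classical $W^{1,p}$ theory for elliptic systems in divergence form with H\"older (in particular VMO) coefficients. The H\"older estimate \eqref{LNZ002} follows by coupling \eqref{LNZ001} for $p>d$ with Morrey's embedding, after exploiting the divergence-form invariance of the equation to convert $\|F\|_{L^p(Q)}$ into a constant times $[F]_{\alpha,Q}$.

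\textbf{Steps for \eqref{LNZ001}.} Pick an intermediate open set $Q'\Subset Q''\Subset Q\cup\Gamma$ and a cutoff $\eta\in C_c^\infty(Q\cup\Gamma)$ with $\eta\equiv 1$ on $Q'$ and supported in $Q''$. Setting $v^k:=\eta w^k$ and differentiating, $v$ solves a new elliptic system
\[
-\partial_j\bigl(C^0_{ijkl}\partial_l v^k\bigr)=\partial_j\bigl(\eta f_{ij}-C^0_{ijkl}(\partial_j\eta)w^k\bigr)+g_i\quad\text{in }Q,
\]
where $g_i$ gathers zero-order terms bounded in $L^p(Q'')$ by $C(\|w\|_{H^1(Q)}+\|F\|_{L^p(Q)})$, using the support of $\nabla\eta$. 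I would cover $\overline{Q''}$ by finitely many interior balls and boundary charts around $\Gamma$. On an interior ball I apply the standard interior $W^{1,p}$ estimate for the constant-coefficient Lam\'e system, whose Legendre--Hadamard ellipticity follows from $\mu>0$ and $d\lambda+2\mu>0$. In a boundary chart, a $C^{1,\alpha}$ diffeomorphism $\Phi$ straightens $\Gamma$ and turns the system into a divergence-form one on the half-ball with transformed coefficients $\tilde C_{ijkl}\in C^{0,\alpha}$ that still satisfy a quantitative Legendre--Hadamard condition (depending only on $\lambda,\mu$ and $\|\Gamma\|_{C^{1,\alpha}}$), together with zero Dirichlet data on the flat face. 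The half-space $W^{1,p}$ theory (ADN, or its modern divergence-form/VMO reformulation) then gives $\|v\|_{W^{1,p}}\le C(\|v\|_{L^p}+\|\tilde F\|_{L^p}+\|g\|_{L^p})$; summing over charts and bounding $\|v\|_{L^p(Q'')}$ by $\|w\|_{H^1(Q)}$ (directly when $p\le 2^{\ast}$, and by a standard bootstrap otherwise) produces \eqref{LNZ001}.

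\textbf{Steps for \eqref{LNZ002}.} Fix $p>d$ and $0<\gamma\le 1-d/p$. Because the equation in \eqref{ADCo1} sees $F$ only through $\partial_j f_{ij}$, replacing $f_{ij}$ by $f_{ij}-f_{ij}(x_0)$ for any fixed $x_0\in Q$ leaves both the PDE and the boundary condition unchanged. Choosing such an $x_0$ and using the H\"older semi-norm gives the pointwise bound $|F(x)|\le(\mathrm{diam}\,Q)^\alpha[F]_{\alpha,Q}$, hence
\[
\|F\|_{L^p(Q)}\le C(|Q|,\mathrm{diam}\,Q)\,[F]_{\alpha,Q}.
\]
Inserting this into \eqref{LNZ001} (applied on a slightly enlarged subdomain containing $Q'$) and then using Morrey's embedding $W^{1,p}(Q')\hookrightarrow C^{0,1-d/p}(Q')$, with $\gamma\le 1-d/p$, yields \eqref{LNZ002}.

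\textbf{Main obstacle.} The delicate ingredient is the boundary flattening: one must check that after the $C^{1,\alpha}$ change of variables the transformed coefficients remain H\"older continuous with a quantitative modulus depending only on $\lambda,\mu$ and $\|\Gamma\|_{C^{1,\alpha}}$, and continue to satisfy the Legendre--Hadamard condition, so that the half-space $W^{1,p}$ theory applies with constants independent of the particular chart. A secondary technical care is bookkeeping the cutoff error terms so they accumulate only on the right-hand side rather than reappearing in the estimated norm; a chain of nested subdomains $Q'\Subset Q_1\Subset\cdots\Subset Q_N\Subset Q\cup\Gamma$ together with an inductive absorption handles this cleanly.
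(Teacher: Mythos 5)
Your proposal is correct and follows essentially the same route as the paper: localization by a cutoff, flattening of $\Gamma$ by a $C^{1,\alpha}$ chart, the divergence-form $W^{1,p}$ theory of Giaquinta--Martinazzi on interior balls and half-balls, a Sobolev bootstrap to reach the exponent $p$, and for \eqref{LNZ002} the same subtraction of a constant matrix from $F$ followed by the embedding $W^{1,p}\hookrightarrow C^{0,\gamma}$. The only caveat is that your cutoff error terms $g_i$ contain $\nabla w\cdot\nabla\eta$ and are a priori only in $L^{2}$, not $L^{p}$; this is precisely where the bootstrap must act (the paper implements it by solving $-\Delta v^{i}=G_{i}$ and absorbing $\nabla v$ into the divergence-form right-hand side, gaining one Sobolev exponent per step), which you do acknowledge.
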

For readers' convenience, the detailed proofs of Lemmas \ref{CL001} and \ref{CL002} are left in the Appendix.

\begin{proof}[The proof of Theorem \ref{lem89999}]
Take $l=1$ for example. Other cases are the same. For simplicity, we denote
$$w:=u_{01}-\bar{u}_{01},$$
where $\bar{u}_{1}$ is defined in \eqref{GWQ}. Then $w$ solves
\begin{align}\label{Zww01}
\begin{cases}
-\mathcal{L}_{\lambda, \mu}w=\nabla\cdot(\mathbb{C}^0e(\bar{u}_{01})),\quad&\hbox{in}\ \Omega,\\
w=0,&\hbox{on}\ \partial{\Omega}.
\end{cases}
\end{align}
Obviously, $w$ also satisfies that for any constant matrix $\mathcal{M}=(\mathfrak{a}_{ij})$,
\begin{align}\label{ZWZW001}
-\mathcal{L}_{\lambda, \mu}w=\nabla\cdot(\mathbb{C}^0e(\bar{u}_{01})-\mathcal{M}),\quad&\hbox{in}\ \Omega.
\end{align}

We next divide into three parts to prove Theorem \ref{lem89999}. For simplicity, we use $\|\varphi^{1}\|_{C^{1}}$ to denote $\|\varphi^{1}\|_{C^{1}(\partial D)}$ in this section.

{\bf Part 1.} Proof of
\begin{align}\label{zzwad01}
\|\nabla w\|_{L^{2}(\Omega)}\leq C\|\varphi^{1}\|_{C^{1}}.
\end{align}
From \eqref{Zww01}, we see
\begin{align}\label{ZH001}
\int_{\Omega}(\mathbb{C}^{0}e(w),e(w))\,dx=-\int_{\Omega}(\mathbb{C}^{0}e(\bar{u}_{01}),e(w))\,dx.
\end{align}
On one hand, making use of \eqref{ellip} and the first Korn's inequality, we obtain
\begin{equation}\label{def_w}
\int_{\Omega}(\mathbb{C}^{0}e(w),e(w))\,dx\geq\frac{1}{C}\int_{\Omega}|e(w)|^{2}dx\geq\frac{1}{C}\int_{\Omega}|\nabla w|^{2}dx.
\end{equation}

On the other hand, it follows from the H\"{o}lder inequality that
\begin{align*}
\left|\int_{\Omega}(\mathbb{C}^{0}e(\bar{u}_{01}),e(w))\,dx\right|\leq&\left|\int_{\Omega_{R}}(\mathbb{C}^{0}e(\bar{u}_{01}),e(w))\,dx\right|+\left|\int_{\Omega\setminus\Omega_{R}}(\mathbb{C}^{0}e(\bar{u}_{01}),e(w))\,dx\right|\\
\leq&\left|\int_{\Omega_{R}}(\mathbb{C}^{0}e(\bar{u}_{01}),e(w))\,dx\right|+C\int_{\Omega\setminus\Omega_{R}}|\nabla\bar{u}_{01}||\nabla w|\,dx\\
\leq&\left|\int_{\Omega_{R}}(\mathbb{C}^{0}e(\bar{u}_{01}),e(w))\,dx\right|+C\|\varphi^{1}\|_{C^{1}}\|\nabla w\|_{L^{2}(\Omega\setminus\Omega_{R})}.
\end{align*}
Denote
\begin{align*}
\mathcal{A}=\int_{\Omega_{R}}(\mathbb{C}^{0}e(\bar{u}_{01}),e(w))\,dx.
\end{align*}
Recalling the definitions of $\mathbb{C}^{0}$ and $\bar{u}_{01}$, it follows from a direct calculation that
\begin{align*}
(\mathbb{C}^{0}e(\bar{u}_{01}),e(w))=&\lambda\partial_{1}\bar{u}_{01}^{1}\partial_{1}w^{1}+\mu\sum^{d}_{i=1}(\partial_{i}w^{1}+\partial_{1}w^{i})\partial_{i}\bar{u}_{01}^{1},
\end{align*}
where $\bar{u}_{01}^{1}=\varphi^{1}(x',h(x'))(1-\bar{v}).$ Then we decompose $\mathcal{A}$ into two parts as follows:
\begin{align*}
\mathcal{A}_{1}=&\int_{\Omega_{R}}\lambda\partial_{1}\bar{u}_{01}^{1}\partial_{1}w^{1}+\mu\sum^{d-1}_{i=1}(\partial_{i}w^{1}+\partial_{1}w^{i})\partial_{i}\bar{u}_{01}^{1},\\
\mathcal{A}_{2}=&\int_{\Omega_{R}}\mu(\partial_{d}w^{1}+\partial_{1}w^{d})\partial_{d}\bar{u}_{01}^{1}.
\end{align*}
From the H\"{o}lder inequality, we have
\begin{align}\label{ZH002}
|\mathcal{A}_{1}|\leq&C\|\nabla_{x'}\bar{u}_{01}^{1}\|_{L^{2}(\Omega_{R})}\|\nabla w\|_{L^{2}(\Omega_{R})}\leq C\|\varphi^{1}\|_{C^{1}}\|\nabla w\|_{L^{2}(\Omega_{R})},
\end{align}
while, in view of $\partial_{dd}\bar{v}=0$ in $\Omega_{R}$, it follows from the Sobolev trace embedding and integration by parts that
\begin{align}\label{ZH003}
|\mathcal{A}_{2}|\leq&\int\limits_{\scriptstyle |x'|={R},\atop\scriptstyle
h(x')<x_{d}<\varepsilon+h_1(x')\hfill}\mu\left| w^{1}\partial_{d}\bar{u}_{01}^{1}\nu_{d}+w^{d}\partial_{d}\bar{u}_{01}^{1}\nu_{1}-w^{d}\partial_{1}\bar{u}_{01}^{1}\nu_{d}\right|+\int_{\Omega_{R}}\left|\mu\partial_{1}\bar{u}_{01}^{1}\partial_{d}w^{d}\right|\notag\\
\leq&\int\limits_{\scriptstyle |x'|={R},\atop\scriptstyle
h(x')<x_{d}<\varepsilon+h_1(x')\hfill}C\|\varphi^{1}\|_{C^{1}}|w|+C\|\partial_{1}\bar{u}_{01}^{1}\|_{L^{2}(\Omega_{R})}\|\nabla w\|_{L^{2}(\Omega_{R})}\notag\\
\leq&C\|\varphi^{1}\|_{C^{1}}\|\nabla w\|_{L^{2}(\Omega)}.
\end{align}
Then combining with \eqref{ZH001}--\eqref{ZH003}, we arrive at
\begin{align*}
\int_{\Omega}|\nabla w|^{2}dx\leq&C\|\varphi^{1}\|_{C^{1}}\left(\int_{\Omega}|\nabla w|^{2}dx\right)^{\frac{1}{2}}.
\end{align*}
That is, \eqref{zzwad01} is proved.

{\bf Part 2.}
Proof of
\begin{align}\label{step2}
 \int_{\Omega_\delta(z')}|\nabla w|^2dx
 &\leq C\delta^{d-\frac{2}{1+\alpha}}\big(|\varphi^{1}(z',h(z'))|^2+\delta^{\frac{2}{1+\alpha}}\|\varphi^{1}\|_{C^{1}}^2\big),\quad|z'|\leq R.
\end{align}
For $0<t<s<R$, introduce a smooth cutoff function $\eta$ such that $0\leq\eta(x')\leq1$, $\eta(x')=1$ if $|x'-z'|<t$, $\eta(x')=0$ if $|x'-z'|>s$, and $|\nabla\eta(x')|\leq\frac{2}{s-t}$. Multiplying equation \eqref{ZWZW001} by $w\eta^{2}$, it follows from integration by parts that
\begin{align}\label{LMQ001}
\int_{\Omega_{s}(z')}(\mathbb{C}^{0}e(w),e(w\eta^{2}))\,dx=-\int_{\Omega_{s}(z')}(\mathbb{C}^{0}e(\bar{u}_{01})-\mathcal{M},e(w\eta^{2}))\,dx.
\end{align}
For the left hand side in \eqref{LMQ001}, it follows from \eqref{LAPXN001}, \eqref{ellip} and the first Korn's inequality that
\begin{align}\label{KLW01}
\int_{\Omega_{s}(z')}(\mathbb{C}^{0}e(w),e(w\eta^{2}))\,dx\geq&\frac{1}{C}\int_{\Omega_{s}(z')}|\nabla(w\eta)|^{2}dx-C\int_{\Omega_{s}(z')}|w|^{2}|\nabla\eta|^{2}dx,
\end{align}
while, for the right hand side in \eqref{LMQ001}, we see from the Young's inequality that for any $\zeta>0$,
\begin{align}\label{KLW02}
\left|\int_{\Omega_{s}(z')}(\mathbb{C}^{0}e(\bar{u}_{01})-\mathcal{M},e(w\eta^{2}))\,dx\right|\leq&\zeta\int_{\Omega_{s}(z')}\eta^{2}|\nabla w|^{2}dx+C\int_{\Omega_{s}(z')}|\nabla\eta|^{2}|w|^{2}dx\notag\\
&+\frac{C}{\zeta}\int_{\Omega_{s}(z')}|\mathbb{C}^{0}e(\bar{u}_{01})-\mathcal{M}|^{2}dx.
\end{align}
Then combining \eqref{KLW01} and \eqref{KLW02}, we have
\begin{align*}
\int_{\Omega_{t}(z')}|\nabla w|^{2}dx\leq\frac{C}{(s-t)^{2}}\int_{\Omega_{s}(z')}|w|^{2}dx+C\int_{\Omega_{s}(z')}|\mathbb{C}^{0}e(\bar{u}_{01})-\mathcal{M}|^{2}dx.
\end{align*}
Set
\begin{align*}
\mathcal{M}=\frac{1}{|\Omega_{s}(z')|}\int_{\Omega_{s}(z')}\mathbb{C}^{0}e(\bar{u}_{01}(y))\,dy.
\end{align*}

For $|z'|\leq R$, $0<s\leq\vartheta(\tau,\kappa_{1})\delta^{\frac{1}{1+\alpha}}$, $\vartheta(\tau,\kappa_{1})=\frac{1}{8\kappa_{1}\max\{1,\tau^{-\frac{\alpha}{1+\alpha}}\}}$, it follows from conditions ({\bf{S1}}) and ({\bf{S2}}) that for $(x',x_{d})\in\Omega_{s}(z')$,
\begin{align}\label{ASK001}
|\delta(x')-\delta(z')|\leq&|h_{1}(x')-h_{1}(z')|+|h(x')-h(z')|\notag\\
\leq&(|\nabla_{x'}h_{1}(x'_{\theta_{1}})|+|\nabla_{x'}h(x'_{\theta})|)|x'-z'|\notag\\
\leq&\kappa_{1}|x'-z'|(|x'_{\theta_{1}}|^{\alpha}+|x'_{\theta}|^{\alpha})\notag\\
\leq&2\kappa_{1}s(s^{\alpha}+|z'|^{\alpha})\notag\\
\leq&\frac{\delta(z')}{2}.
\end{align}
Then, we have
\begin{align}\label{QWN001}
\frac{1}{2}\delta(z')\leq\delta(x')\leq\frac{3}{2}\delta(z'),\quad\mathrm{in}\;\Omega_{s}(z').
\end{align}
In light of \eqref{QWN001}, it follows from a direct calculation that
\begin{align}\label{QWN002}
[\nabla\bar{u}_{01}]_{\alpha,\Omega_{s}(z')}\leq C\big(|\varphi^{1}(z',h(z'))|\delta^{-\frac{2+\alpha}{1+\alpha}}+\|\varphi^{1}\|_{C^{1}}\delta^{-1}\big)s^{1-\alpha}.
\end{align}
Since $w=0$ on $\partial\Omega$, we see from \eqref{QWN001} and \eqref{QWN002} that
\begin{align}\label{ADE007}
\int_{\Omega_{s}(z')}|w|^{2}\leq C\delta^{2}\int_{\Omega_{s}(z')}|\nabla w|^{2},
\end{align}
and
\begin{align}\label{ADE006}
&\int_{\Omega_{s}(z')}|\mathbb{C}^{0}e(\bar{u}_{01})-\mathcal{M}|^{2}dx\leq  Cs^{d+1}\delta^{-\frac{3+\alpha}{1+\alpha}}\big(|\varphi^{1}(z',h(z'))|^{2}+\delta^{\frac{2}{1+\alpha}}\|\varphi^{1}\|_{C^{1}}^{2}\big).
\end{align}

Denote
\begin{align*}
F(t):=\int_{\Omega_{t}(z')}|\nabla w|^{2}.
\end{align*}
Then we know from (\ref{ADE007})--(\ref{ADE006}) that
\begin{align}\label{ADE008}
F(t)\leq \left(\frac{c\delta}{s-t}\right)^2F(s)+Cs^{d+1}\delta^{-\frac{3+\alpha}{1+\alpha}}\big(|\varphi^{1}(z',h(z'))|^{2}+\delta^{\frac{2}{1+\alpha}}\|\varphi^{1}\|_{C^{1}}^{2}\big),
\end{align}
where $c$ and $C$ are universal constants.

Choose $k=\left[\frac{1}{4c\delta^{\frac{\alpha}{2(1+\alpha)}}}\right]+1$ and $t_{i}=\delta+2ci\delta,\;i=0,1,2,...,k$. Then, (\ref{ADE008}), together with $s=t_{i+1}$ and $t=t_{i}$, reads that
$$F(t_{i})\leq\frac{1}{4}F(t_{i+1})+C(i+1)^{n+1}\delta^{d-\frac{2}{1+\alpha}}\big(|\varphi^{1}(z',h(z'))|^{2}+\delta^{\frac{2}{1+\alpha}}\|\varphi^{1}\|^{2}_{C^{1}}\big).$$
After $k$ iterations, it follows from (\ref{zzwad01}) that for a sufficiently small $\varepsilon>0$,
\begin{align*}
F(t_{0})\leq C\delta^{d-\frac{2}{1+\alpha}}\big(|\varphi^{1}(z',h(z'))|^{2}+\delta^{\frac{2}{1+\alpha}}\|\varphi^{1}\|^{2}_{C^{1}}\big).
\end{align*}

{\bf Part 3.}
Proof of
\begin{align*}
|\nabla w(x)|\leq C\delta^{-\frac{1}{1+\alpha}}\big(|\varphi^{1}(x',h(x'))|+\delta^{\frac{1}{1+\alpha}}\|\varphi^{1}\|_{C^{1}}\big),\quad\mathrm{in}\;\Omega_{R}.
\end{align*}

Making use of a change of variables in $\Omega_{\delta}(z')$ as follows:
\begin{align*}
\begin{cases}
x'-z'=\delta y',\\
x_{d}=\delta y_{d},
\end{cases}
\end{align*}
$\Omega_{\delta}(z')$ becomes $Q_{1}$, where, for $0<r\leq 1$,
\begin{align*}
Q_{r}=\left\{y\in\mathbb{R}^{d}\,\Big|\,\frac{1}{\delta}h(\delta y'+z')<y_{d}<\frac{\varepsilon}{\delta}+\frac{1}{\delta}h_{1}(\delta y'+z'),\;|y'|<r\right\},
\end{align*}
with its top and bottom boundaries represented, respectively, by
\begin{align*}
\Gamma^{+}_{r}=&\left\{y\in\mathbb{R}^{d}\,\Big|\,y_{d}=\frac{\varepsilon}{\delta}+\frac{1}{\delta}h_{1}(\delta y'+z'),\;|y'|<r\right\},
\end{align*}
and
\begin{align*}
\Gamma^{-}_{r}=&\left\{y\in\mathbb{R}^{d}\,\Big|\,y_{d}=\frac{1}{\delta}h(\delta y'+z'),\;|y'|<r\right\}.
\end{align*}
In fact, $Q_{1}$ is of nearly unit size. Similarly as in \eqref{ASK001}, we deduce that for $x\in\Omega_{\delta}(z')$,
\begin{align*}
|\delta(x')-\delta(z')|
\leq&2\kappa_{1}\delta(\delta^{\alpha}+|z'|^{\alpha})\leq 4\kappa_{1}\max\{1,\tau^{-\frac{\alpha}{1+\alpha}}\}\delta^{\frac{1+2\alpha}{1+\alpha}},
\end{align*}
which indicates that
\begin{align}\label{KHW03}
\left|\frac{\delta(x')}{\delta(z')}-1\right|\leq 8\max\{1,\tau^{\frac{\alpha}{1+\alpha}}\}\kappa_{1}R^{\alpha}.
\end{align}
Since $R$ is a small positive constant, it follows from \eqref{KHW03} that $Q_{1}$ is of nearly unit size.
For $y\in Q_{1}$, define
\begin{align*}
W(y',y_{d}):=w(\delta y'+z',\delta y_{d}),\quad U(y',y_{d}):=\bar{u}_{01}(\delta y'+z',\delta y_{d}).
\end{align*}
From \eqref{Zww01}, we know that $W$ solves
\begin{align}\label{CL005}
\begin{cases}
-\partial_{j}(C_{ijkl}^{0}\partial_{l}W^{k})=\partial_{j}(C_{ijkl}^{0}\partial_{l}U^{k}),&in\; Q_{1},\\
W=0,&on\;\Gamma^{\pm}_{1}.
\end{cases}
\end{align}
Applying Theorems \ref{CL001} and \ref{CL002} for equation \eqref{CL005} with $f_{ij}=C_{ijkl}^{0}\partial_{l}U^{k}$ and utilizing the Poincar\'{e} inequality, we have
\begin{align*}
\|\nabla W\|_{L^{\infty}(Q_{1/4})}\leq&C\big(\|W\|_{L^{\infty}(Q_{1/2})}+[\nabla U]_{\alpha,Q_{1/2}}\big)\notag\\
\leq&C\left(\|\nabla W\|_{L^{2}(Q_{1})}+[\nabla U]_{\alpha,Q_{1}}\right),
\end{align*}
where we used the fact that $[C_{ijkl}^{0}\partial_{l}U^{k}]_{\alpha,Q_{1}}\leq[\nabla U]_{\alpha,Q_{1}}$.

Tracking back to $w$, we see
\begin{align}\label{GDA001}
\|\nabla w\|_{L^{\infty}(\Omega_{\delta/4}(z'))}\leq\frac{C}{\delta}\big(\delta^{1-\frac{d}{2}}\|\nabla w\|_{L^{2}(\Omega_{\delta}(z'))}+\delta^{1+\alpha}[\nabla\bar{u}_{01}]_{\alpha,\Omega_{\delta}(z')}\big).
\end{align}
Then substituting \eqref{step2} and \eqref{QWN002} into \eqref{GDA001}, we obtain that for $z\in\Omega_{R}$,
\begin{align*}
|\nabla w(z)|\leq \|\nabla w\|_{L^{\infty}(\Omega_{\delta/4})(z')}\leq C\delta^{-\frac{1}{1+\alpha}}\big(|\varphi^{1}(z',h(z'))|+\delta^{\frac{1}{1+\alpha}}\|\varphi^{1}\|_{C^{1}}\big).
\end{align*}
The proof is complete.

\end{proof}

In exactly the same way to the proof of Theorem \ref{lem89999}, we obtain the following corollary.
\begin{corollary}\label{thm86}
Assume as above. Let $u_{i}\in H^{1}(\Omega;\mathbb{R}^{d})$, $i=1,2,...,\frac{d(d+1)}{2}$ be a weak solution of (\ref{P2.005}). Then, for a sufficiently small $\varepsilon>0$ and $x\in\Omega_{R}$,
\begin{align}\label{Le2.025}
\nabla u_{i}=&\nabla\bar{u}_{i}+O(1)
\begin{cases}
\delta^{-\frac{1}{1+\alpha}},& i=1,2,...,d,\\
1,&i=d+1,...,\frac{d(d+1)}{2},
\end{cases}
\end{align}
where $\bar{u}_{i}$, $i=1,2,...,\frac{d(d+1)}{2}$, are defined in \eqref{GWQ}.
\end{corollary}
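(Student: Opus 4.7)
\medskip

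The plan is to run the same three-step argument used to prove Theorem~\ref{lem89999}, with the boundary datum $\varphi(x',h(x'))$ replaced by the rigid displacement $\psi_i$, and to track carefully how the size of $\psi_i$ in the thin gap determines whether the rate is $\delta^{-1/(1+\alpha)}$ or $O(1)$. Concretely, for each $i$ I would set $w_i := u_i - \bar{u}_i$, so that $w_i\in H^1(\Omega;\mathbb{R}^d)$ vanishes on $\partial\Omega$ and solves
\begin{equation*}
-\mathcal{L}_{\lambda,\mu}w_i = \nabla\cdot(\mathbb{C}^0 e(\bar u_i)) \quad\text{in }\Omega,
\end{equation*}
and likewise $-\mathcal{L}_{\lambda,\mu}w_i = \nabla\cdot(\mathbb{C}^0 e(\bar u_i) - \mathcal{M})$ for any constant matrix $\mathcal{M}$. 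The point is that $\bar u_i = \psi_i\bar v$, with $|\psi_i|\leq C$ bounded for $i\leq d$ and $|\psi_i|\leq C|x|$ for $i>d$; this is the only structural difference driving the two cases.

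\medskip

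\textbf{Step 1 (global $H^1$ bound on $w_i$).} Testing the equation against $w_i$ and using \eqref{ellip} together with the first Korn inequality, the task is to bound $\int_\Omega(\mathbb{C}^0 e(\bar u_i),e(w_i))$. Away from $\Omega_R$ the integrand is $O(1)$, and inside the gap I would split the integrand into tangential contributions (involving $\partial_{x'}\bar v$ and $\partial_{x'}\psi_i$, which are $L^2$-bounded thanks to conditions (\textbf{S2})--(\textbf{S3})) and the one singular normal term $\mu(\partial_d w_i^a+\partial_a w_i^d)\partial_d\bar u_i^a$. Exactly as in Part~1 of the proof of Theorem~\ref{lem89999}, I would integrate the latter by parts, using $\partial_{dd}\bar v = 0$ in $\Omega_R$, to move $\partial_d$ onto $w_i$ and pick up only a boundary term on the lateral face $|x'|=R$ together with an interior term involving $\partial_1\bar u_i^a\,\partial_d w_i^d$. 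This yields $\|\nabla w_i\|_{L^2(\Omega)}\leq C$.

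\medskip

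\textbf{Step 2 (Caccioppoli-type iteration).} For $|z'|\leq R$ and $0<t<s\leq \vartheta\,\delta^{1/(1+\alpha)}$, test the equation against $w_i\eta^2$ with $\eta$ supported in $\Omega_s(z')$ and equal to $1$ on $\Omega_t(z')$, choosing $\mathcal{M}$ to be the average of $\mathbb{C}^0 e(\bar u_i)$ over $\Omega_s(z')$. After the standard Young/Korn manipulations I obtain
\begin{equation*}
\int_{\Omega_t(z')}|\nabla w_i|^2\leq \frac{C}{(s-t)^2}\int_{\Omega_s(z')}|w_i|^2+C\int_{\Omega_s(z')}|\mathbb{C}^0 e(\bar u_i)-\mathcal{M}|^2,
\end{equation*}
which via the Poincar\'e inequality $\int|w_i|^2\leq C\delta^2\int|\nabla w_i|^2$ combined with the Hölder-seminorm bound
\begin{equation*}
[\nabla\bar u_i]_{\alpha,\Omega_s(z')}\leq C\bigl(|\psi_i(z')|\delta^{-\frac{2+\alpha}{1+\alpha}}+\delta^{-1}\bigr)s^{1-\alpha}
\end{equation*}
yields an iterative inequality of the same form as \eqref{ADE008}. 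Iterating $k\simeq \delta^{-\alpha/(2(1+\alpha))}$ times and using Step~1 as the base estimate gives
\begin{equation*}
\int_{\Omega_\delta(z')}|\nabla w_i|^2\leq C\delta^{d-\frac{2}{1+\alpha}}\bigl(|\psi_i(z')|^2+\delta^{\frac{2}{1+\alpha}}\bigr).
\end{equation*}

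\medskip

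\textbf{Step 3 (rescaling and bootstrap).} Finally I rescale $\Omega_\delta(z')$ to the unit-size cylinder $Q_1$ via $x=\delta y+(z',0)$, set $W(y):=w_i(\delta y+(z',0))$ and $U(y):=\bar u_i(\delta y+(z',0))$, so that $W$ satisfies \eqref{ADCo1} on $Q_1$ with $f_{ij}=C^0_{ijkl}\partial_l U^k$ and zero boundary data on $\Gamma_1^\pm$. Applying Lemma~\ref{CL001} (or Lemma~\ref{CL002}) and returning to original coordinates produces the pointwise bound
\begin{equation*}
|\nabla w_i(z)|\leq C\delta^{-1}\bigl(\delta^{1-d/2}\|\nabla w_i\|_{L^2(\Omega_\delta(z'))}+\delta^{1+\alpha}[\nabla\bar u_i]_{\alpha,\Omega_\delta(z')}\bigr),
\end{equation*}
and plugging in the Step~2 estimate together with $|\psi_i(z')|\leq C$ for $i\leq d$ and $|\psi_i(z')|\leq C|z|\leq C\delta^{1/(1+\alpha)}$ for $i>d$ gives exactly the dichotomy in \eqref{Le2.025}. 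The mildly delicate point is that the $O(1)$ rate for the rotation indices $i>d$ is not automatic from the general estimate but relies on the pointwise vanishing of $\psi_i$ at the contact point $0$; this is what I would make explicit when substituting into the final bound.
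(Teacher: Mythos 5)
Your proposal is correct and takes essentially the same approach as the paper, which simply states that Corollary~\ref{thm86} follows ``in exactly the same way'' as Theorem~\ref{lem89999}. You have correctly identified the one substantive point that needs to be supplied: substituting $\psi_i$ for $\varphi(\cdot,h(\cdot))$ and observing that $|\psi_i|\equiv 1$ for $i\leq d$ while $|\psi_i(x)|\leq C|x|\leq C\delta^{1/(1+\alpha)}$ for $i>d$ (since those $\psi_i$ vanish at the contact point $0$) is exactly what turns the unified bound $O(1)\delta^{-1/(1+\alpha)}\big(|\psi_i(z)|+\delta^{1/(1+\alpha)}\big)$ into the dichotomy of \eqref{Le2.025}.
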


Secondly, we present the asymptotic expansions of $C^{i}$, $i=1,2,...,\frac{d(d+1)}{2}$ in the following theorem with its proof given in Section \ref{SEC005}.
\begin{theorem}\label{OMG123}
Assume as in Theorem \ref{Lthm066}, then for a sufficiently small $\varepsilon>0$,
\begin{itemize}
\item[$(a)$] if $d=2$, for $i=1,2$,
\begin{align*}
C^{i}=&\frac{\det\mathbb{B}_{i}^{\ast}[\varphi]}{a_{33}^{\ast}}\frac{\varepsilon^{\frac{\alpha}{1+\alpha}}(1+O(\varepsilon(\alpha,\beta)))}{\mathcal{L}_{d}^{i}\mathcal{M}_{\alpha,\tau}},\quad C^{3}=\frac{Q_{3}^{\ast}[\varphi]}{a_{33}^{\ast}}(1+O(\varepsilon^{\frac{\alpha}{2(1+2\alpha)}})),
\end{align*}
where $\mathcal{M}_{\alpha,\tau}$ is defined by \eqref{zwzh001}, $\mathcal{L}_{2}^{i}$ is defined in \eqref{AZ}, $\varepsilon(\alpha,\beta)$ is defined by \eqref{ZWZHAO01}.

\item[$(b)$] if $d\geq3$, for $i=1,2,...,\frac{d(d+1)}{2}$,
\begin{align*}
C^{i}=\frac{\det\mathbb{F}_{i}^{\ast}[\varphi]}{\det\mathbb{A}^{\ast}}(1+O(\varepsilon^{\frac{\alpha^{2}}{2(1+2\alpha)(1+\alpha)^{2}}})),
\end{align*}
where the blow-up factor matrices $\mathbb{A}^{\ast}$ and $\mathbb{F}_{i}^{\ast}[\varphi]$ are defined in \eqref{matrix01}--\eqref{LATP001}.
\end{itemize}
\end{theorem}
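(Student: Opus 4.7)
The plan is to reduce the determination of the free constants $C^{i}$ to a linear algebraic system arising from the integral condition in \eqref{La.002} and then to apply Cramer's rule, after establishing sharp asymptotics for the matrix entries and the right-hand side. Substituting the decomposition $u=\sum_{i=1}^{d(d+1)/2}C^{i}u_{i}+u_{0}$ into the third line of \eqref{La.002} and integrating by parts via \eqref{Le2.01222}, using $u_{i}=0$ on $\partial D$ and $\mathcal{L}_{\lambda,\mu}u_{i}=0$, recasts that condition as
\begin{equation*}
\sum_{i=1}^{\frac{d(d+1)}{2}}a_{ij}\,C^{i}=Q_{j}[\varphi],\qquad j=1,\ldots,\frac{d(d+1)}{2},
\end{equation*}
where $a_{ij}:=\int_{\Omega}(\mathbb{C}^{0}e(u_{i}),e(u_{j}))\,dx$ and $Q_{j}[\varphi]:=\int_{\partial D_{1}}\frac{\partial u_{0}}{\partial\nu_{0}}\big|_{+}\cdot\psi_{j}$; the matrix $A=(a_{ij})$ is symmetric by \eqref{symm}, while $Q_{j}[\varphi]$, after a second integration by parts using $e(\psi_{j})=0$ and $\mathcal{L}_{\lambda,\mu}u_{0}=0$, can be rewritten as an integral over $\partial D$, which together with $\varphi(0)=0$ shows that $Q_{j}[\varphi]$ stays uniformly bounded in $\varepsilon$ and converges to $Q_{j}^{\ast}[\varphi]$.

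Next I would compute sharp asymptotics of each $a_{ij}$. Using Corollary~\ref{thm86} to replace $\nabla u_{i}$ by the explicit $\nabla\bar{u}_{i}$ inside $\Omega_{R}$, together with a standard energy estimate away from the thin gap, each $a_{ij}$ reduces to an integral of $(\mathbb{C}^{0}e(\psi_{i}\bar{v}),e(\psi_{j}\bar{v}))$ over $\Omega_{R}$ plus an $O(1)$ remainder. Condition $(\mathbf{S1})$ gives $\delta(x')=\varepsilon+\tau|x'|^{1+\alpha}+O(|x'|^{1+\alpha+\beta})$, and the substitution $x'=\varepsilon^{1/(1+\alpha)}y'$ converts $\int_{|x'|\le R}\delta^{-1}\,dx'$ into a Gamma-function integral producing exactly $\mathcal{M}_{\alpha,\tau}\varepsilon^{-\alpha/(1+\alpha)}$ as in \eqref{zwzh001}. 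Combined with the even symmetry of $h_{1}-h$ in each $x_{k}$ (which kills the leading off-diagonal translational contributions) and the anisotropy factor $\mathcal{L}_{d}^{i}$ arising from $\mathbb{C}^{0}$ applied to $e(\psi_{i}\bar{v})$, this yields for translational indices $i\in\{1,\ldots,d\}$
\begin{equation*}
a_{ij}=\delta_{ij}\,\mathcal{L}_{d}^{i}\,\mathcal{M}_{\alpha,\tau}\,\varepsilon^{-\frac{\alpha}{1+\alpha}}(1+o(1))+O(1),
\end{equation*}
while for $i,j$ both rotational $a_{ij}\to a_{ij}^{\ast}$, and the mixed terms are of lower order (bounded, and converging to $a_{ij}^{\ast}$ whenever the latter is defined).

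For $d\ge 3$ every $a_{ij}^{\ast}$ is finite, so $A=\mathbb{A}^{\ast}+O(\varepsilon^{\sigma})$ and the hypothesis $\det\mathbb{F}_{i}^{\ast}[\varphi]\neq 0$ forces $\det\mathbb{A}^{\ast}\neq 0$; Cramer's rule then yields the stated formula for $C^{i}$ with remainder $O(\bar{\varepsilon}(\alpha,d))$. In two dimensions only $a_{13},a_{23},a_{33},Q_{1},Q_{2},Q_{3}$ stay bounded, $a_{11},a_{22}\sim\varepsilon^{-\alpha/(1+\alpha)}$, and $a_{12}=o(\varepsilon^{-\alpha/(1+\alpha)})$ by symmetry. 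A direct expansion of Cramer's rule gives $\det A=a_{11}a_{22}a_{33}^{\ast}(1+o(1))$, and the cofactor defining $C^{1}$ has leading part $a_{22}(Q_{1}^{\ast}[\varphi]a_{33}^{\ast}-Q_{3}^{\ast}[\varphi]a_{13}^{\ast})=a_{22}\det\mathbb{B}_{1}^{\ast}[\varphi]$; dividing produces $C^{1}=\det\mathbb{B}_{1}^{\ast}[\varphi]\,\varepsilon^{\alpha/(1+\alpha)}/(\mathcal{L}_{2}^{1}\mathcal{M}_{\alpha,\tau}a_{33}^{\ast})\cdot(1+o(1))$, and symmetrically for $C^{2}$. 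The last equation of the system then yields $C^{3}=Q_{3}^{\ast}[\varphi]/a_{33}^{\ast}\cdot(1+o(1))$.

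The main obstacle is the precise tracking of the remainder rates $\varepsilon(\alpha,\beta)$ in \eqref{ZWZHAO01} and $\bar{\varepsilon}(\alpha,d)$ in \eqref{NZKL001}. The pointwise bound supplied by Corollary~\ref{thm86} is only $O(\delta^{-1/(1+\alpha)})$, which is not much smaller than the leading singularity $\delta^{-1}$ of $\nabla\bar{u}_{i}$; balancing this error against the $O(|x'|^{1+\alpha+\beta})$ correction to $\delta$ from $(\mathbf{S1})$ and against the anisotropy in the thin-gap integrals between the $(d-1)$-dimensional cross-section and the $x_{d}$-direction produces the three competing exponents inside the minimum in \eqref{ZWZHAO01}. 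In two dimensions an extra subtlety is that the Cramer computation must reproduce the exact determinantal combinations defining $\mathbb{B}_{i}^{\ast}[\varphi]$, so one must keep the subleading bounded contributions of $a_{i3}$ and the rate $Q_{j}[\varphi]\to Q_{j}^{\ast}[\varphi]$ together with the leading divergent terms.
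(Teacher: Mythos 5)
Your overall strategy matches the paper's: you reduce the fourth boundary condition of \eqref{La.002} to the linear system $\sum_i a_{ij}C^i = Q_j[\varphi]$, establish asymptotics of the entries $a_{ij}$ and the right-hand sides $Q_j[\varphi]$, and then apply Cramer's rule. That part is correct and is exactly how the paper proceeds in Section~\ref{SEC005}.

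However, there is a genuine gap at the step you yourself flag as ``the main obstacle'': your plan to ``use Corollary~\ref{thm86} to replace $\nabla u_{i}$ by $\nabla\bar{u}_{i}$ inside $\Omega_{R}$, together with a standard energy estimate away from the thin gap'' only yields, as you write, ``an $O(1)$ remainder.'' That is far too weak. To obtain the theorem one needs the \emph{precise convergence rates} $a_{ij}=a_{ij}^{\ast}+O(\bar{\varepsilon}(\alpha,d))$ and $Q_{j}[\varphi]=Q_{j}^{\ast}[\varphi]+O(\varepsilon^{\sigma})$, with $\sigma$'s that must be small but positive (see \eqref{NZKL001} and Lemma~\ref{KM323}), and an $O(1)$ or even $o(1)$ error there is not enough to extract the $(1+O(\bar{\varepsilon}(\alpha,d)))$ and $(1+O(\varepsilon(\alpha,\beta)))$ factors via Cramer's rule. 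The missing ingredient — and the heart of the paper's Lemmas~\ref{KM323} and~\ref{lemmabc} — is a direct comparison between $u_{i}$ and the touching-limit solution $u_{i}^{\ast}$ (not just between $u_{i}$ and $\bar{u}_{i}$): first a pointwise bound of the form $|u_{i}-u_{i}^{\ast}|\le C\varepsilon^{\alpha/(1+2\alpha)}$ outside a small cylinder $\mathcal{C}_{\varepsilon^{1/(1+2\alpha)}}$, obtained from the boundary data on $\partial(D\setminus(\overline{D_1\cup D_1^{\ast}\cup\mathcal{C}}))$ and the maximum principle for the Lam\'{e} system, then upgraded to $|\nabla(u_{i}-u_{i}^{\ast})|\le C\varepsilon^{(1+\alpha)\bar\gamma}$ via rescaled elliptic estimates and interpolation. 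Only with this comparison can one split the energy integral defining $a_{ij}$ into the three regions $\Omega\setminus\Omega_{R}$, $\Omega_{\varepsilon^{\bar\gamma}}$, $\Omega_{R}\setminus\Omega_{\varepsilon^{\bar\gamma}}$ — using $\bar u_i$ only in the innermost region and $u_i^{\ast}$ in the others — and balance the competing error exponents by the specific choice $\bar\gamma=\frac{\alpha^{2}}{2(1+2\alpha)(1+\alpha)^{2}}$. Without this $u_{i}$-versus-$u_{i}^{\ast}$ comparison, your scheme cannot produce the remainder rates asserted in the theorem.

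A smaller issue: your interim display $a_{ij}=\delta_{ij}\mathcal{L}_d^i\mathcal{M}_{\alpha,\tau}\varepsilon^{-\alpha/(1+\alpha)}(1+o(1))+O(1)$ for translational indices is only correct for $d=2$; for $d\ge 3$ the $(d-1)$-dimensional integral $\int_{B_R'}\delta^{-1}\,dx'$ converges and no $\mathcal{M}_{\alpha,\tau}\varepsilon^{-\alpha/(1+\alpha)}$ factor appears, as your own sentence ``for $d\ge 3$ every $a_{ij}^{\ast}$ is finite'' already implies; the two statements should be reconciled.
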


Combining the results above, we immediately give the proofs of Theorems \ref{Lthm066} and \ref{Lthm06666}.
\begin{proof}[Proofs of Theorems \ref{Lthm066} and \ref{Lthm06666}.]
In view of decomposition \eqref{Le2.015}, it follows from Theorem \ref{lem89999}, Corollary \ref{thm86} and Theorem \ref{OMG123} that

$(i)$ if $d=2$, then
\begin{align*}
\nabla u=&\sum^{2}_{i=1}\frac{\det\mathbb{B}_{i}^{\ast}[\varphi]}{a_{33}^{\ast}}\frac{\varepsilon^{\frac{\alpha}{1+\alpha}}(1+O(\varepsilon(\alpha,\beta)))}{\mathcal{L}_{2}^{i}\mathcal{M}_{\alpha,\tau}}(\nabla\bar{u}_{i}+O(\delta^{-\frac{1}{1+\alpha}}))\notag\\
&+\frac{Q_{3}^{\ast}[\varphi]}{a_{33}^{\ast}}(1+O(\varepsilon^{\frac{\alpha}{2(1+2\alpha)}}))(\nabla\bar{u}_{3}+O(1))+\nabla\bar{u}_{0}+O(1)\|\varphi\|_{C^{1}(\partial D)}\notag\\
=&\sum^{2}_{i=1}\frac{\det\mathbb{B}_{i}^{\ast}[\varphi]}{a_{33}^{\ast}}\frac{\varepsilon^{\frac{\alpha}{1+\alpha}}(1+O(\varepsilon(\alpha,\beta)))}{\mathcal{L}_{2}^{i}\mathcal{M}_{\alpha,\tau}}\nabla\bar{u}_{i}+\frac{Q_{3}^{\ast}[\varphi]}{a_{33}^{\ast}}(1+O(\varepsilon^{\frac{\alpha}{2(1+2\alpha)}}))\nabla\bar{u}_{3}\notag\\
&+\nabla\bar{u}_{0}+O(1)\delta^{-\frac{1-\alpha}{1+\alpha}}\|\varphi\|_{C^{1}(\partial D)};
\end{align*}

$(ii)$ if $d\geq3$, then
\begin{align*}
\nabla u=&\sum^{d}_{i=1}\frac{\det\mathbb{F}_{i}^{\ast}[\varphi]}{\det\mathbb{A}^{\ast}}(1+O(\varepsilon^{\frac{\alpha^{2}}{2(1+2\alpha)(1+\alpha)^{2}}}))(\nabla\bar{u}_{i}+O(\delta^{-\frac{1}{1+\alpha}}))\notag\\
&+\sum^{\frac{d(d+1)}{2}}_{i=d+1}\frac{\det\mathbb{F}_{i}^{\ast}[\varphi]}{\det\mathbb{A}^{\ast}}(1+O(\varepsilon^{\frac{\alpha^{2}}{2(1+2\alpha)(1+\alpha)^{2}}}))(\nabla\bar{u}_{i}+O(1))+\nabla\bar{u}_{0}+O(1)\|\varphi\|_{C^{1}(\partial D)}\notag\\
=&\sum_{i=1}^{\frac{d(d+1)}{2}}\frac{\det\mathbb{F}_{i}^{\ast}[\varphi]}{\det\mathbb{A}^{\ast}}(1+O(\varepsilon^{\frac{\alpha^{2}}{2(1+2\alpha)(1+\alpha)^{2}}}))\nabla\bar{u}_{i}+\nabla\bar{u}_{0}+O(1)\delta^{-\frac{1}{1+\alpha}}\|\varphi\|_{C^{1}(\partial D)}.
\end{align*}

Consequently, we accomplish the proofs of Theorems \ref{Lthm066} and \ref{Lthm06666}.

\end{proof}

\section{Proof of Theorem \ref{OMG123}}\label{SEC005}
Recalling decomposition (\ref{Le2.015}) and utilizing the fourth line of (\ref{La.002}), we arrive at
\begin{align}\label{Le3.078}
\sum^{\frac{d(d+1)}{2}}_{i=1}C^{i}a_{ij}=Q_{j}[\varphi],\quad j=1,2,...,\frac{d(d+1)}{2},
\end{align}
where, for $i,j=1,2,...,\frac{d(d+1)}{2}$,
\begin{align}\label{AKDM001}
a_{ij}:=-\int_{\partial D_{1}}\frac{\partial u_{i}}{\partial\nu_{0}}\Big|_{+}\cdot\psi_{j},\quad Q_{j}[\varphi]:=\int_{\partial D_{1}}\frac{\partial u_{0}}{\partial\nu_{0}}\Big|_{+}\cdot\psi_{j}.
\end{align}
From \eqref{Le3.078}, we reduce the proof of Theorem \ref{OMG123} to the establishments of the asymptotic expressions of $a_{ij}$ and $Q_{j}[\varphi]$.

\subsection{Asymptotics of $Q_{j}[\varphi]$, $j=1,2,...,\frac{d(d+1)}{2}$.}\label{subsec31}

Denote the unit outer normal of $D$ near the origin by
\begin{align}\label{normal}
\nu:=(\nu_{1},...,\nu_{d})=\left(\frac{\nabla_{x'}h}{\sqrt{1+|\nabla_{x'}h|^{2}}},\frac{-1}{\sqrt{1+|\nabla_{x'}h|^{2}}}\right).
\end{align}
\begin{lemma}\label{KM323}
Assume as in Theorem  \ref{Lthm066}. Then, for a sufficiently small $\varepsilon>0$,

\begin{align}\label{AAPP321}
Q_{j}[\varphi]&=Q_{j}^{\ast}[\varphi]+O(1)
\begin{cases}
\varepsilon^{\frac{(d-1-\alpha)\alpha}{d(1+2\alpha)}},&j=1,2,...,d,\\
\varepsilon^{\frac{(d-\alpha)(1+\alpha)}{(d+1)(1+2\alpha)}},&j=d+1,...,\frac{d(d+1)}{2}.
\end{cases}
\end{align}

\end{lemma}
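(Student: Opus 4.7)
The plan is as follows. First, I reduce the comparison of $Q_j[\varphi]$ and $Q_j^*[\varphi]$ to a comparison over $\partial D$. A double application of the integration by parts identity (\ref{Le2.01222}) on $\Omega$ with the pair $(u_0, u_j)$, combined with $\mathcal{L}_{\lambda,\mu}u_0=\mathcal{L}_{\lambda,\mu}u_j=0$, the Dirichlet data $u_0=\varphi$, $u_j=0$ on $\partial D$ and $u_0=0$, $u_j=\psi_j$ on $\partial D_1$, and the symmetry of $\mathbb{C}^0$, yields
\begin{equation*}
Q_j[\varphi] = \int_{\Omega}(\mathbb{C}^0 e(u_0), e(u_j))\,dx = \int_{\partial D}\frac{\partial u_j}{\partial\nu_0}\Big|_{+}\cdot\varphi\, dS,
\end{equation*}
and the identical argument in $\Omega^*$ gives $Q_j^*[\varphi] = \int_{\partial D}(\partial u_j^*/\partial\nu_0)|_{+}\cdot\varphi\, dS$. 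Consequently, proving \eqref{AAPP321} reduces to producing a quantitative bound on $\nabla(u_j-u_j^*)$ on $\partial D$.

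Second, I compare $u_j$ with $u_j^*$ via an energy argument. The inclusions $D_1$ and $D_1^*$ differ only by the shift $(0',\varepsilon)$, so outside a neighborhood of the origin both boundaries are $C^{1,\alpha}$-close. I extend $u_j^*$ across $\overline{D_1^*}\setminus\overline{D_1}$ into $\Omega$ by gluing it to the rigid motion $\psi_j$ through a cutoff near $\partial D_1$, producing $\tilde u_j^*\in H^1(\Omega;\mathbb{R}^d)$ that agrees with $u_j$ on $\partial\Omega$. Then $W:=u_j-\tilde u_j^*$ solves a Lam\'{e} system with right-hand side supported only in the thin gap $\Omega_R$ and of amplitude controlled by the $\varepsilon$-size of the shift, together with homogeneous Dirichlet data on $\partial\Omega$. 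Running an iteration with respect to the energy as in the proof of Theorem \ref{lem89999}, combined with the boundary Schauder estimate of Lemma \ref{CL001}, produces a bound of the form $\|\nabla(u_j-u_j^*)\|_{L^{\infty}(\partial D\setminus B_{\rho})}\leq C\varepsilon^{\gamma}\rho^{-m}$ for explicit exponents $\gamma,m>0$ depending on $\alpha,d,j$.

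Third, I split $\partial D=(\partial D\cap B_{\rho})\cup(\partial D\setminus B_{\rho})$ and optimize $\rho=\rho(\varepsilon)$. On the far portion, the preceding $C^1$ estimate together with $\|\varphi\|_{C^{1,\alpha}(\partial D)}$ yields a contribution of order $\varepsilon^{\gamma}\rho^{-m}$. On the near portion, the normalization $\varphi(0)=0$ gives $|\varphi(x)|\leq C\|\varphi\|_{C^1(\partial D)}|x|$, while the global gradient bound $|\nabla u_j|,|\nabla u_j^*|\leq C\delta^{-1/(1+\alpha)}$ from Corollary \ref{thm86} converts the resulting surface integral into a positive power of $\rho$. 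For $j>d$ the rigid motion $\psi_j=x_k e_l-x_l e_k$ vanishes linearly at the origin, so the integrand acquires an extra factor of $|x|$ via the structure of the auxiliary function $\bar u_j$ of \eqref{GWQ}, which accounts for the improved exponent $\varepsilon^{(d-\alpha)(1+\alpha)/((d+1)(1+2\alpha))}$ as compared to $\varepsilon^{(d-1-\alpha)\alpha/(d(1+2\alpha))}$ for the translational indices $j\leq d$. Balancing the two contributions in $\rho$ yields \eqref{AAPP321}.

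The main obstacle is the second step: constructing the extension $\tilde u_j^*$ and running the Campanato-type iteration carefully enough to track the $\varepsilon$-size of the shift between $\partial D_1$ and $\partial D_1^*$ in the final $C^1$ estimate, while preserving the distinct vanishing behavior of $\psi_j$ at the origin across the cases $j\leq d$ and $j>d$. Once this quantitative comparison is in hand, the boundary split and optimization of the third step are comparatively routine bookkeeping.
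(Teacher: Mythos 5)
Your first step — deriving
\begin{align*}
Q_{j}[\varphi]-Q^{\ast}_{j}[\varphi]=\int_{\partial D}\frac{\partial(u_{j}-u_{j}^{\ast})}{\partial\nu_{0}}\Big|_{+}\cdot\varphi
\end{align*}
from \eqref{Le2.01222} and the symmetry of $\mathbb{C}^{0}$ — matches the paper exactly, as does the overall strategy of splitting $\partial D$ into a near part $\partial D\cap\mathcal{C}_{\rho}$ and a far part and optimizing $\rho$ in $\varepsilon$. The genuine gap is in your second step, and it is not merely a matter of bookkeeping.

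You propose to extend $u_{j}^{\ast}$ to $\tilde u_{j}^{\ast}\in H^{1}(\Omega;\mathbb{R}^{d})$ and run the Campanato energy iteration of Theorem \ref{lem89999} on $W=u_{j}-\tilde u_{j}^{\ast}$. This fails outright in $d=2$ for $j=1,2$: by \eqref{LGA01} one has $|\partial_{d}u_{j}^{\ast}|\sim|x'|^{-(1+\alpha)}$ near the touching point, so $\int_{\Omega^{\ast}_{R}}|\nabla u_{j}^{\ast}|^{2}\gtrsim\int_{0}^{R}s^{-(1+\alpha)}\,ds=\infty$; indeed, the paper explicitly records that $a_{jj}^{\ast}=\int_{\Omega^{\ast}}(\mathbb{C}^{0}e(u_{j}^{\ast}),e(u_{j}^{\ast}))$ is \emph{not} finite for $j\leq d$ in two dimensions (see the remark following \eqref{l03.001} and Lemma \ref{lemmabc}). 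There is therefore no $H^{1}$ extension $\tilde u_{j}^{\ast}$ to compare against, and no energy to iterate. Even in $d\geq3$ the claim that the right-hand side of the equation for $W$ has ``amplitude controlled by the $\varepsilon$-size of the shift'' is not tenable near the origin, where $\nabla u_{j}^{\ast}$ itself carries a fixed, $\varepsilon$-independent singularity. The paper sidesteps all of this by working pointwise: it estimates $|u_{j}-u_{j}^{\ast}|$ on the boundary of the common domain $D\setminus(\overline{D_{1}\cup D_{1}^{\ast}\cup\mathcal{C}_{\varepsilon^{1/(1+2\alpha)}}})$ using (\ref{Lw3.003})--(\ref{Lw3.005}) and Corollary \ref{thm86}, propagates that bound to the interior via the \emph{maximum principle for the Lam\'e system} from \cite{MMN2007}, and then applies standard interior/boundary Schauder estimates to obtain a $C^{1}$ bound away from the origin. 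No energy argument is used for the comparison, which is precisely what makes the estimate degenerate-domain-proof.

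A second, independent error appears in your near-part estimate: you invoke ``the global gradient bound $|\nabla u_{j}|,|\nabla u_{j}^{\ast}|\leq C\delta^{-1/(1+\alpha)}$ from Corollary \ref{thm86}.'' Corollary \ref{thm86} gives $\nabla u_{j}=\nabla\bar u_{j}+O(\delta^{-1/(1+\alpha)})$, and the leading term $\nabla\bar u_{j}$ is of order $\delta^{-1}\gg\delta^{-1/(1+\alpha)}$ on $\partial D\cap\Omega_{R}$. Crudely bounding $|\nabla u_{j}|$ near the origin by $\delta^{-1}$ does not yield a useful rate; the paper instead decomposes $u_{j}-u_{j}^{\ast}=(\bar u_{j}-\bar u_{j}^{\ast})+(w_{j}-w_{j}^{\ast})$ in Step 2, exploits the cancellation encoded in \eqref{Lw3.003} for the explicit auxiliary pair (whose tangential gradient difference is only $O(|x'|^{-1})$ and whose normal difference carries an extra factor $\varepsilon$), and bounds $\nabla(w_{j}-w_{j}^{\ast})$ by $O(|x'|^{-1})$ via \eqref{Lw3.016}. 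That cancellation, not the raw size of $\nabla u_{j}$, is what produces the exponents in \eqref{AAPP321}. You would need to restructure the near-part argument along these lines for the claimed rates to come out.
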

\begin{remark}
The difference on the convergence rate between $Q_{j}[\varphi]$ and $Q_{j}^{\ast}[\varphi]$ in the case of $j=1,2,...,d$ and $j=d+1,...,\frac{d(d+1)}{2}$ arises from the difference of iterate results in \eqref{Le2.025}.
\end{remark}

\begin{proof}
We only give the proof of (\ref{AAPP321}) in the case of $j=1,2,...,d$, because the case of $j=d+1,...,\frac{d(d+1)}{2}$ is almost the same to the former with a slight modification. Recalling the definitions of $u_{0}$ and $u_{j}$, $j=1,2,...,d$, it follows from (\ref{Le2.01222}) that
\begin{align*}
Q_{j}[\varphi]-Q^{\ast}_{j}[\varphi]=\int_{\partial D}\frac{\partial(u_{j}-u_{j}^{\ast})}{\partial\nu_{0}}\Big|_{+}\cdot\varphi(x),
\end{align*}
where $u_{j}^{\ast}$ satisfies equation \eqref{l03.001}.

Introduce a family of auxiliary functions as follows: for $j=1,2,...,d$,
$$\bar{u}_{j}^{\ast}=\bar{v}^{\ast}e_{j},$$
where $\bar{v}^{\ast}$ satisfies $\bar{v}^{\ast}=1$ on $\partial D_{1}^{\ast}\setminus\{0\}$, $\bar{v}^{\ast}=0$ on $\partial D$, and
$$\bar{v}^{\ast}=\frac{x_{d}-h(x')}{h_{1}(x')-h(x')},\quad\mathrm{in}\;\,\Omega_{2R}^{\ast},\quad\;\,\|\bar{v}^{\ast}\|_{C^{2}(\Omega^{\ast}\setminus\Omega_{R}^{\ast})}\leq C,$$
where $\Omega^{\ast}_{r}:=\Omega^{\ast}\cap\{|x'|<r\},$ $0<r\leq2R$. In light of ({\bf{\em H3}}), we derive that for $x\in\Omega_{R}^{\ast}$,
\begin{align}\label{Lw3.003}
|\nabla_{x'}(\bar{u}_{j}-\bar{u}^{\ast }_{j})|\leq\frac{C}{|x'|},\quad|\partial_{x_{d}}(\bar{u}_{j}-\bar{u}^{\ast}_{j})|\leq\frac{C\varepsilon}{|x'|^{1+\alpha}(\varepsilon+|x'|^{1+\alpha})}.
\end{align}
Applying Corollary \ref{thm86} to (\ref{l03.001}), it follows that for $x\in\Omega_{R}^{\ast}$,
\begin{align}\label{Lw3.005}
|\nabla(u_{j}^{\ast}-\bar{u}_{j}^{\ast})|\leq\frac{C}{|x'|}.
\end{align}
For $0<r<R$, define
\begin{align*}
\mathcal{C}_{r}:=\left\{x\in\mathbb{R}^{d}\Big|\;|x'|<r,\,\frac{1}{2}\min_{|x'|\leq r}h(x')\leq x_{d}\leq\varepsilon+2\max_{|x'|\leq r}h_{1}(x')\right\}.
\end{align*}
We divide into two substeps to estimate the difference $|Q_{j}[\varphi]-Q^{\ast}_{j}[\varphi]|$ in the following.

{\bf Step 1.} Note that $u_{j}-u_{j}^{\ast}$ solves
\begin{align*}
\begin{cases}
\mathcal{L}_{\lambda,\mu}(u_{j}-u_{j}^{\ast})=0,&\mathrm{in}\;\,D\setminus(\overline{D_{1}\cup D_{1}^{\ast}}),\\
u_{j}-u_{j}^{\ast}=\psi_{j}-u_{j}^{\ast},&\mathrm{on}\;\,\partial D_{1}\setminus D_{1}^{\ast},\\
u_{j}-u_{j}^{\ast}=u_{j}-\psi_{j},&\mathrm{on}\;\,\partial D_{1}^{\ast}\setminus(D_{1}\cup\{0\}),\\
u_{j}-u_{j}^{\ast}=0,&\mathrm{on}\;\,\partial D.
\end{cases}
\end{align*}
First, we estimate $|u_{j}-u_{j}^{\ast}|$ on $\partial(D_{1}\cup D_{1}^{\ast})\setminus\mathcal{C}_{\varepsilon^{\gamma}}$, where $0<\gamma<1$ to be determined later. By the standard elliptic estimates, we have
$$|\partial_{x_{d}}u_{j}^{\ast}|\leq C,\quad\;\,\mathrm{in}\;\,\Omega^{\ast}\setminus\Omega^{\ast}_{R},$$
which reads that
\begin{align}\label{Lw3.007}
|u_{j}-u_{j}^{\ast}|\leq C\varepsilon,\quad\;\,\mathrm{for}\;\,x\in\partial D_{1}\setminus D_{1}^{\ast}.
\end{align}
From \eqref{Le2.025}, we see
\begin{align}\label{Lw3.008}
|u_{j}-u_{j}^{\ast}|\leq C\varepsilon^{1-(1+\alpha)\gamma},\quad\;\,\mathrm{on}\;\,\partial D_{1}^{\ast}\setminus(D_{1}\cup\mathcal{C}_{\varepsilon^{\gamma}}).
\end{align}
For $x\in\Omega_{R}^{\ast}\cap\{|x'|=\varepsilon^{\gamma}\}$, it follows from \eqref{Le2.025} and (\ref{Lw3.003})--(\ref{Lw3.005}) that
\begin{align*}
|\partial_{x_{d}}(u_{j}-u_{j}^{\ast})|\leq&|\partial_{x_{d}}(u_{j}-\bar{u}_{j})|+|\partial_{x_{d}}(\bar{u}_{j}-\bar{u}_{j}^{\ast})|+|\partial_{x_{d}}(u_{j}^{\ast}-\bar{u}_{j}^{\ast})|\\
\leq&C\left(\frac{1}{\varepsilon^{2(1+\alpha)\gamma-1}}+\frac{1}{\varepsilon^{\gamma}}\right).
\end{align*}
This, together with the fact that $\bar{u}_{j}-\bar{u}_{j}^{\ast}=0$ on $\partial D$, yields that
\begin{align}\label{Lw3.009}
|(u_{j}-u_{j}^{\ast})(x',x_{d})|=&|(u_{j}-u_{j}^{\ast})(x',x_{d})-(u_{j}-u_{j}^{\ast})(x',h(x'))|\notag\\
\leq&C\big(\varepsilon^{1-(1+\alpha)\gamma}+\varepsilon^{\alpha\gamma}\big).
\end{align}
Pick $\gamma=\frac{1}{1+2\alpha}$. Then combining (\ref{Lw3.007})--(\ref{Lw3.009}), we have
$$|u_{j}-u_{j}^{\ast}|\leq C\varepsilon^{\frac{\alpha}{1+2\alpha}},\quad\;\,\mathrm{on}\;\,\partial\big(D\setminus\big(\overline{D_{1}\cup D_{1}^{\ast}\cup\mathcal{C}_{\varepsilon^{\frac{1}{1+2\alpha}}}}\big)\big),$$
which, in combination with the maximum principle for Lam\'{e} system in \cite{MMN2007}, reads that
\begin{align}\label{ZZW0101}
|u_{j}-u_{j}^{\ast}|\leq C\varepsilon^{\frac{\alpha}{1+2\alpha}},\quad\;\,\mathrm{in}\;\,D\setminus\big(\overline{D_{1}\cup D_{1}^{\ast}\cup\mathcal{C}_{\varepsilon^{\frac{1}{1+2\alpha}}}}\big).
\end{align}
Therefore, utilizing the standard interior and boundary estimates for Lam\'{e} system, we know that for any $\frac{1}{(1+\alpha)(1+2\alpha)}<\tilde{\gamma}<\frac{1}{1+2\alpha}$,
$$|\nabla(u_{j}-u_{j}^{\ast})|\leq C\varepsilon^{(1+\alpha)\tilde{\gamma}-\frac{1}{1+2\alpha}},\quad\;\,\mathrm{on}\;\,D\setminus\big(\overline{D_{1}\cup D_{1}^{\ast}\cup\mathcal{C}_{\varepsilon^{\frac{1}{1+2\alpha}-\tilde{\gamma}}}}\big),$$
which implies that
\begin{align}\label{Lw3.010}
|\mathcal{A}^{\mathrm{I}}|:=\left|\int_{\partial D\setminus\mathcal{C}_{\varepsilon^{\frac{1}{1+2\alpha}-\tilde{\gamma}}}}\frac{\partial(u_{j}-u_{j}^{\ast})}{\partial\nu_{0}}\Big|_{+}\cdot\varphi(x)\right|\leq C\varepsilon^{(1+\alpha)\tilde{\gamma}-\frac{1}{1+2\alpha}}.
\end{align}

{\bf Step 2.} We now estimate the residual part as follows:
\begin{align*}
\mathcal{A}^{\mathrm{II}}:=&\int_{\partial D\cap\mathcal{C}_{\varepsilon^{\frac{1}{1+2\alpha}-\tilde{\gamma}}}}\frac{\partial(u_{j}-u_{j}^{\ast})}{\partial\nu_{0}}\Big|_{+}\cdot\varphi(x)\\
=&\int_{\partial D\cap\mathcal{C}_{\varepsilon^{\frac{1}{1+2\alpha}-\tilde{\gamma}}}}\frac{\partial(\bar{u}_{j}-\bar{u}_{j}^{\ast})}{\partial\nu_{0}}\Big|_{+}\cdot\varphi(x)+\int_{\partial D\cap\mathcal{C}_{\varepsilon^{\frac{1}{1+2\alpha}-\tilde{\gamma}}}}\frac{\partial(w_{j}-w_{j}^{\ast})}{\partial\nu_{0}}\Big|_{+}\cdot\varphi(x)\\
=&:\mathcal{A}_{1}^{\mathrm{II}}+\mathcal{A}_{2}^{\mathrm{II}},
\end{align*}
where $w_{j}=u_{j}-\bar{u}_{j}$, $w_{j}^{\ast}=u_{j}^{\ast}-\bar{u}_{j}^{\ast}$. With regard to $\mathcal{A}_{1}^{\mathrm{II}}$, on one hand, for $j=1,...,d-1$, we decompose it into two parts as follows:
\begin{align*}
\mathcal{A}_{11}^{\mathrm{II}}=&\int_{\partial D\cap\mathcal{C}_{\varepsilon^{\frac{1}{1+2\alpha}-\tilde{\gamma}}}}\bigg\{\lambda\sum^{d}_{i=1}\partial_{x_{j}}(\bar{u}^{j}_{j}-\bar{u}^{\ast j}_{j})\nu_{i}\varphi^{i}(x)+\\
&\qquad+\mu\sum^{d-1}_{i=1}\partial_{x_{i}}(\bar{u}^{j}_{j}-\bar{u}^{\ast j}_{j})[\nu_{j}\varphi^{i}(x)+\nu_{i}\varphi^{j}(x)]+\mu\partial_{x_{d}}(\bar{u}^{j}_{j}-\bar{u}^{\ast j}_{j})\nu_{j}\varphi^{d}(x)\bigg\},\\
\mathcal{A}_{12}^{\mathrm{II}}=&\int_{\partial D\cap\mathcal{C}_{\varepsilon^{\frac{1}{1+2\alpha}-\tilde{\gamma}}}}\mu\partial_{x_{d}}(\bar{u}^{j}_{j}-\bar{u}^{\ast j}_{j})\nu_{d}\varphi^{j}(x),
\end{align*}
where $\nu_{i}$, $i=1,2,...,d$ are defined in \eqref{normal}. From (\ref{Lw3.003}) and the Taylor expansion of $\varphi^{i}(x)$, we see
\begin{align*}
|\mathcal{A}_{11}^{\mathrm{II}}|\leq C\int_{\partial D\cap\mathcal{C}_{\varepsilon^{\frac{1}{1+2\alpha}-\tilde{\gamma}}}}\|\varphi\|_{C^{1}(\partial D)}\leq C\|\varphi\|_{C^{1}(\partial D)}\varepsilon^{(\frac{1}{1+2\alpha}-\tilde{\gamma})(d-1)},
\end{align*}
and
\begin{align*}
|\mathcal{A}_{12}^{\mathrm{II}}|\leq C\|\varphi\|_{C^{1}(\partial D)}\int_{0}^{\varepsilon^{\frac{1}{1+2\alpha}-\tilde{\gamma}}}s^{d-2-\alpha}\,ds\leq C\|\varphi\|_{C^{1}(\partial D)}\varepsilon^{(\frac{1}{1+2\alpha}-\tilde{\gamma})(d-1-\alpha)},
\end{align*}
which leads to that
\begin{align}\label{TCD001}
|\mathcal{A}_{1}^{\mathrm{II}}|\leq&C\|\varphi\|_{C^{1}(\partial D)}\varepsilon^{(\frac{1}{1+2\alpha}-\tilde{\gamma})(d-1-\alpha)}.
\end{align}
On the other hand, for $j=d$, we split $\mathcal{A}_{1}^{\mathrm{II}}$ into two parts as follows:
\begin{align*}
\mathcal{A}_{11}^{\mathrm{II}}=&\int_{\partial D\cap\mathcal{C}_{\varepsilon^{\frac{1}{1+2\alpha}-\tilde{\gamma}}}}\bigg\{\lambda\sum^{d-1}_{i=1}\partial_{x_{d}}(\bar{u}^{d}_{d}-\bar{u}^{\ast d}_{d})\nu_{i}\varphi^{i}(x)+\\
&\qquad+\mu\sum^{d-1}_{i=1}\partial_{x_{i}}(\bar{u}^{d}_{d}-\bar{u}^{\ast d}_{d})[\nu_{d}\varphi^{i}(x)+\nu_{i}\varphi^{d}(x)]\bigg\},\\
\mathcal{A}_{12}^{\mathrm{II}}=&\int_{\partial D\cap\mathcal{C}_{\varepsilon^{\frac{1}{1+2\alpha}-\tilde{\gamma}}}}(\lambda+2\mu)\partial_{x_{d}}(\bar{u}^{d}_{d}-\bar{u}^{\ast d}_{d})\nu_{d}\varphi^{d}(x).
\end{align*}
Using (\ref{Lw3.003}) and the Taylor expansion of $\varphi^{i}(x)$ again, we deduce that \eqref{TCD001} still holds.

We next estimate $\mathcal{A}_{2}^{\mathrm{II}}$. Making use of Corollary \ref{thm86}, we derive that for $0<|x'|\leq R$,
\begin{align}\label{Lw3.016}
|\nabla w_{j}|\leq C(\varepsilon+|x'|^{1+\alpha})^{-\frac{1}{1+\alpha}},\quad|\nabla w_{j}^{\ast}|\leq\frac{C}{|x'|}.
\end{align}
By definition,
\begin{align*}
\mathcal{A}_{2}^{\mathrm{II}}=&\int_{\partial D\cap\mathcal{C}_{\varepsilon^{\frac{1}{1+2\alpha}-\tilde{\gamma}}}}\bigg\{\lambda\sum^{d}_{k,l=1}\partial_{x_{k}}(w_{j}^{k}-w_{j}^{\ast k})\nu_{l}\varphi^{l}(x)\\
&\quad+\mu\sum^{d}_{k,l=1}[\partial_{x_{l}}(w_{j}^{k}-w_{j}^{\ast k})+\partial_{x_{k}}(w_{j}^{l}-w_{j}^{\ast l})]\nu_{l}\varphi^{k}(x)\bigg\}.
\end{align*}
Making use of (\ref{Lw3.016}) and the Taylor expansion of $\varphi^{j}(x)$, we obtain
\begin{align}\label{Lw3.017}
|\mathcal{A}_{2}^{\mathrm{II}}|\leq C\int_{\partial D\cap\mathcal{C}_{\varepsilon^{\frac{1}{1+2\alpha}-\tilde{\gamma}}}}\|\varphi\|_{C^{1}(\partial D)}\leq C\|\varphi\|_{C^{1}(\partial D)}\varepsilon^{(\frac{1}{1+2\alpha}-\tilde{\gamma})(d-1)}.
\end{align}
Taking $\tilde{\gamma}=\frac{d-\alpha}{d(1+2\alpha)}$, it follows from \eqref{Lw3.010}--\eqref{TCD001} and \eqref{Lw3.017} that
\begin{align*}
|Q_{j}[\varphi]-Q^{\ast}_{j}[\varphi]|\leq C\varepsilon^{\frac{(d-1-\alpha)\alpha}{d(1+2\alpha)}},\quad j=1,2,...,d.
\end{align*}
Therefore, we complete the proof of Lemma \ref{KM323}.
\end{proof}

\subsection{Asymptotics of $a_{ij}$, $i,j=1,2,...,\frac{d(d+1)}{2}$.}
Multiplying the first line of (\ref{P2.005}) by $u_{i}$, it follows from integration by parts that for $i,j=1,2,...,\frac{d(d+1)}{2}$,
\begin{align*}
a_{ij}=\int_{\Omega}(\mathbb{C}^{0}e(u_{i}),e(u_{j})).
\end{align*}
For brevity, write
\begin{align}\label{APDN001}
\tilde{\varepsilon}(\alpha,\beta)=&
\begin{cases}
\varepsilon^{\frac{\beta}{1+\alpha}},&\alpha>\beta,\\
\varepsilon^{\frac{\alpha}{1+\alpha}}|\ln\varepsilon|,&\alpha=\beta,\\
\varepsilon^{\frac{\alpha}{1+\alpha}},&0<\alpha<\beta.
\end{cases}
\end{align}

\begin{lemma}\label{lemmabc}
Assume as above. Then, for a sufficiently small $\varepsilon>0$,

$(i)$ for $i=1,2,...d$, if $d=2$,
\begin{align}\label{zzw001}
a_{ii}=\mathcal{L}_{2}^{i}\mathcal{M}_{\alpha,\tau}\varepsilon^{-\frac{\alpha}{1+\alpha}}(1+O(\tilde{\varepsilon}(\alpha,\beta))),
\end{align}
and if $d\geq3$,
\begin{align}\label{zzw002}
a_{ii}=a_{ii}^{\ast}+O(\bar{\varepsilon}(\alpha,d)),
\end{align}
where $\bar{\varepsilon}(\alpha,d)$ and $\tilde{\varepsilon}(\alpha,\beta)$ are defined by \eqref{NZKL001} and \eqref{APDN001}, respectively.

$(ii)$ for $i=d+1,...,\frac{d(d+1)}{2}$,
\begin{align}\label{zzw003}
a_{ii}=a_{ii}^{\ast}+O(1)\varepsilon^{\frac{\alpha}{2(1+2\alpha)}};
\end{align}

$(iii)$ if $d=2$, for $i,j=1,2,i\neq j$, then
\begin{align}\label{LVZQ001}
a_{12}=a_{21}=O(1)|\ln\varepsilon|,
\end{align}
and if $d\geq3$, for $i,j=1,2,...,d,\,i\neq j$, then
\begin{align}\label{ADzc}
a_{ij}=a_{ji}=&a_{ij}^{\ast}+O(1)
\begin{cases}
\varepsilon^{\frac{\alpha^{2}}{2(1+2\alpha)(1+\alpha)^{2}}},&d=3,\\
\varepsilon^{\frac{\alpha^{2}}{2(1+2\alpha)(1+\alpha)}},&d\geq4,
\end{cases}
\end{align}
and if $d\geq2$, for $i=1,2,...,d,\,j=d+1,...,\frac{d(d+1)}{2},$ then
\begin{align}\label{LVZQ0011gdw}
a_{ij}=a_{ji}=&a_{ij}^{\ast}+O(1)
\begin{cases}
\varepsilon^{\frac{\alpha^{2}}{2(1+2\alpha)(1+\alpha)^{2}}},&d=2,\\
\varepsilon^{\frac{\alpha^{2}}{2(1+2\alpha)(1+\alpha)}},&d\geq3,
\end{cases}
\end{align}
and if $d\geq3$, for $i,j=d+1,...,\frac{d(d+1)}{2},\,i\neq j$, then
\begin{align}\label{zzw007}
a_{ij}=a_{ij}^{\ast}+O(1)\varepsilon^{\frac{\alpha}{2(1+2\alpha)}}.
\end{align}

\end{lemma}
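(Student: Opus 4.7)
\medskip

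\textbf{Plan.} The plan is to start from the identity $a_{ij}=\int_{\Omega}(\mathbb{C}^{0}e(u_{i}),e(u_{j}))$, write $u_{i}=\bar{u}_{i}+w_{i}$ with $\bar{u}_{i}$ from \eqref{GWQ}, and exploit the decomposition
\begin{align*}
a_{ij}=\int_{\Omega}(\mathbb{C}^{0}e(\bar{u}_{i}),e(\bar{u}_{j}))+\int_{\Omega}(\mathbb{C}^{0}e(\bar{u}_{i}),e(w_{j}))+\int_{\Omega}(\mathbb{C}^{0}e(w_{i}),e(\bar{u}_{j}))+\int_{\Omega}(\mathbb{C}^{0}e(w_{i}),e(w_{j})).
\end{align*}
From Corollary \ref{thm86} we have $|\nabla w_{i}|\le C\delta^{-1/(1+\alpha)}$ for $i=1,\dots,d$ and $|\nabla w_{i}|\le C$ for $i=d+1,\dots,\frac{d(d+1)}{2}$ inside $\Omega_{R}$, and standard elliptic regularity controls everything outside $\Omega_{R}$. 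Thus all three ``remainder'' integrals above can be estimated by integrating these bounds against $|\nabla\bar{u}_{i}|\lesssim|\psi_{i}|\delta^{-1}$ plus the residual terms on $\Omega\setminus\Omega_{R}$.

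\textbf{Step 1 (diagonal translation modes).} For $i=1,\dots,d$, one has $\psi_{i}=e_{i}$, so $\partial_{x_{d}}\bar{u}_{i}=e_{i}/\delta(x')$ in $\Omega_{R}$ and the leading integral is
\begin{align*}
\int_{\Omega_{R}}\mathcal{L}_{d}^{i}\,|\partial_{x_{d}}\bar{u}_{i}|^{2}\,dx=\mathcal{L}_{d}^{i}\int_{B_{R}'}\frac{dx'}{\delta(x')},
\end{align*}
(the remaining terms in $(\mathbb{C}^{0}e(\bar{u}_{i}),e(\bar{u}_{i}))$ being $O(|x'|^{\alpha}\delta^{-1})$ and thus lower order). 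Using (\textbf{S1}), substituting $x'=\varepsilon^{1/(1+\alpha)}y'$, and computing the one-dimensional Beta/Gamma integral produces the factor $\mathcal{M}_{\alpha,\tau}$ in dimension $d=2$, giving the divergent behavior $\mathcal{L}_{2}^{i}\mathcal{M}_{\alpha,\tau}\varepsilon^{-\alpha/(1+\alpha)}$; the error $O(\tilde\varepsilon(\alpha,\beta))$ comes from the $O(|x'|^{1+\alpha+\beta})$ correction in (\textbf{S1}) and the cross terms with $w_{i}$, which are controlled by $\int(\delta^{-1/(1+\alpha)})\cdot\delta^{-1}\,dx'$. For $d\ge 3$, the same integral $\int_{B_{R}'}\delta^{-1}\,dx'$ is uniformly bounded, so one instead compares $a_{ii}$ to the limit functional $a_{ii}^{\ast}$ from \eqref{KRD0} by the same scheme used in Lemma \ref{KM323}: write $u_{i}-u_{i}^{\ast}$, control it by $C\varepsilon^{\alpha/(1+2\alpha)}$ away from a small cylinder using the maximum principle of Meyers/Morrey/Nirenberg, and bound the integral over the cylinder via the gradient bounds \eqref{Lw3.016} --- the resulting rate $\bar\varepsilon(\alpha,d)$ from \eqref{NZKL001} reflects the dimensional balance between the shrinking cylinder radius and the sharper $\delta^{-1/(1+\alpha)}$ gradient growth.

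\textbf{Step 2 (rotation modes and off-diagonal terms).} For indices $i\ge d+1$, $|\psi_{i}(x)|\le C|x|$ near the origin, hence $|\nabla\bar{u}_{i}|$ in the gap is controlled by $|x'|\delta^{-1}+1$, which is bounded by the parity assumption ($h_{1}-h$ even in each $x_{k}$, $k<d$) after the leading odd contribution drops out. Thus $a_{ii}$ is bounded and compared to $a_{ii}^{\ast}$ by the same ``$u_{i}$ vs.\ $u_{i}^{\ast}$'' comparison argument, the slower rate $\varepsilon^{\alpha/(2(1+2\alpha))}$ in \eqref{zzw003} and \eqref{zzw007} being forced by the coarser $L^{\infty}$ comparison $|u_{i}-u_{i}^{\ast}|\le C\varepsilon^{\alpha/(1+2\alpha)}$ squared-and-square-rooted on the cylinder of radius $\varepsilon^{1/(1+2\alpha)}$. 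For the off-diagonal translation-translation case, the parity assumption makes $\int_{\Omega_{R}}(\mathbb{C}^{0}e(\bar{u}_{i}),e(\bar{u}_{j}))$ vanish to leading order when $i\ne j\in\{1,\dots,d\}$, so only the logarithmic boundary contribution (in $d=2$) or a convergent tail (in $d\ge 3$) survives; the mixed translation–rotation and rotation–rotation cases are handled similarly, with the extra factor $|x'|$ from the rotation mode improving the convergence rate further.

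\textbf{Main obstacle.} The bookkeeping of the error exponents --- in particular the precise rates $\bar\varepsilon(\alpha,d)$, $\tilde\varepsilon(\alpha,\beta)$, $\varepsilon^{\alpha^{2}/(2(1+2\alpha)(1+\alpha)^{2})}$ and $\varepsilon^{\alpha^{2}/(2(1+2\alpha)(1+\alpha))}$ --- will be the main technical difficulty. Each exponent arises by optimizing a cylinder radius $\varepsilon^{\gamma}$ that balances (a) the $L^{\infty}$ loss $\varepsilon^{1-(1+\alpha)\gamma}$ of $u_{i}-u_{i}^{\ast}$ on the side of the cylinder coming from \eqref{Le2.025}, (b) the vertical oscillation $\varepsilon^{\alpha\gamma}$ inside the cylinder, and (c) the volume factor $\varepsilon^{\gamma(d-1)}$ or $\varepsilon^{\gamma(d-1-\alpha)}$ when integrating $|\nabla w_{i}||\nabla\bar{u}_{j}|$; the optimal $\gamma$ depends on dimension and on whether the index $j$ corresponds to a translation or rotation mode, which is why the exponents in \eqref{zzw002}--\eqref{LVZQ0011gdw} all differ. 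Once this balance is set up cleanly, each case reduces to a direct (if lengthy) calculation driven by (\textbf{S1})--(\textbf{S3}) and the gradient bounds from Theorem \ref{lem89999} and Corollary \ref{thm86}.
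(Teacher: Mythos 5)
Your overall strategy matches the paper's: partition $\Omega$ by a small cylinder $\mathcal{C}_{\varepsilon^{\gamma}}$ whose radius $\varepsilon^{\gamma}$ is optimized case-by-case; outside the cylinder, pass to the limiting profile $u_{i}^{\ast}$ using the $L^{\infty}$ bound from the maximum principle combined with rescaled $C^{1,\alpha}$ estimates and interpolation; inside the cylinder, use the explicit $\bar{u}_{i}$ and Corollary \ref{thm86}; finally balance the regimes. Note, however, that your opening ``plan'' frames the argument as a global bilinear expansion in $u_{i}=\bar{u}_{i}+w_{i}$, which is not what the paper does and by itself could not produce the $\varepsilon$-independent constants $a_{ij}^{\ast}$ for $d\ge3$: those arise only after the spatial splitting, because the outer region must be compared to $u_{i}^{\ast}$ rather than to $\bar{u}_{i}$.

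There is one genuine error in your Step~2. For the rotation modes $i\ge d+1$ you quote the $L^{\infty}$ comparison as $|u_{i}-u_{i}^{\ast}|\le C\varepsilon^{\alpha/(1+2\alpha)}$, but the correct estimate, used in the paper as \eqref{QNWE001}, is $|u_{i}-u_{i}^{\ast}|\le C\varepsilon^{(1+\alpha)/(1+2\alpha)}$: the rigid motion $\psi_{i}$ vanishes at the origin, so on $\partial D_{1}\cup\partial D_{1}^{\ast}$ near $|x'|\lesssim\varepsilon^{1/(1+2\alpha)}$ all of $u_{i}$, $u_{i}^{\ast}$, $\bar{u}_{i}$, $\bar{u}_{i}^{\ast}$ pick up an extra factor $|\psi_{i}|\lesssim\varepsilon^{1/(1+2\alpha)}$ relative to the translation case, and this extra power is exactly what produces the rate $\varepsilon^{\alpha/(2(1+2\alpha))}$ appearing in \eqref{zzw003} and \eqref{zzw007}. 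With your weaker bound, running the interpolation and cylinder optimization only yields the slower rate $\varepsilon^{\alpha^{2}/(2(1+2\alpha)(1+\alpha))}$, i.e.\ the translation--rotation rate, not the claimed rotation--rotation rate. A related imprecision: the radius $\varepsilon^{1/(1+2\alpha)}$ appears only in the preliminary $L^{\infty}$ comparison step; the cylinder used in the actual decomposition of $a_{ii}$ for rotation modes has the much smaller radius $\varepsilon^{\tilde{\gamma}}$ with $\tilde{\gamma}=\alpha/(2(1+2\alpha)(1+\alpha))$, and this is essential, since the integral over $\Omega_{\varepsilon^{\tilde{\gamma}}}$ must shrink at the same order as the outer gradient comparison $\varepsilon^{(1+\alpha)\tilde{\gamma}}$.
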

\begin{proof}
{\bf Step 1. Proofs of (\ref{zzw001})--(\ref{zzw002})}. Denote $\bar{\gamma}=\frac{\alpha^{2}}{2(1+2\alpha)(1+\alpha)^{2}}$. For $\varepsilon^{\bar{\gamma}}\leq|z'|\leq R$, making use of the change of variable
\begin{align*}
\begin{cases}
x'-z'=|z'|^{1+\alpha}y',\\
x_{d}=|z'|^{1+\alpha}y_{d},
\end{cases}
\end{align*}
then $\Omega_{|z'|+|z'|^{1+\alpha}}\setminus\Omega_{|z'|}$ and $\Omega_{|z'|+|z'|^{1+\alpha}}^{\ast}\setminus\Omega_{|z'|}^{\ast}$ become two nearly unit-size squares (or cylinders) $Q_{1}$ and $Q_{1}^{\ast}$, respectively. For $i=1,2,...,d$, let
\begin{align*}
U_{i}(y)=u_{i}(z'+|z'|^{1+\alpha}y',|z'|^{1+\alpha}y_{d}),\quad\mathrm{in}\;Q_{1},
\end{align*}
and
\begin{align*}
U_{i}^{\ast}(y)=u_{i}^{\ast}(z'+|z'|^{1+\alpha}y',|z'|^{1+\alpha}y_{d}),\quad\mathrm{in}\;Q_{1}^{\ast}.
\end{align*}
Due to the fact that $0<V_{i},V_{i}^{\ast}<1$, it follows from the standard elliptic estimates that
\begin{align*}
\|U_{i}\|_{C^{1,\alpha}(Q_{1})}\leq C,\quad\|U_{i}^{\ast}\|_{C^{1,\alpha}(Q_{1}^{\ast})}\leq C.
\end{align*}
Utilizing an interpolation with \eqref{ZZW0101}, we arrive at
\begin{align*}
|\nabla(U_{i}-U_{i}^{\ast})|\leq C\varepsilon^{\frac{\alpha}{1+2\alpha}(1-\frac{1}{1+\alpha})}\leq C\varepsilon^{\frac{\alpha^{2}}{(1+2\alpha)(1+\alpha)}}.
\end{align*}
Thus, rescaling it back to $u_{i}-u_{i}^{\ast}$ and in view of $\varepsilon^{\bar{\gamma}}\leq|z'|\leq R$, we know
\begin{align*}
|\nabla(u_{i}-u_{i}^{\ast})(x)|\leq C\varepsilon^{\frac{\alpha^{2}}{(1+2\alpha)(1+\alpha)}}|z'|^{-1-\alpha}\leq C\varepsilon^{(1+\alpha)\bar{\gamma}},\quad x\in\Omega^{\ast}_{|z'|+|z'|^{1+\alpha}}\setminus\Omega_{|z'|}^{\ast},
\end{align*}
which implies that
\begin{align}\label{con035}
|\nabla(u_{i}-u_{i}^{\ast})|\leq C\varepsilon^{(1+\alpha)\bar{\gamma}},\quad\mathrm{in}\;D\setminus\big(\overline{D_{1}\cup D_{1}^{\ast}\cup\mathcal{C}_{\varepsilon^{\bar{\gamma}}}}\big).
\end{align}

We now split $a_{ii}$ into three parts as follows:
\begin{align*}
a_{ii}=&\int_{\Omega\setminus\Omega_{R}}(\mathbb{C}^{0}e(u_{i}),e(u_{i}))+\int_{\Omega_{\varepsilon^{\bar{\gamma}}}}(\mathbb{C}^{0}e(u_{i}),e(u_{i}))+\int_{\Omega_{R}\setminus\Omega_{\varepsilon^{\bar{\gamma}}}}(\mathbb{C}^{0}e(u_{i}),e(u_{i}))\nonumber\\
=&:\mathrm{I}+\mathrm{II}+\mathrm{III}.
\end{align*}
With regard to the first term $\mathrm{I}$, in light of the boundedness of $|\nabla u_{i}|$ in $D_{1}^{\ast}\setminus(D_{1}\cup\Omega_{R})$ and $D_{1}\setminus D_{1}^{\ast}$ and the fact that the volume of $D_{1}^{\ast}\setminus(D_{1}\cup\Omega_{R})$ and $D_{1}\setminus D_{1}^{\ast}$ is of order $O(\varepsilon)$, it follows from (\ref{con035}) that
\begin{align}\label{YUA123}
\mathrm{I}=&\int_{D\setminus(D_{1}\cup D_{1}^{\ast}\cup\Omega_{R})}(\mathbb{C}^{0}e(u_{i}),e(u_{i}))+O(1)\varepsilon\notag\\
=&\int_{D\setminus(D_{1}\cup D_{1}^{\ast}\cup\Omega_{R})}(\mathbb{C}^{0}e(u^{\ast}_{i}),e(u^{\ast}_{i}))+2\int_{D\setminus(D_{1}\cup D_{1}^{\ast}\cup\Omega_{R})}(\mathbb{C}^{0}e(u_{i}-u^{\ast}_{i}),e(u^{\ast}_{i}))\notag\\
&+\int_{D\setminus(D_{1}\cup D_{1}^{\ast}\cup\Omega_{R})}(\mathbb{C}^{0}e(u_{i}-u^{\ast}_{i}),e(u_{i}-u^{\ast}_{i}))\notag\\
=&\int_{\Omega^{\ast}\setminus\Omega^{\ast}_{R}}(\mathbb{C}^{0}e(u^{\ast}_{i}),e(u^{\ast}_{i}))+O(1)\varepsilon^{(1+\alpha)\bar{\gamma}}.
\end{align}

For the second term $\mathrm{II}$, recalling the definition of $\bar{u}_{i}$ and using Corollary \ref{thm86}, we have
\begin{align}\label{YUA03365}
\mathrm{II}=&\int_{\Omega_{\varepsilon^{\bar{\gamma}}}}(\mathbb{C}^{0}e(\bar{u}_{i}),e(\bar{u}_{i}))+2\int_{\Omega_{\varepsilon^{\bar{\gamma}}}}(\mathbb{C}^{0}e(u_{i}-\bar{u}_{i}),e(\bar{u}_{i}))\notag\\
&+\int_{\Omega_{\varepsilon^{\bar{\gamma}}}}(\mathbb{C}^{0}e(u_{i}-\bar{u}_{i}),e(u_{i}-\bar{u}_{i}))\notag\\
=&\,\mathcal{L}_{d}^{i}\int_{|x'|<\varepsilon^{\bar{\gamma}}}\frac{dx'}{\varepsilon+h_{1}(x')-h(x')}+O(1)
\begin{cases}
|\ln\varepsilon|,&d=2,\\
\varepsilon^{(d-2)\bar{\gamma}},&d\geq3,
\end{cases}
\end{align}
where $\mathcal{L}_{d}^{i}$ is defined by (\ref{AZ}).

For the third term $\mathrm{III}$, we further decompose it into three parts as follows:
\begin{align*}
\mathrm{III}_{1}=&\int_{(\Omega_{R}\setminus\Omega_{\varepsilon^{\bar{\gamma}}})\setminus(\Omega^{\ast}_{R}\setminus\Omega^{\ast}_{\varepsilon^{\bar{\gamma}}})}(\mathbb{C}^{0}e(u_{i}),e(u_{i})),\\
\mathrm{III}_{2}=&\int_{\Omega^{\ast}_{R}\setminus\Omega^{\ast}_{\varepsilon^{\bar{\gamma}}}}(\mathbb{C}^{0}e(u_{i}-u_{i}^{\ast}),e(u_{i}-u_{i}^{\ast}))+2\int_{\Omega^{\ast}_{R}\setminus\Omega^{\ast}_{\varepsilon^{\bar{\gamma}}}}(\mathbb{C}^{0}e(u_{i}-u_{i}^{\ast}),e(u_{i}^{\ast})),\\
\mathrm{III}_{3}=&\int_{\Omega^{\ast}_{R}\setminus\Omega^{\ast}_{\varepsilon^{\bar{\gamma}}}}(\mathbb{C}^{0}e(u_{i}^{\ast}),e(u_{i}^{\ast})).
\end{align*}
Since the thickness of $(\Omega_{R}\setminus\Omega_{\varepsilon^{\bar{\gamma}}})\setminus(\Omega^{\ast}_{R}\setminus\Omega^{\ast}_{\varepsilon^{\bar{\gamma}}})$ is of order $O(\varepsilon)$, we see from (\ref{Le2.025}) that
\begin{align}\label{YUA0333355}
\mathrm{III}_{1}\leq\,C\varepsilon\int_{\varepsilon^{\bar{\gamma}}<|x'|<R}\frac{dx'}{|x'|^{2(1+\alpha)}}\leq C\varepsilon^{1+(d-3-2\alpha)\bar{\gamma}}.
\end{align}
By applying Corollary \ref{thm86} to (\ref{l03.001}), we deduce that for $i=1,2,...,d$,
\begin{align}\label{LGA01}
|\nabla_{x'}u^{\ast}_{i}|\leq\frac{C}{|x'|},\quad|\partial_{d}u^{\ast}_{i}|\leq\frac{C}{|x'|^{1+\alpha}},\quad|\nabla(u_{i}^{\ast}-\bar{u}_{i}^{\ast})|\leq\frac{C}{|x'|}.
\end{align}
A consequence of (\ref{con035}) and \eqref{LGA01} yields that
\begin{align}\label{YUA036}
|\mathrm{III}_{2}|\leq C\varepsilon^{(1+\alpha)\bar{\gamma}}.
\end{align}
As for $\mathrm{III}_{3}$, using (\ref{LGA01}) again, we derive that for $d=2$,
\begin{align}\label{QPQ01}
\mathrm{III}_{3}=&\int_{\Omega_{R}^{\ast}\setminus\Omega^{\ast}_{\varepsilon^{\bar{\gamma}}}}(\mathbb{C}^{0}e(\bar{u}_{i}^{\ast}),e(\bar{u}_{i}^{\ast}))+2\int_{\Omega_{R}^{\ast}\setminus\Omega^{\ast}_{\varepsilon^{\bar{\gamma}}}}(\mathbb{C}^{0}e(u_{i}^{\ast}-\bar{u}_{i}^{\ast}),e(\bar{u}_{i}^{\ast}))\notag\\
&+\int_{\Omega_{R}^{\ast}\setminus\Omega^{\ast}_{\varepsilon^{\bar{\gamma}}}}(\mathbb{C}^{0}e(u_{i}^{\ast}-\bar{u}_{i}^{\ast}),e(u_{i}^{\ast}-\bar{u}_{i}^{\ast}))\notag\\
=&\,\mathcal{L}_{2}^{i}\int_{\varepsilon^{\bar{\gamma}}<|x_{1}|<R}\frac{dx_{1}}{h_{1}(x_{1})-h(x_{1})}+O(1)|\ln\varepsilon|;
\end{align}
for $d\geq3$,
\begin{align}\label{YUAQPQ01}
\mathrm{III}_{3}=&\int_{\Omega_{R}^{\ast}\setminus\Omega^{\ast}_{\varepsilon^{\bar{\gamma}}}}(\mathbb{C}^{0}e(\bar{u}_{i}^{\ast}),e(\bar{u}_{i}^{\ast}))+2\int_{\Omega_{R}^{\ast}\setminus\Omega^{\ast}_{\varepsilon^{\bar{\gamma}}}}(\mathbb{C}^{0}e(u_{i}^{\ast}-\bar{u}_{i}^{\ast}),e(\bar{u}_{i}^{\ast}))\notag\\
&+\int_{\Omega_{R}^{\ast}\setminus\Omega^{\ast}_{\varepsilon^{\bar{\gamma}}}}(\mathbb{C}^{0}e(u_{i}^{\ast}-\bar{u}_{i}^{\ast}),e(u_{i}^{\ast}-\bar{u}_{i}^{\ast}))\notag\\
=&\mathcal{L}_{d}^{i}\int_{\varepsilon^{\bar{\gamma}}<|x'|<R}\frac{dx'}{h_{1}(x')-h(x')}-\int_{\Omega^{\ast}\setminus\Omega^{\ast}_{R}}(\mathbb{C}^{0}e(u_{i}^{\ast}),e(u_{i}^{\ast}))\notag\\
&+M^{\ast}_{i}+O(1)\varepsilon^{(d-2)\bar{\gamma}},
\end{align}
where
\begin{align*}
M_{i}^{\ast}=&\int_{\Omega^{\ast}\setminus\Omega^{\ast}_{R}}(\mathbb{C}^{0}e(u_{i}^{\ast}),e(u_{i}^{\ast}))+2\int_{\Omega_{R}^{\ast}}(\mathbb{C}^{0}e(u_{i}^{\ast}-\bar{u}_{i}^{\ast}),e(\bar{u}_{i}^{\ast}))\notag\\
&+\int_{\Omega_{R}^{\ast}}(\mathbb{C}^{0}e(u_{i}^{\ast}-\bar{u}_{i}^{\ast}),e(u_{i}^{\ast}-\bar{u}_{i}^{\ast}))\\
&+
\begin{cases}
\int_{\Omega_{R}^{\ast}}(\lambda+\mu)(\partial_{i}\bar{v}^{\ast})^{2}+\mu\sum\limits^{d-1}_{j=1}(\partial_{j}\bar{v}^{\ast})^{2},&i=1,...,d-1,\\
\int_{\Omega_{R}^{\ast}}\mu\sum\limits^{d-1}_{j=1}(\partial_{j}\bar{v}^{\ast})^{2},&i=d.
\end{cases}
\end{align*}
Then, from (\ref{YUA123})--(\ref{YUA0333355}) and (\ref{YUA036})--(\ref{YUAQPQ01}), we arrive at
\begin{align}\label{aaaa01}
a_{ii}=&\mathcal{L}_{d}^{i}\left(\int_{\varepsilon^{\bar{\gamma}}<|x'|<R}\frac{dx'}{h_{1}(x')-h(x')}+\int_{|x'|<\varepsilon^{\bar{\gamma}}}\frac{dx'}{\varepsilon+h_{1}(x')-h(x')}\right)\nonumber\\
&+
\begin{cases}
O(1)|\ln\varepsilon|,&d=2,\\
M^{\ast}_{i}+O(1)\varepsilon^{\bar{\gamma}\min\{1+\alpha,d-2\}},&d\geq3.
\end{cases}
\end{align}
Further, for $d=2$,
\begin{align}\label{ZZQ01}
&\int_{|x_{1}|<R}\frac{dx_{1}}{\varepsilon+h_{1}-h}+\int_{\varepsilon^{\bar{\gamma}}<|x_{1}|<R}\frac{\varepsilon}{(h_{1}-h)(\varepsilon+h_{1}-h)}\notag\\
&=\int_{|x_{1}|<R}\frac{1}{\varepsilon+\tau|x_{1}|^{1+\alpha}}+\int_{|x_{1}|<R}\left(\frac{1}{\varepsilon+h_{1}-h}-\frac{1}{\varepsilon+\tau|x_{1}|^{1+\alpha}}\right)+O(1)\varepsilon^{\frac{\alpha^{2}+4\alpha+2}{2(1+\alpha)^{2}}}\notag\\
&=2\int_{0}^{R}\frac{1}{\varepsilon+\tau s^{1+\alpha}}+O(1)\int_{0}^{R}\frac{s^{\beta}}{\varepsilon+\tau s^{1+\alpha}}\notag\\
&=\frac{2\Gamma_{\alpha}}{(1+\alpha)\tau^{\frac{1}{1+\alpha}}}\varepsilon^{-\frac{\alpha}{1+\alpha}}
\begin{cases}
1+O(1)\varepsilon^{\frac{\beta}{1+\alpha}},&\alpha>\beta,\\
1+O(1)\varepsilon^{\frac{\alpha}{1+\alpha}}|\ln\varepsilon|,&\alpha=\beta,\\
1+O(1)\varepsilon^{\frac{\alpha}{1+\alpha}},&0<\alpha<\beta;
\end{cases}
\end{align}
for $d\geq3$,
\begin{align}\label{ZZQ02}
&\int_{|x'|<R}\frac{dx'}{h_{1}-h}-\int_{|x'|<\varepsilon^{\bar{\gamma}}}\frac{\varepsilon\,dx'}{(h_{1}-h)(\varepsilon+h_{1}-h)}\notag\\
&=\int_{\Omega_{R}^{\ast}}|\partial_{d}\bar{u}^{\ast}_{i}|^{2}+O(1)\varepsilon^{(d-2-\alpha)\bar{\gamma}}.
\end{align}
Consequently, it follows from \eqref{aaaa01}--\eqref{ZZQ02} that (\ref{zzw001})--(\ref{zzw002}) hold.

{\bf Step 2. Proof of (\ref{zzw003})}. Note that for $i=d+1,...,\frac{d(d+1)}{2}$, there exist two indices $1\leq k_{i}<l_{i}\leq d$ such that $\psi_{i}=(0,...,0,x_{l_{i}},0,...,0,-x_{k_{i}},0,...,0)$. Take $\tilde{\gamma}=\frac{\alpha}{2(1+2\alpha)(1+\alpha)}$. For $i=d+1,...,\frac{d(d+1)}{2}$, similarly as above, we split $a_{ii}$ as follows:
\begin{align*}
a_{ii}=&\int_{\Omega\setminus\Omega_{R}}(\mathbb{C}^{0}e(u_{i}),e(u_{i}))+\int_{\Omega_{\varepsilon^{\tilde{\gamma}}}}(\mathbb{C}^{0}e(u_{i}),e(u_{i}))+\int_{\Omega_{R}\setminus\Omega_{\varepsilon^{\tilde{\gamma}}}}(\mathbb{C}^{0}e(u_{i}),e(u_{i}))\nonumber\\
=&:\mathrm{I}+\mathrm{II}+\mathrm{III}.
\end{align*}

To begin with, by applying \eqref{Lw3.003}--\eqref{ZZW0101} with a minor modification, we obtain that for $i=d+1,...,\frac{d(d+1)}{2}$,
\begin{align}\label{QNWE001}
|u_{i}-u_{i}^{\ast}|\leq C\varepsilon^{\frac{1+\alpha}{1+2\alpha}},\quad\;\,\mathrm{in}\;\,D\setminus\big(\overline{D_{1}\cup D_{1}^{\ast}\cup\mathcal{C}_{\varepsilon^{\frac{1}{1+2\alpha}}}}\big).
\end{align}
Analogously as before, in view of \eqref{QNWE001}, we deduce from the rescale argument, the interpolation inequality and the standard elliptic estimates that for $i=d+1,...,\frac{d(d+1)}{2}$,
\begin{align}\label{QNWE002}
|\nabla(u_{i}-u_{i}^{\ast})|\leq C\varepsilon^{(1+\alpha)\tilde{\gamma}},\quad\;\,\mathrm{in}\;\,D\setminus\big(\overline{D_{1}\cup D_{1}^{\ast}\cup\mathcal{C}_{\varepsilon^{\tilde{\gamma}}}}\big).
\end{align}

As for the first term $\mathrm{I}$, following the same argument used in \eqref{YUA123}, we see that
\begin{align}\label{QNWE003}
\mathrm{I}=&\int_{D\setminus(D_{1}\cup D_{1}^{\ast}\cup\Omega_{R})}(\mathbb{C}^{0}e(u_{i}),e(u_{i}))+O(1)\varepsilon\notag\\
=&\int_{\Omega^{\ast}\setminus\Omega^{\ast}_{R}}(\mathbb{C}^{0}e(u^{\ast}_{i}),e(u^{\ast}_{i}))+O(1)\varepsilon^{(1+\alpha)\tilde{\gamma}}.
\end{align}

For the second term $\mathrm{II}$, we further split it as follows:
\begin{align*}
\mathrm{II}
=&\int_{\Omega_{\varepsilon^{\bar{\gamma}}}}[(\mathbb{C}^{0}e(\bar{u}_{i}),e(\bar{u}_{i}))+2(\mathbb{C}^{0}e(\bar{u}_{i}),e(u_{i}-\bar{u}_{i}))+(\mathbb{C}^{0}e(u_{i}-\bar{u}_{i}),e(u_{i}-\bar{u}_{i}))].
\end{align*}
By a direct calculation, it follows that for $i=d+1,...,\frac{d(d+1)}{2}$,
\begin{align*}
(\mathbb{C}^{0}e(\bar{u}_{i}),e(\bar{u}_{i}))=&\mu(x_{k_{i}}^{2}+x_{l_{i}}^{2})\sum^{d}_{j=1}(\partial_{x_{j}}\bar{v})^{2}+(\lambda+\mu)(x_{l_{i}}\partial_{x_{k_{i}}}\bar{v}-x_{k_{i}}\partial_{x_{l_{i}}}\bar{v})^{2}.
\end{align*}
This, together with Corollary \ref{thm86}, yields that
\begin{align}\label{QNWE005}
\mathrm{II}=&O(1)\varepsilon^{(d-\alpha)\tilde{\gamma}}.
\end{align}

With regard to the third term $\mathrm{III}$, we further decompose it as follows:
\begin{align*}
\mathrm{III}_{1}=&\int_{(\Omega_{R}\setminus\Omega_{\varepsilon^{\tilde{\gamma}}})\setminus(\Omega^{\ast}_{R}\setminus\Omega^{\ast}_{\varepsilon^{\tilde{\gamma}}})}(\mathbb{C}^{0}e(u_{i}),e(u_{i}))+\int_{\Omega^{\ast}_{R}\setminus\Omega^{\ast}_{\varepsilon^{\tilde{\gamma}}}}(\mathbb{C}^{0}e(u_{i}-u_{i}^{\ast}),e(u_{i}-u_{i}^{\ast}))\notag\\
&+2\int_{\Omega^{\ast}_{R}\setminus\Omega^{\ast}_{\varepsilon^{\tilde{\gamma}}}}(\mathbb{C}^{0}e(u_{i}-u_{i}^{\ast}),e(u_{i}^{\ast})),\\
\mathrm{III}_{2}=&\int_{\Omega^{\ast}_{R}\setminus\Omega^{\ast}_{\varepsilon^{\tilde{\gamma}}}}(\mathbb{C}^{0}e(u_{i}^{\ast}),e(u_{i}^{\ast})).
\end{align*}
Since the thickness of $(\Omega_{R}\setminus\Omega_{\varepsilon^{\tilde{\gamma}}})\setminus(\Omega^{\ast}_{R}\setminus\Omega^{\ast}_{\varepsilon^{\tilde{\gamma}}})$ is $\varepsilon$, then we obtain from (\ref{Le2.025}), \eqref{LGA01} and \eqref{QNWE002} that
\begin{align}\label{QNWE006}
\mathrm{III}_{1}=O(1)\varepsilon^{(1+\alpha)\tilde{\gamma}}.
\end{align}
As for $\mathrm{III}_{2}$, following the same argument as in \eqref{QNWE005}, we obtain
\begin{align*}
\int_{\Omega^{\ast}_{\varepsilon^{\tilde{\gamma}}}}(\mathbb{C}^{0}e(u_{i}^{\ast}),e(u_{i}^{\ast}))=O(1)\varepsilon^{(d-\alpha)\tilde{\gamma}},
\end{align*}
which implies that
\begin{align}\label{QNWE007}
\mathrm{III}_{2}=&\int_{\Omega^{\ast}_{R}}(\mathbb{C}^{0}e(u_{i}^{\ast}),e(u_{i}^{\ast}))-\int_{\Omega^{\ast}_{\varepsilon^{\tilde{\gamma}}}}(\mathbb{C}^{0}e(u_{i}^{\ast}),e(u_{i}^{\ast}))\notag\\
=&\int_{\Omega^{\ast}_{R}}(\mathbb{C}^{0}e(u_{i}^{\ast}),e(u_{i}^{\ast}))+O(1)\varepsilon^{(d-\alpha)\tilde{\gamma}}.
\end{align}
Therefore, combining \eqref{QNWE003}--\eqref{QNWE007}, we obtain that for $i=d+1,...,\frac{d(d+1)}{2}$,
\begin{align*}
a_{ii}=&a_{ii}^{\ast}+O(1)\varepsilon^{(1+\alpha)\tilde{\gamma}}.
\end{align*}

{\bf Step 3. Proofs of (\ref{LVZQ001})--(\ref{zzw007})}. Because of symmetry, it suffices to consider the case of $i<j$. Let
\begin{align*}
\hat{\gamma}=&
\begin{cases}
\frac{\alpha^{2}}{2(1+2\alpha)(1+\alpha)^{2}},&i=1,2,...,d,\,j=1,2,...,\frac{d(d+1)}{2},\,i<j,\\
\frac{\alpha}{2(1+2\alpha)(1+\alpha)},&i,j=d+1,...,\frac{d(d+1)}{2},\,i<j.
\end{cases}
\end{align*}
Similarly as before, for $i,j=1,2,...,\frac{d(d+1)}{2}$, $i<j$, we decompose $a_{ij}$ into three parts as follows:
\begin{align*}
a_{ij}=&\int_{\Omega\setminus\Omega_{R}}(\mathbb{C}^{0}e(u_{i}),e(u_{j}))+\int_{\Omega_{\varepsilon^{\hat{\gamma}}}}(\mathbb{C}^{0}e(u_{i}),e(u_{j}))+\int_{\Omega_{R}\setminus\Omega_{\varepsilon^{\hat{\gamma}}}}(\mathbb{C}^{0}e(u_{i}),e(u_{j}))\nonumber\\
=&:\mathrm{I}+\mathrm{II}+\mathrm{III}.
\end{align*}

For the first part $\mathrm{I}$, similar to \eqref{YUA123}, we see that
\begin{align}\label{KKAA123}
\mathrm{I}=&\int_{D\setminus(D_{1}\cup D_{1}^{\ast}\cup\Omega_{R})}(\mathbb{C}^{0}e(u_{i}),e(u_{j}))+O(1)\varepsilon\notag\\
=&\int_{D\setminus(D_{1}\cup D_{1}^{\ast}\cup\Omega_{R})}(\mathbb{C}^{0}e(u^{\ast}_{i}),e(u^{\ast}_{j}))+\int_{D\setminus(D_{1}\cup D_{1}^{\ast}\cup\Omega_{R})}(\mathbb{C}^{0}e(u_{i}-u^{\ast}_{i}),e(u_{j}-u^{\ast}_{j}))\notag\\
&+\int_{D\setminus(D_{1}\cup D_{1}^{\ast}\cup\Omega_{R})}(\mathbb{C}^{0}e(u^{\ast}_{i}),e(u_{j}-u^{\ast}_{j}))+\int_{D\setminus(D_{1}\cup D_{1}^{\ast}\cup\Omega_{R})}(\mathbb{C}^{0}e(u_{i}-u^{\ast}_{i}),e(u^{\ast}_{j}))\notag\\
=&\int_{\Omega^{\ast}\setminus\Omega^{\ast}_{R}}(\mathbb{C}^{0}e(u^{\ast}_{i}),e(u^{\ast}_{j}))+O(1)\varepsilon^{(1+\alpha)\hat{\gamma}}.
\end{align}

As for the second part $\mathrm{II}$, we further split it as follows:
\begin{align}\label{NAT001}
\mathrm{II}=&\int_{\Omega_{\varepsilon^{\hat{\gamma}}}}(\mathbb{C}^{0}e(u_{i}),e(u_{j}))\notag\\
=&\int_{\Omega_{\varepsilon^{\hat{\gamma}}}}(\mathbb{C}^{0}e(\bar{u}_{i}),e(\bar{u}_{j}))+\int_{\Omega_{\varepsilon^{\hat{\gamma}}}}(\mathbb{C}^{0}e(u_{i}-\bar{u}_{i}),e(u_{j}-\bar{u}_{j}))\notag\\
&+\int_{\Omega_{\varepsilon^{\hat{\gamma}}}}(\mathbb{C}^{0}e(\bar{u}_{i}),e(u_{j}-\bar{u}_{j}))+\int_{\Omega_{\varepsilon^{\hat{\gamma}}}}(\mathbb{C}^{0}e(u_{i}-\bar{u}_{i}),e(\bar{u}_{j})).
\end{align}
A direct computation gives that

$(i)$ for $i,j=1,2,...,d,$ $i<j$, then
\begin{align}\label{NAT002}
(\mathbb{C}^{0}e(\bar{u}_{i}),e(\bar{u}_{j}))=(\lambda+\mu)\partial_{x_{i}}\bar{v}\partial_{x_{j}}\bar{v};
\end{align}

$(ii)$ for $i=1,2,...,d$, $j=d+1,...,\frac{d(d+1)}{2}$, there exist two indices $1\leq k_{j}<l_{j}\leq d$ such that
$\bar{u}_{j}=\psi_{j}\bar{v}=(0,...,0,x_{l_{j}}\bar{v},0,...,0,-x_{k_{j}}\bar{v},0,...,0)$. If $k_{j}\neq i,\,l_{j}\neq i$, then
\begin{align}\label{NAT003}
(\mathbb{C}^{0}e(\bar{u}_{i}),e(\bar{u}_{j}))=\lambda\partial_{x_{i}}\bar{v}(x_{l_{j}}\partial_{k_{j}}\bar{v}-x_{k_{j}}\partial_{x_{l_{j}}}\bar{v}),
\end{align}
and if $k_{j}=i,\,l_{j}\neq i$, then
\begin{align}\label{NAT005}
(\mathbb{C}^{0}e(\bar{u}_{i}),e(\bar{u}_{j}))=\mu x_{l_{j}}\sum^{d}_{s=1}(\partial_{x_{s}}\bar{v})^{2}+(\lambda+\mu)\partial_{x_{i}}\bar{v}(x_{l_{j}}\partial_{k_{j}}\bar{v}-x_{k_{j}}\partial_{x_{l_{j}}}\bar{v}),
\end{align}
and if $k_{j}\neq i,\,l_{j}=i$, then
\begin{align}\label{NAT006}
(\mathbb{C}^{0}e(\bar{u}_{i}),e(\bar{u}_{j}))=&-\mu x_{k_{j}}\sum^{d}_{s=1}(\partial_{x_{s}}\bar{v})^{2}+(\lambda+\mu)\partial_{x_{i}}\bar{v}(x_{l_{j}}\partial_{k_{j}}\bar{v}-x_{k_{j}}\partial_{x_{l_{j}}}\bar{v});
\end{align}

$(iii)$ for $i,j=d+1,...,\frac{d(d+1)}{2}$, $i<j$, there exist four indices $1\leq k_{i}<l_{i}\leq d$ and $1\leq k_{j}<l_{j}\leq d$ such that $\bar{u}_{i}=\psi_{i}\bar{v}=(0,...,0,x_{l_{i}}\bar{v},0,...,0,-x_{k_{i}}\bar{v},0,...,0)$
and
$\bar{u}_{j}=\psi_{j}\bar{v}=(0,...,0,x_{l_{j}}\bar{v},0,...,0,-x_{k_{j}}\bar{v},0,...,0)$. Without loss of generality, we let $l_{j}\leq l_{i}$. If $k_{i}\neq k_{j},\,l_{i}\neq l_{j},\,k_{i}\neq l_{j}$, then
\begin{align}\label{NAT007}
(\mathbb{C}^{0}e(\bar{u}_{i}),e(\bar{u}_{j}))=\lambda(x_{l_{i}}\partial_{x_{k_{i}}}\bar{v}-x_{k_{i}}\partial_{x_{l_{i}}}\bar{v})(x_{l_{j}}\partial_{x_{k_{j}}}\bar{v}-x_{k_{j}}\partial_{x_{l_{j}}}\bar{v}),
\end{align}
and if $k_{i}=k_{j},\,l_{i}\neq l_{j}$, then
\begin{align}\label{NAT008}
(\mathbb{C}^{0}e(\bar{u}_{i}),e(\bar{u}_{j}))=&\mu x_{l_{i}}x_{l_{j}}\sum^{d}_{s=1}(\partial_{x_{s}}\bar{v})^{2}+\mu x_{l_{i}}\partial_{x_{l_{j}}}\bar{v}(x_{l_{j}}\partial_{x_{l_{j}}}\bar{v}-x_{k_{i}}\partial_{x_{k_{i}}}\bar{v})\notag\\
&+(\lambda+\mu)(x_{l_{i}}\partial_{x_{k_{i}}}\bar{v}-x_{k_{i}}\partial_{x_{l_{i}}}\bar{v})(x_{l_{j}}\partial_{x_{k_{j}}}\bar{v}-x_{k_{j}}\partial_{x_{l_{j}}}\bar{v}),
\end{align}
and if $k_{i}\neq k_{j},\,l_{i}=l_{j}$, then
\begin{align}\label{NAT009}
(\mathbb{C}^{0}e(\bar{u}_{i}),e(\bar{u}_{j}))=&\mu x_{k_{i}}x_{k_{j}}\sum^{d}_{s=1}(\partial_{x_{s}}\bar{v})^{2}+\mu x_{k_{i}}\partial_{x_{k_{j}}}\bar{v}(x_{k_{j}}\partial_{x_{k_{j}}}\bar{v}-x_{l_{i}}\partial_{x_{l_{i}}}\bar{v})\notag\\
&+(\lambda+\mu)(x_{l_{i}}\partial_{x_{k_{i}}}\bar{v}-x_{k_{i}}\partial_{x_{l_{i}}}\bar{v})(x_{l_{i}}\partial_{x_{k_{j}}}\bar{v}-x_{k_{j}}\partial_{x_{l_{i}}}\bar{v}),
\end{align}
and if $k_{j}<l_{j}=k_{i}<l_{i}$, then
\begin{align}\label{NAT010}
(\mathbb{C}^{0}e(\bar{u}_{i}),e(\bar{u}_{j}))=&-\mu x_{k_{j}}x_{l_{i}}\sum^{d}_{s=1}(\partial_{x_{s}}\bar{v})^{2}+\mu x_{l_{i}}\partial_{x_{k_{j}}}\bar{v}(x_{k_{i}}\partial_{x_{k_{i}}}\bar{v}-x_{k_{j}}\partial_{x_{k_{j}}}\bar{v})\notag\\
&+(\lambda+\mu)(x_{l_{i}}\partial_{x_{k_{i}}}\bar{v}-x_{k_{i}}\partial_{x_{l_{i}}}\bar{v})(x_{k_{i}}\partial_{x_{k_{j}}}\bar{v}-x_{k_{j}}\partial_{x_{k_{i}}}\bar{v}).
\end{align}

Therefore, in view of \eqref{NAT001}--\eqref{NAT010}, by using Corollary \ref{thm86}, the symmetry of integral region, the parity of integrand and the fact that
\begin{align*}
\left|\int^{\varepsilon+h_{1}(x')}_{h_{2}(x')}x_{d}\,dx_{d}\right|\leq\frac{1}{2}|\varepsilon+(h_{1}+h)(x')|\delta(x')\leq C(\varepsilon+|x'|^{1+\alpha})^{2},\quad \mathrm{in}\; B'_{R},
\end{align*}
we derive
\begin{align}\label{MAH01}
\mathrm{II}=&O(1)
\begin{cases}
|\ln\varepsilon|,&d=2,\,i=1,j=2,\\
\varepsilon^{(d-2)\hat{\gamma}},&d\geq3,\,i,j=1,2,...,d,\,i<j,\\
\varepsilon^{(d-1)\hat{\gamma}},&d\geq2,\,i=1,2,...,d,\,j=d+1,...,\frac{d(d+1)}{2},\\
\varepsilon^{d\hat{\gamma}},&d\geq3,\,i,j=d+1,...,\frac{d(d+1)}{2},\,i<j.
\end{cases}
\end{align}

For the last part $\mathrm{III}$, we further split it into two parts as follows:
\begin{align*}
\mathrm{III}_{1}=&\int_{(\Omega_{R}\setminus\Omega_{\varepsilon^{\bar{\gamma}}})\setminus(\Omega^{\ast}_{R}\setminus\Omega^{\ast}_{\varepsilon^{\bar{\gamma}}})}(\mathbb{C}^{0}e(u_{i}),e(u_{j}))+\int_{\Omega^{\ast}_{R}\setminus\Omega^{\ast}_{\varepsilon^{\bar{\gamma}}}}(\mathbb{C}^{0}e(u_{i}-u_{i}^{\ast}),e(u_{j}-u_{j}^{\ast}))\notag\\
&+\int_{\Omega^{\ast}_{R}\setminus\Omega^{\ast}_{\varepsilon^{\bar{\gamma}}}}(\mathbb{C}^{0}e(u_{i}-u_{i}^{\ast}),e(u_{j}^{\ast}))+\int_{\Omega^{\ast}_{R}\setminus\Omega^{\ast}_{\varepsilon^{\bar{\gamma}}}}(\mathbb{C}^{0}e(u_{i}^{\ast}),e(u_{j}-u_{j}^{\ast})),\\
\mathrm{III}_{2}=&\int_{\Omega^{\ast}_{R}\setminus\Omega^{\ast}_{\varepsilon^{\bar{\gamma}}}}(\mathbb{C}^{0}e(u_{i}^{\ast}),e(u_{j}^{\ast})).
\end{align*}
In view of the fact that the thickness of $(\Omega_{R}\setminus\Omega_{\varepsilon^{\bar{\gamma}}})\setminus(\Omega^{\ast}_{R}\setminus\Omega^{\ast}_{\varepsilon^{\bar{\gamma}}})$ is $\varepsilon$, it follows from (\ref{Le2.025}), (\ref{con035}), \eqref{LGA01} and \eqref{QNWE002} that
\begin{align}\label{KAM001}
\mathrm{III}_{1}=O(1)\varepsilon^{(1+\alpha)\hat{\gamma}}.
\end{align}

As for $\mathrm{III}_{2}$, we deduce that

$(a)$ for $d=2$, $i=1,j=2$,
\begin{align*}
\mathrm{III}_{2}=&\int_{\Omega_{R}^{\ast}\setminus\Omega^{\ast}_{\varepsilon^{\hat{\gamma}}}}(\mathbb{C}^{0}e(\bar{u}_{1}^{\ast}),e(\bar{u}_{2}^{\ast}))+\int_{\Omega_{R}^{\ast}\setminus\Omega^{\ast}_{\varepsilon^{\hat{\gamma}}}}(\mathbb{C}^{0}e(u_{1}^{\ast}-\bar{u}_{1}^{\ast}),e(u_{2}^{\ast}-\bar{u}_{2}^{\ast}))\notag\\
&+\int_{\Omega_{R}^{\ast}\setminus\Omega^{\ast}_{\varepsilon^{\hat{\gamma}}}}(\mathbb{C}^{0}e(u_{1}^{\ast}-\bar{u}_{1}^{\ast}),e(\bar{u}_{2}^{\ast}))+\int_{\Omega_{R}^{\ast}\setminus\Omega^{\ast}_{\varepsilon^{\hat{\gamma}}}}(\mathbb{C}^{0}e(\bar{u}_{1}^{\ast}),e(u_{2}^{\ast}-\bar{u}_{2}^{\ast})),
\end{align*}
which, together with $(\mathbb{C}^{0}e(\bar{u}_{1}^{\ast}),e(\bar{u}_{2}^{\ast}))=(\lambda+\mu)\partial_{x_{1}}\bar{v}^{\ast}\partial_{x_{2}}\bar{v}^{\ast}$, leads to that
\begin{align}\label{PLAM001}
\mathrm{III}_{2}=O(1)|\ln\varepsilon|;
\end{align}

$(b)$ for $d\geq3,\,i,j=1,2,...,d,\,i<j$, for $d\geq2,\,i=1,2,...,d,\,j=d+1,...,\frac{d(d+1)}{2},i<j$, or for $d\geq3,\,i,j=d+1,...,\frac{d(d+1)}{2},\,i<j$, similar to \eqref{NAT001}, applying \eqref{NAT002}--\eqref{NAT010} with $\bar{v}$ replaced by $\bar{v}^{\ast}$ for $\int_{\Omega^{\ast}_{\varepsilon^{\hat{\gamma}}}}(\mathbb{C}^{0}e(u_{i}^{\ast}),e(u_{i}^{\ast}))$, we obtain
\begin{align}\label{KAM002}
&\mathrm{III}_{2}-\int_{\Omega^{\ast}_{R}}(\mathbb{C}^{0}e(u_{i}^{\ast}),e(u_{j}^{\ast}))\notag\\
=&-\int_{\Omega^{\ast}_{\varepsilon^{\hat{\gamma}}}}(\mathbb{C}^{0}e(u_{i}^{\ast}),e(u_{j}^{\ast}))\notag\\
=&O(1)
\begin{cases}
\varepsilon^{(d-2)\hat{\gamma}},&d\geq3,\,i,j=1,2,...,d,\,i<j,\\
\varepsilon^{(d-1)\hat{\gamma}},&d\geq2,\,i=1,2,...,d,\,j=d+1,...,\frac{d(d+1)}{2},\\
\varepsilon^{d\hat{\gamma}},&d\geq3,\,i,j=d+1,...,\frac{d(d+1)}{2},\,i<j.
\end{cases}
\end{align}
Then it follows from \eqref{KAM001}--\eqref{KAM002} that
\begin{align*}
\mathrm{III}=\int_{\Omega_{R}\setminus\Omega_{\varepsilon^{\hat{\gamma}}}}(\mathbb{C}^{0}e(u_{i}),e(u_{j}))=O(1)|\ln\varepsilon|,\quad d=2,\,i=1,j=2,
\end{align*}
and
\begin{align*}
&\mathrm{III}-\int_{\Omega^{\ast}_{R}}(\mathbb{C}^{0}e(u_{i}^{\ast}),e(u_{j}^{\ast}))\notag\\
=&O(1)
\begin{cases}
\varepsilon^{\hat{\gamma}\min\{1+\alpha,d-2\}},&d\geq3,\,i,j=1,2,...,d,\,i<j,\\
\varepsilon^{\hat{\gamma}\min\{1+\alpha,d-1\}},&d\geq2,\,i=1,2,...,d,\,j=d+1,...,\frac{d(d+1)}{2},\\
\varepsilon^{\hat{\gamma}d},&d\geq3,\,i,j=d+1,...,\frac{d(d+1)}{2},\,i<j.
\end{cases}
\end{align*}
This, in combination with \eqref{KKAA123} and \eqref{MAH01}, reads that
\begin{align*}
a_{12}=O(1)|\ln\varepsilon|,\quad d=2,
\end{align*}
and
\begin{align*}
a_{ij}=a_{ij}^{\ast}+&O(1)
\begin{cases}
\varepsilon^{\hat{\gamma}\min\{1+\alpha,d-2\}},&d\geq3,\,i,j=1,2,...,d,\,i<j,\\
\varepsilon^{\hat{\gamma}\min\{1+\alpha,d-1\}},&d\geq2,\,i=1,2,...,d,\,j=d+1,...,\frac{d(d+1)}{2},\\
\varepsilon^{\hat{\gamma}(1+\alpha)},&d\geq3,\,i,j=d+1,...,\frac{d(d+1)}{2},\,i<j.
\end{cases}
\end{align*}

\end{proof}

Before proving Theorem \ref{OMG123}, we first state a lemma with its proof seen in \cite{BJL2017}.
\begin{lemma}\label{FBC6}
There exists a positive universal constant $C$, independent of $\varepsilon$, such that
\begin{align}\label{BJ010}
\sum^{\frac{d(d+1)}{2}}_{i,j=1}a_{ij}\xi_{i}\xi_{j}\geq\frac{1}{C},\quad\;\,\forall\;\xi\in\mathbb{R}^{\frac{d(d+1)}{2}},\;|\xi|=1.
\end{align}
\end{lemma}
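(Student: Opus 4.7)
The plan is to recognize the bilinear form $\xi\mapsto\sum a_{ij}\xi_{i}\xi_{j}$ as a Dirichlet energy and then extract uniform coercivity by restricting to a subdomain that stays bounded away from the degenerate thin gap. Set
\begin{align*}
U_{\xi}:=\sum_{i=1}^{d(d+1)/2}\xi_{i}u_{i},\qquad \psi_{\xi}:=\sum_{i=1}^{d(d+1)/2}\xi_{i}\psi_{i}.
\end{align*}
Using the identity $a_{ij}=\int_{\Omega}(\mathbb{C}^{0}e(u_{i}),e(u_{j}))$ stated at the beginning of Section \ref{SEC005} together with the bilinearity of $(\mathbb{C}^{0}\cdot,\cdot)$, one has
\begin{align*}
\sum_{i,j=1}^{d(d+1)/2}a_{ij}\xi_{i}\xi_{j}=\int_{\Omega}(\mathbb{C}^{0}e(U_{\xi}),e(U_{\xi}))\,dx,
\end{align*}
so it suffices to bound this energy from below by a positive constant independent of $\varepsilon$ whenever $|\xi|=1$.

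The heart of the argument is to throw away the thin-gap contribution and work entirely on the fixed outer region $\Omega\setminus\Omega_{R}$, which is Lipschitz with constants uniformly controlled in $\varepsilon$ since $\partial D_{1}^{\varepsilon}$ is a translate of $\partial D_{1}^{\ast}$ by $(0',\varepsilon)$ and hence converges in $C^{1,\alpha}$ as $\varepsilon\to 0$. Because $U_{\xi}=0$ on $\partial D$, which is a portion of $\partial(\Omega\setminus\Omega_{R})$ of positive surface measure, I would invoke ellipticity (\ref{ellip}) together with the first Korn inequality with vanishing Dirichlet data to conclude
\begin{align*}
\int_{\Omega}(\mathbb{C}^{0}e(U_{\xi}),e(U_{\xi}))\,dx\ \geq\ c_{0}\,\|e(U_{\xi})\|_{L^{2}(\Omega\setminus\Omega_{R})}^{2}\ \geq\ \frac{1}{C}\,\|U_{\xi}\|_{H^{1}(\Omega\setminus\Omega_{R})}^{2},
\end{align*}
with both constants $\varepsilon$-uniform. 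Then the trace inequality on this fixed region, combined with $U_{\xi}|_{\partial D_{1}}=\psi_{\xi}$, gives $\|U_{\xi}\|_{H^{1}(\Omega\setminus\Omega_{R})}\geq C^{-1}\|\psi_{\xi}\|_{L^{2}(\partial D_{1}\setminus\overline{\Omega_{R}})}$.

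The last step is purely finite-dimensional: the map $\xi\mapsto\|\psi_{\xi}\|_{L^{2}(\partial D_{1}\setminus\overline{\Omega_{R}})}$ is a norm on $\mathbb{R}^{d(d+1)/2}$, because a nontrivial rigid motion cannot vanish on a set of positive $(d-1)$-dimensional Hausdorff measure and $\{\psi_{i}\}$ is a basis of the rigid displacement space $\Psi$. All norms on a finite-dimensional space being equivalent, there exists $c>0$ with $\|\psi_{\xi}\|_{L^{2}(\partial D_{1}\setminus\overline{\Omega_{R}})}\geq c|\xi|$, and chaining these three bounds produces (\ref{BJ010}).

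The main obstacle is the $\varepsilon$-uniformity of the Korn constant, the trace constant, and the norm-equivalence constant for $\xi\mapsto\|\psi_{\xi}\|_{L^{2}(\partial D_{1}\setminus\overline{\Omega_{R}})}$. This uniformity is ultimately a consequence of the translation structure $D_{1}^{\varepsilon}=D_{1}^{\ast}+(0',\varepsilon)$, which guarantees that $\Omega\setminus\Omega_{R}$ is a $C^{1,\alpha}$-small perturbation of $\Omega^{\ast}\setminus\Omega^{\ast}_{R}$; if one prefers, the uniformity can be verified by a standard contradiction/compactness argument in which a hypothetical failing sequence $(\varepsilon_{n},\xi^{(n)})$ with $|\xi^{(n)}|=1$ would yield in the limit a nontrivial rigid motion that vanishes on a subset of $\partial D_{1}^{\ast}$ of positive surface measure, a contradiction.
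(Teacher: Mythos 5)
Your proof is correct and follows the standard route taken in \cite{BJL2017}, to which the paper defers for this lemma without reproducing the argument: rewrite the quadratic form as the elastic energy of $U_\xi=\sum_i\xi_i u_i$, discard the nonnegative contribution of the thin gap, and obtain $\varepsilon$-uniform coercivity on the outer region $\Omega\setminus\Omega_R$ via Korn, trace, and the finite-dimensionality of the rigid-motion space. One terminological caution: since $U_\xi$ vanishes on $\partial D\cap\partial(\Omega\setminus\Omega_R)$ but \emph{not} on all of $\partial(\Omega\setminus\Omega_R)$, the bound $\|e(U_\xi)\|_{L^2(\Omega\setminus\Omega_R)}\geq c\|U_\xi\|_{H^1(\Omega\setminus\Omega_R)}$ you invoke is not the first Korn inequality (which is for $H^1_0$ and carries a domain-independent constant) but the Korn inequality for vector fields vanishing on a boundary portion of positive surface measure, whose constant genuinely depends on the domain and on that portion. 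This is precisely why the $\varepsilon$-uniformity discussion you give at the end is an indispensable part of the argument rather than a remark; and the compactness fallback you sketch, while workable, hides more than a two-line detail because both the solutions $u_i$ and the domain $\Omega\setminus\Omega_R$ move with $\varepsilon$, so passing to a limiting rigid motion on $\Omega^{\ast}$ that vanishes on $\partial D$ requires interior and boundary elliptic estimates with constants uniform in the $C^{1,\alpha}$ geometry.
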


Now we have all the necessary ingredients to complete the proof of Theorem \ref{OMG123}.
\begin{proof}[Proof of Theorem \ref{OMG123}.]

To begin with, denote
\begin{align*}
X=(C^{1},C^{2},...,C^{\frac{d(d+1)}{2}})^{T},\quad Y=(Q_{1}[\varphi],Q_{2}[\varphi],...,Q_{\frac{d(d+1)}{2}}[\varphi])^{T},
\end{align*}
and
\begin{gather*}
\mathbb{A}=\begin{pmatrix} a_{11}&a_{12}&\cdots&a_{1\frac{d(d+1)}{2}} \\ a_{21}&a_{22}&\cdots&a_{2\frac{d(d+1)}{2}} \\ \vdots&\vdots&\ddots&\vdots\\a_{\frac{d(d+1)}{2}1}&a_{\frac{d(d+1)}{2}2}&\cdots&a_{\frac{d(d+1)}{2}\frac{d(d+1)}{2}}
\end{pmatrix}.
\end{gather*}
Then \eqref{Le3.078} becomes
\begin{equation}\label{PAKD001}
\mathbb{A}X=Y.
\end{equation}

We now divide into two cases to accomplish the proof.

$(i)$ If $d=2$, denote
\begin{align*}
\mathbb{B}_{i}[\varphi]=\begin{pmatrix} Q_{i}[\varphi]&a_{i3}\\  Q_{3}[\varphi]&a_{33}\\  \end{pmatrix},\quad i=1,2.
\end{align*}
Therefore, utilizing \eqref{AAPP321}, \eqref{zzw003} and \eqref{LVZQ0011gdw}, we obtain
\begin{align*}
\det\mathbb{B}_{i}[\varphi]=\det\mathbb{B}_{i}^{\ast}[\varphi]+O(\varepsilon^{\min\{\frac{\alpha^{2}}{2(1+2\alpha)(1+\alpha)^{2}},\frac{(1-\alpha)\alpha}{2(1+2\alpha)}\}}),\quad i=1,2,
\end{align*}
which yields that
\begin{align}\label{QGH01}
\frac{\det\mathbb{B}_{i}[\varphi]
}{a_{33}}=&\frac{\det\mathbb{B}_{i}^{\ast}[\varphi]}{a_{33}^{\ast}}\frac{1}{1-\frac{a_{33}^{\ast}-a_{33}}{a_{33}^{\ast}}}+\frac{\det\mathbb{B}_{i}[\varphi]-\det\mathbb{B}_{i}^{\ast}[\varphi]}{a_{33}}\notag\\
=&\frac{\det\mathbb{B}_{i}^{\ast}[\varphi]}{a_{33}^{\ast}}(1+O(\varepsilon^{\min\{\frac{\alpha^{2}}{2(1+2\alpha)(1+\alpha)^{2}},\frac{(1-\alpha)\alpha}{2(1+2\alpha)}\}})),
\end{align}
and
\begin{align}\label{QGH02}
\frac{Q_{3}[\varphi]
}{a_{33}}=&\frac{Q_{3}^{\ast}[\varphi]}{a_{33}^{\ast}}\frac{1}{1-\frac{a_{33}^{\ast}-a_{33}}{a_{33}^{\ast}}}+\frac{Q_{3}[\varphi]-Q_{3}^{\ast}[\varphi]}{a_{33}}=\frac{Q_{3}^{\ast}[\varphi]}{a_{33}^{\ast}}(1+O(\varepsilon^{\frac{\alpha}{2(1+2\alpha)}})).
\end{align}
In light of \eqref{zzw001}, we obtain that for $i=1,2,$
\begin{align}\label{AJDC0}
\frac{1}{a_{ii}}=&\frac{\varepsilon^{\frac{\alpha}{1+\alpha}}}{\mathcal{L}_{2}^{i}\mathcal{M}_{\alpha,\tau}}\frac{1}{1-\frac{\mathcal{L}_{2}^{i}\mathcal{M}_{\alpha,\tau}-\varepsilon^{\frac{\alpha}{1+\alpha}}a_{ii}}{\mathcal{L}_{2}^{i}\mathcal{M}_{\alpha,\tau}}}=\frac{\varepsilon^{\frac{\alpha}{1+\alpha}}(1+O(\tilde{\varepsilon}(\alpha,\beta)))}{\mathcal{L}_{2}^{i}\mathcal{M}_{\alpha,\tau}},
\end{align}
where $\tilde{\varepsilon}(\alpha,\beta)$ is defined by \eqref{APDN001}.

Then applying the Cramer's rule for \eqref{PAKD001}, it follows from \eqref{QGH01}--\eqref{AJDC0} that for $i=1,2,$
\begin{align*}
C^{i}=&\frac{\prod\limits_{j\neq i}^{2}a_{jj}\det\mathbb{B}_{i}[\varphi]}{\prod\limits_{j=1}^{3}a_{jj}}(1+O(\varepsilon^{\frac{\alpha}{1+\alpha}}|\ln\varepsilon|))=\frac{\det\mathbb{B}_{i}^{\ast}[\varphi]}{a_{33}^{\ast}}\frac{\varepsilon^{\frac{\alpha}{1+\alpha}}(1+O(\varepsilon(\alpha,\beta)))}{\mathcal{L}_{d}^{i}\mathcal{M}_{\alpha,\tau}},
\end{align*}
and
\begin{align*}
C^{3}=&\frac{Q_{3}[\varphi]}{a_{33}}(1+O(\varepsilon^{\frac{\alpha}{1+\alpha}}))=\frac{Q_{3}^{\ast}[\varphi]}{a_{33}^{\ast}}(1+O(\varepsilon^{\frac{\alpha}{2(1+2\alpha)}})),
\end{align*}
where $\varepsilon(\alpha,\beta)$ is defined in \eqref{ZWZHAO01}.

We now claim that $a_{33}^{\ast}>0$. In fact, when we choose $\xi=(0,0,1)$ in \eqref{BJ010}, we see from \eqref{zzw003} that
\begin{align*}
a_{33}=a_{33}^{\ast}+O(1)\varepsilon^{\frac{\alpha}{2(1+2\alpha)}}\geq\frac{1}{C},
\end{align*}
where the constant $C$ is independent of $\varepsilon$. That is, $a_{33}^{\ast}\geq\frac{1}{C}>0$.

$(ii)$ If $d\geq3$, for $i=1,2,...,\frac{d(d+1)}{2}$, we replace the elements of $i$-th column in the matrix $\mathbb{A}$ by column vector $Y$ and then denote this new matrix by $\mathbb{F}_{i}[\varphi]$ as follows:
\begin{gather*}
\mathbb{F}_{i}[\varphi]=
\begin{pmatrix}
a_{11}&\cdots&Q_{1}[\varphi]&\cdots&a_{1\frac{d(d+1)}{2}} \\\\ \vdots&\ddots&\vdots&\ddots&\vdots\\\\a_{\frac{d(d+1)}{2}1}&\cdots&Q_{\frac{d(d+1)}{2}}[\varphi]&\cdots&a_{\frac{d(d+1)}{2}\frac{d(d+1)}{2}}
\end{pmatrix}.
\end{gather*}
Then it follows from Lemmas \ref{KM323} and \ref{lemmabc} that for $i=1,2,...,\frac{d(d+1)}{2}$,
\begin{align*}
\frac{\det\mathbb{F}_{i}[\varphi]
}{\det\mathbb{A}}=&\frac{\det\mathbb{F}_{i}^{\ast}[\varphi]}{\det\mathbb{A}^{\ast}}\frac{1}{1-\frac{\det\mathbb{A}^{\ast}-\det\mathbb{A}}{\det\mathbb{A}^{\ast}}}+\frac{\det\mathbb{F}_{i}[\varphi]-\det\mathbb{F}_{i}^{\ast}[\varphi]}{\det\mathbb{A}}\notag\\
=&\frac{\det\mathbb{F}_{i}^{\ast}[\varphi]}{\det\mathbb{A}^{\ast}}(1+O(\bar{\varepsilon}(\alpha,d))),
\end{align*}
which, in combination with the Cramer's rule, yields that
\begin{align*}
C^{i}=\frac{\det\mathbb{F}_{i}[\varphi]}{\det\mathbb{A}}=\frac{\det\mathbb{F}_{i}^{\ast}[\varphi]}{\det\mathbb{A}^{\ast}}(1+O(\bar{\varepsilon}(\alpha,d))),
\end{align*}
where $\bar{\varepsilon}(\alpha,d)$ is defined in \eqref{NZKL001}.

We now demonstrate that $\det\mathbb{A}^{\ast}>0$. Similarly as before, it follows from Lemma \ref{lemmabc} and \eqref{zzw003} that
\begin{align*}
\sum^{\frac{d(d+1)}{2}}_{i,j=1}a_{ij}\xi_{i}\xi_{j}=\sum^{\frac{d(d+1)}{2}}_{i,j=1}a_{ij}^{\ast}\xi_{i}\xi_{j}+O(\varepsilon^{\frac{\alpha^{2}}{2(1+2\alpha)(1+\alpha)^{2}}})\geq\frac{1}{C},\quad\;\,\forall\;\xi\in\mathbb{R}^{\frac{d(d+1)}{2}},\;|\xi|=1,
\end{align*}
which implies that
\begin{align*}
\sum^{\frac{d(d+1)}{2}}_{i,j=1}a_{ij}^{\ast}\xi_{i}\xi_{j}\geq\frac{1}{C}.
\end{align*}
That is, the matrix $\mathbb{A}^{\ast}$ is positive definite and we thus have
\begin{align*}
\det\mathbb{A}^{\ast}>0.
\end{align*}

\end{proof}

\section{Proof of Example \ref{CORO001}}\label{SEC006}
\begin{lemma}
Assume as in Example \ref{CORO001}. Then, for a sufficiently small $\varepsilon>0$,

$(i)$ for $i=1,2,$
\begin{align}\label{KAN001}
a_{ii}=&\mathcal{L}_{2}^{i}\mathcal{M}_{\alpha,\tau_{0}}\varepsilon^{-\frac{\alpha}{1+\alpha}}+\mathcal{K}^{\ast}+O(1)|\ln\varepsilon|;
\end{align}

$(ii)$ for $i=3$,
\begin{align}\label{KAN002}
a_{33}=a_{33}^{\ast}+O(1)\varepsilon^{\frac{\alpha}{2(1+2\alpha)}},
\end{align}
where $\mathcal{M}_{\alpha,\tau_{0}}$ is defined by \eqref{zwzh001} with $\tau=\tau_{0}$, $\mathcal{L}_{2}^{i}$, $i=1,2$ are defined in \eqref{AZ} with $d=2$, $\mathcal{K}^{\ast}$ is defined by \eqref{LKM} below.

\end{lemma}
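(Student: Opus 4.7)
Part (ii) requires no new work. The curvilinear-square geometry \eqref{ZZW986} satisfies conditions (\textbf{S1})--(\textbf{S3}) with $\beta = 1+\alpha$, and since $d = 2$ the index $i = 3$ falls in the range $i = d+1,\ldots,\frac{d(d+1)}{2}$, so \eqref{KAN002} is precisely the specialization of \eqref{zzw003} of Lemma \ref{lemmabc} to this geometry.

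Part (i) is a refinement of \eqref{zzw001}. Since $\beta = 1+\alpha > \alpha$ one has $\tilde\varepsilon(\alpha,\beta) = \varepsilon^{\alpha/(1+\alpha)}$, so \eqref{zzw001} already yields $a_{ii} = \mathcal{L}_2^i \mathcal{M}_{\alpha,\tau_0}\varepsilon^{-\alpha/(1+\alpha)} + O(1)$; the task is to identify the non-vanishing $O(1)$ piece as an intrinsic constant $\mathcal{K}^{\ast}$ modulo $O(|\ln\varepsilon|)$. My plan is to retrace the decomposition $a_{ii} = \mathrm{I} + \mathrm{II} + \mathrm{III}$ from the proof of Lemma \ref{lemmabc} (Step 1), reusing \eqref{YUA123}, \eqref{YUA03365}, \eqref{QPQ01}, and the merger of $\mathrm{II}$ with $\mathrm{III}_3$ contained in \eqref{ZZQ01}, but keeping the $\varepsilon$-independent energy $\int_{\Omega^{\ast}\setminus\Omega_{R}^{\ast}}(\mathbb{C}^0 e(u_i^{\ast}), e(u_i^{\ast}))$ coming from $\mathrm{I}$ outside the $O(|\ln\varepsilon|)$ remainder. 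This reduces matters to an asymptotic evaluation of
\begin{align*}
\int_{|x_1|<R}\frac{dx_1}{\varepsilon + h_1(x_1) - h(x_1)}
\end{align*}
to $O(1)$ precision.

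To perform this evaluation I would exploit the explicit form \eqref{ZZW986}: binomial expansion gives $h_1 - h = \tau_0|x_1|^{1+\alpha} + \mathfrak{h}(x_1)$ with $\mathfrak{h}(x_1) = O(|x_1|^{2(1+\alpha)})$, so
\begin{align*}
\int_{|x_1|<R}\frac{dx_1}{\varepsilon + h_1 - h} = \int_{|x_1|<R}\frac{dx_1}{\varepsilon + \tau_0|x_1|^{1+\alpha}} - \int_{|x_1|<R}\frac{\mathfrak{h}(x_1)\,dx_1}{(\varepsilon + \tau_0|x_1|^{1+\alpha})(\varepsilon + h_1 - h)}.
\end{align*}
For the first integral, the substitution $s = \tau_0^{1/(1+\alpha)}|x_1|/\varepsilon^{1/(1+\alpha)}$ together with the tail expansion $\int_T^\infty ds/(1 + s^{1+\alpha}) = T^{-\alpha}/\alpha + O(T^{-(1+2\alpha)})$ yields $\mathcal{M}_{\alpha,\tau_0}\varepsilon^{-\alpha/(1+\alpha)} - 2/(\alpha\tau_0 R^\alpha) + O(\varepsilon)$. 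For the second, the integrand is pointwise bounded by a constant (since $|\mathfrak{h}(x_1)| \leq C|x_1|^{2(1+\alpha)}$ and $\varepsilon + \tau_0|x_1|^{1+\alpha} \geq \tau_0|x_1|^{1+\alpha}$), so dominated convergence gives an $\varepsilon$-independent limit at rate $O(\varepsilon^{1/(1+\alpha)})$. Summing the three $O(1)$ contributions and defining $\mathcal{K}^{\ast}$ as their sum (formula \eqref{LKM}) yields \eqref{KAN001}.

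The main obstacle will be organizational: each of the three $O(1)$ pieces (the boundary constant $-2/(\alpha\tau_0 R^\alpha)$, the truncated $\mathfrak{h}$-integral, and the external energy contribution) depends on the cutoff $R$, yet $a_{ii}$ does not. One must verify that this $R$-dependence cancels; this becomes transparent once the limiting $\mathfrak{h}$-integral is combined with $\int_{\Omega^{\ast}\setminus\Omega_{R}^{\ast}}(\mathbb{C}^0 e(u_i^{\ast}), e(u_i^{\ast}))$ to form a single convergent quantity on $\Omega^{\ast}$, analogously to the definition of $a_{ii}^{\ast}$ in \eqref{KRD0}.
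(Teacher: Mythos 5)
Your approach is the paper's: part (ii) is a direct specialization of \eqref{zzw003}, and part (i) repeats Step~1 of Lemma \ref{lemmabc} with $\beta=1+\alpha$ read off from the binomial expansion of \eqref{ZZW986}, ending with the explicit evaluation of $\int\frac{dx_1}{\varepsilon+\tau_0|x_1|^{1+\alpha}}$; your substitution, tail estimate, and treatment of the $\mathfrak{h}$-difference integral all check out.

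Your ``main obstacle'' paragraph, however, carries a misconception worth flagging. First, verifying that $\mathcal{K}^{\ast}$ is $R$-independent is not needed for \eqref{KAN001} as stated: the remainder is $O(1)|\ln\varepsilon|$, which diverges and so absorbs \emph{any} $O(1)$ shift in $\mathcal{K}^{\ast}$; indeed the paper's own $\mathcal{K}^{\ast}$ in \eqref{LKM} depends on the cutoff $r_0$ through both $C^{\ast}$ and $\frac{2\mathcal{L}_2^i}{\alpha\tau_0 r_0^{\alpha}}$, and the paper makes no attempt to remove this dependence. Second, the mechanism you propose for such a cancellation --- recombining the limiting $\mathfrak{h}$-integral with $\int_{\Omega^{\ast}\setminus\Omega_R^{\ast}}(\mathbb{C}^0 e(u_i^{\ast}),e(u_i^{\ast}))$ into a single convergent energy on $\Omega^{\ast}$ ``analogously to the definition of $a_{ii}^{\ast}$ in \eqref{KRD0}'' --- would fail as written: for $d=2$ and $i=1,2$ the quantity $a_{ii}^{\ast}=\int_{\Omega^{\ast}}(\mathbb{C}^0 e(u_i^{\ast}),e(u_i^{\ast}))$ is divergent (this is exactly why the text after \eqref{KRD0} warns it is ``only valid under some cases in two dimensions''), because $|\partial_{x_d}u_i^{\ast}|\sim (h_1-h)^{-1}\sim |x_1|^{-(1+\alpha)}$ forces $\int_{\Omega_R^{\ast}}|\partial_{x_d}u_i^{\ast}|^2\,dx\sim\int_0^R |x_1|^{-(1+\alpha)}\,dx_1=\infty$. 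Any intrinsic ($R$-independent) constant would therefore require a genuine renormalization rather than a straightforward passage to the domain $\Omega^{\ast}$; since the stated lemma already follows from your first three paragraphs, you should simply stop there and accept the $r_0$-dependent $\mathcal{K}^{\ast}$.
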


\begin{proof}
The proof of \eqref{KAN002} is contained in the proof of \eqref{zzw003} and thus omitted here. We next prove \eqref{KAN001}. Pick $\bar{\gamma}=\frac{\alpha^{2}}{2(1+2\alpha)(1+\alpha)^{2}}$. By the same argument as in \eqref{aaaa01}, we deduce that for $i=1,2$,
\begin{align*}
a_{ii}=&\mathcal{L}_{2}^{i}\left(\int_{\varepsilon^{\bar{\gamma}}<|x_{1}|<r_{0}}\frac{dx_{1}}{h_{1}(x_{1})-h(x_{1})}+\int_{|x_{1}|<\varepsilon^{\bar{\gamma}}}\frac{dx_{1}}{\varepsilon+h_{1}(x_{1})-h(x_{1})}\right)+O(1)|\ln\varepsilon|.
\end{align*}

Observe that by utilizing Taylor expansion, we derive
\begin{align}\label{ZCZ009}
h_{1}(x_{1})-h(x_{1})=\tau_{0}|x_{1}|^{1+\alpha}+O(|x_{1}|^{2+2\alpha}),\quad\mathrm{in}\;B'_{r_{0}},
\end{align}
where $\tau_{0}$ is defined by \eqref{AKLGJ001}. Then it follows from \eqref{ZCZ009} that
\begin{align*}
\int_{\varepsilon^{\bar{\gamma}}<|x_{1}|<r_{0}}\left(\frac{1}{h_{1}-h}-\frac{1}{\tau_{0}|x_{1}|^{1+\alpha}}\right)dx_{1}=\int_{\varepsilon^{\bar{\gamma}}<|x_{1}|<r_{0}}O(1)dx_{1}=C^{\ast}+O(1)\varepsilon^{\bar{\gamma}},
\end{align*}
where $C^{\ast}$ depends on $\tau_{0},r_{0}$, but not on $\varepsilon$. Then
\begin{align*}
\int_{\varepsilon^{\bar{\gamma}}<|x_{1}|<r_{0}}\frac{dx_{1}}{h_{1}-h}=&\int_{\varepsilon^{\bar{\gamma}}<|x_{1}|<r_{0}}\frac{dx_{1}}{\tau_{0}|x_{1}|^{1+\alpha}}+C^{\ast}+O(1)\varepsilon^{\bar{\gamma}}.
\end{align*}
Similarly, we derive
\begin{align*}
\int_{|x_{1}|<\varepsilon^{\bar{\gamma}}}\frac{dx_{1}}{\varepsilon+h_{1}-h}=&\int_{|x_{1}|<\varepsilon^{\bar{\gamma}}}\frac{dx_{1}}{\varepsilon+\tau_{0}|x_{1}|^{1+\alpha}}+O(1)\varepsilon^{\bar{\gamma}}.
\end{align*}
Then the energy $a_{ii}$ becomes
\begin{align*}
a_{ii}=&\mathcal{L}_{2}^{i}\left(\int_{\varepsilon^{\bar{\gamma}}<|x_{1}|<r_{0}}\frac{dx_{1}}{\tau_{0}|x_{1}|^{1+\alpha}}+\int_{|x_{1}|<\varepsilon^{\bar{\gamma}}}\frac{dx_{1}}{\varepsilon+\tau_{0}|x_{1}|^{1+\alpha}}\right)+C^{\ast}+O(1)|\ln\varepsilon|.
\end{align*}
Since
\begin{align*}
&\int_{\varepsilon^{\bar{\gamma}}<|x_{1}|<r_{0}}\frac{dx_{1}}{\tau_{0}|x_{1}|^{1+\alpha}}+\int_{|x_{1}|<\varepsilon^{\bar{\gamma}}}\frac{dx_{1}}{\varepsilon+\tau_{0}|x_{1}|^{1+\alpha}}\notag\\
=&\int_{-\infty}^{+\infty}\frac{1}{\varepsilon+\tau_{0}|x_{1}|^{1+\alpha}}-\int_{|x_{1}|>r_{0}}\frac{dx_{1}}{\tau_{0}|x_{1}|^{1+\alpha}}+\int_{|x_{1}|>\varepsilon^{\bar{\gamma}}}\frac{\varepsilon}{\tau_{0}|x_{1}|^{1+\alpha}(\varepsilon+\tau_{0}|x_{1}|^{1+\alpha})}\notag\\
=&\mathcal{M}_{\alpha,\tau_{0}}\varepsilon^{-\frac{\alpha}{1+\alpha}}-\frac{2}{\alpha\tau_{0}r_{0}^{\alpha}}+O(1)\varepsilon^{1-(1+2\alpha)\bar{\gamma}},
\end{align*}
then we obtain that for $i=1,2,$
\begin{align*}
a_{ii}=&\mathcal{L}_{2}^{i}\mathcal{M}_{\alpha,\tau_{0}}\varepsilon^{-\frac{\alpha}{1+\alpha}}+\mathcal{K}^{\ast}+O(1)|\ln\varepsilon|,
\end{align*}
where
\begin{align}\label{LKM}
\mathcal{K}^{\ast}=C^{\ast}-\frac{2\mathcal{L}_{2}^{i}}{\alpha\tau_{0}r_{0}^{\alpha}}.
\end{align}

\end{proof}

\begin{proof}[Proof of Example \ref{CORO001}]

Denote
\begin{align}\label{QKLP001}
\mathcal{G}^{\ast}_{i}=\frac{\mathcal{K}^{\ast}}{\mathcal{L}_{2}^{i}\mathcal{M}_{\alpha,\tau_{0}}},\quad i=1,2.
\end{align}
In light of \eqref{KAN001}, we deduce
\begin{align}\label{DYA001}
\frac{1}{a_{ii}}=&\frac{\varepsilon^{\frac{\alpha}{1+\alpha}}}{\mathcal{L}_{2}^{i}\mathcal{M}_{\alpha,\tau_{0}}}\frac{1}{1-\frac{\mathcal{L}_{2}^{i}\mathcal{M}_{\alpha,\tau_{0}}-\varepsilon^{\frac{\alpha}{1+\alpha}}a_{ii}}{\mathcal{L}_{2}^{i}\mathcal{M}_{\alpha,\tau_{0}}}}=\frac{\varepsilon^{\frac{\alpha}{1+\alpha}}}{\mathcal{L}_{2}^{i}\mathcal{M}_{\alpha,\tau_{0}}}\frac{1}{1+\mathcal{G}^{\ast}_{i}\varepsilon^{\frac{\alpha}{1+\alpha}}+O(\varepsilon^{\frac{\alpha}{1+\alpha}}|\ln\varepsilon|)}\notag\\
=&\frac{\varepsilon^{\frac{\alpha}{1+\alpha}}}{\mathcal{L}_{2}^{i}\mathcal{M}_{\alpha,\tau_{0}}}\frac{1+O(\varepsilon^{\frac{\alpha}{1+\alpha}}|\ln\varepsilon|)}{1+\mathcal{G}^{\ast}_{i}\varepsilon^{\frac{\alpha}{1+\alpha}}}.
\end{align}
Similarly as in \eqref{QGH01}--\eqref{QGH02}, utilizing \eqref{KAN001}--\eqref{KAN002} and \eqref{DYA001}, it follows from the Cramer's rule that for $i=1,2,$
\begin{align*}
C^{i}=&\frac{\prod\limits_{j\neq i}^{2}a_{jj}\det\mathbb{B}_{i}[\varphi]}{\prod\limits_{j=1}^{3}a_{jj}}(1+O(\varepsilon^{\frac{\alpha}{1+\alpha}}|\ln\varepsilon|))\notag\\
=&\frac{\det\mathbb{B}_{i}^{\ast}[\varphi]}{a_{33}^{\ast}}\frac{\varepsilon^{\frac{\alpha}{1+\alpha}}}{\mathcal{L}_{2}^{i}\mathcal{M}_{\alpha,\tau_{0}}}\frac{1+O(\varepsilon^{\min\{\frac{\alpha^{2}}{2(1+2\alpha)(1+\alpha)^{2}},\frac{(1-\alpha)\alpha}{2(1+2\alpha)}\}})}{1+\mathcal{G}^{\ast}_{i}\varepsilon^{\frac{\alpha}{1+\alpha}}},
\end{align*}
and
\begin{align*}
C^{3}=&\frac{Q_{3}[\varphi]}{a_{33}}(1+O(\varepsilon^{\frac{\alpha}{1+\alpha}}))=\frac{Q_{3}^{\ast}[\varphi]}{a_{33}^{\ast}}(1+O(\varepsilon^{\frac{\alpha}{2(1+2\alpha)}})).
\end{align*}
This, together with decomposition \eqref{Le2.015}, Theorems \ref{lem89999} and \ref{thm86}, yields that
\begin{align*}
\nabla u=&\sum^{2}_{i=1}\frac{\det\mathbb{B}_{i}^{\ast}[\varphi]}{a_{33}^{\ast}}\frac{\varepsilon^{\frac{\alpha}{1+\alpha}}}{\mathcal{L}_{2}^{i}\mathcal{M}_{\alpha,\tau_{0}}}\frac{1+O(\varepsilon^{\min\{\frac{\alpha^{2}}{2(1+2\alpha)(1+\alpha)^{2}},\frac{(1-\alpha)\alpha}{2(1+2\alpha)}\}})}{1+\mathcal{G}^{\ast}_{i}\varepsilon^{\frac{\alpha}{1+\alpha}}}(\nabla\bar{u}_{i}+O(\delta^{-\frac{1}{1+\alpha}}))\notag\\
&+\frac{Q_{3}^{\ast}[\varphi]}{a_{33}^{\ast}}(1+O(\varepsilon^{\frac{\alpha}{2(1+2\alpha)}}))(\nabla\bar{u}_{3}+O(1))+\nabla\bar{u}_{0}+O(1)\|\varphi\|_{C^{1}(\partial D)}\notag\\
=&\sum^{2}_{i=1}\frac{\det\mathbb{B}_{i}^{\ast}[\varphi]}{a_{33}^{\ast}}\frac{\varepsilon^{\frac{\alpha}{1+\alpha}}}{\mathcal{L}_{2}^{i}\mathcal{M}_{\alpha,\tau_{0}}}\frac{1+O(\varepsilon^{\min\{\frac{\alpha^{2}}{2(1+2\alpha)(1+\alpha)^{2}},\frac{(1-\alpha)\alpha}{2(1+2\alpha)}\}})}{1+\mathcal{G}^{\ast}_{i}\varepsilon^{\frac{\alpha}{1+\alpha}}}\nabla\bar{u}_{i}\notag\\
&+\frac{Q_{3}^{\ast}[\varphi]}{a_{33}^{\ast}}(1+O(\varepsilon^{\frac{\alpha}{2(1+2\alpha)}}))\nabla\bar{u}_{3}+\nabla\bar{u}_{0}+O(1)\delta^{-\frac{1-\alpha}{1+\alpha}}\|\varphi\|_{C^{1}(\partial D)}.
\end{align*}
This completes the proof.

\end{proof}

\section{Appendix:\,The proofs of Theorems \ref{CL001} and \ref{CL002}}

\subsection{$C^{1,\alpha}$ estimates.}
Let $Q\subseteq\mathbb{R}^{d}$ be a Lipschitz domain and introduce the Campanato space $\mathcal{L}^{2,\lambda}(Q)$, $\lambda\geq0$ as follows:
\begin{align*}
\mathcal{L}^{2,\lambda}(Q):=\bigg\{u\in L^{2}(Q):\,\sup\limits_{\scriptstyle x_{0}\in Q\atop\scriptstyle\;\;
\rho>0\hfill}\frac{1}{\rho^{\lambda}}\int_{B_{\rho}(x_{0})\cap Q}|u-u_{x_{0},\rho}|^{2}dx<+\infty \bigg\},
\end{align*}
where $u_{x_{0},\rho}:=\frac{1}{|Q\cap B_{\rho}(x_{0})|}\int_{Q\cap B_{\rho}(x_{0})}u(x)\,dx$. We endow the Campanato space  $\mathcal{L}^{2,\lambda}(Q)$ with the semi-norm
\begin{align*}
[u]^{2}_{\mathcal{L}^{2,\lambda}(Q)}:=\sup\limits_{\scriptstyle x_{0}\in Q\atop\scriptstyle\;\;
\rho>0\hfill}\frac{1}{\rho^{\lambda}}\int_{B_{\rho}(x_{0})\cap Q}|u-u_{x_{0},\rho}|^{2}dx,
\end{align*}
and the norm
\begin{align*}
\|u\|_{\mathcal{L}^{2,\lambda}(Q)}:=\|u\|_{L^{2}(Q)}+[u]_{\mathcal{L}^{2,\lambda}(Q)}.
\end{align*}
It is well known that the Campanato space $\mathcal{L}^{2,\lambda}(Q)$ is equivalent to the H\"{o}lder space $C^{0,\alpha}(Q)$ in the case of $d<\lambda\leq d+2$ and $\alpha=\frac{\lambda-d}{2}$.

We first state a classical result in Theorem 5.14 of \cite{GM2013}.
\begin{theorem}\label{ASDL0}
Let $Q\subset\mathbb{R}^{d}$ be a Lipschitz domain. Let $w\in H^{1}(Q;\mathbb{R}^{d})$ be a solution of
\begin{align*}
-\partial_{j}(C^{0}_{ijkl}\partial_{l}w^{k})=\partial_{j}f_{ij}, \quad in\; Q,
\end{align*}
with $f_{ij}\in C^{0,\alpha}(Q)$, $0<\alpha<1$, and constant coefficients $C_{ijkl}^{0}$ satisfying \eqref{coeffi001}. Then $\nabla w\in\mathcal{L}^{2,d+2\alpha}_{loc}(Q)$ and for $B_{R}:=B_{R}(x_{0})\subset Q$,
\begin{align*}
\|\nabla w\|_{\mathcal{L}^{2,d+2\alpha}(B_{R/2})}\leq C(\|\nabla w\|_{L^{2}(B_{R})}+[F]_{\mathcal{L}^{2,d+2\alpha}(B_{R})}),
\end{align*}
where $F=(f_{ij})$ and $C=C(d,\alpha,R)$.
\end{theorem}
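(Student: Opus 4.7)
The plan is to prove Theorem~\ref{ASDL0} by the classical Campanato--Schauder comparison scheme. Since the coefficients $C^0_{ijkl}$ are already constant, no coefficient freezing is needed; one only freezes the source. Fix $y\in B_{R/2}(x_0)$ and $0<\rho\leq R/2$. I would introduce the comparison function $v\in H^1(B_\rho(y);\mathbb{R}^d)$ defined by
\begin{align*}
-\partial_j(C^0_{ijkl}\partial_l v^k)=0\;\text{ in }B_\rho(y),\qquad v=w\;\text{ on }\partial B_\rho(y),
\end{align*}
so that $h:=w-v\in H^1_0(B_\rho(y);\mathbb{R}^d)$ solves
\begin{align*}
-\partial_j(C^0_{ijkl}\partial_l h^k)=\partial_j\bigl(f_{ij}-(f_{ij})_{y,\rho}\bigr)\;\text{ in }B_\rho(y),
\end{align*}
where the constant mean may be subtracted because the source is in divergence form. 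Testing against $h$ and using the Lam\'{e} ellipticity \eqref{ellip} on the symmetric gradient together with the first Korn inequality (valid on $H^1_0$), I would obtain
\begin{align*}
\int_{B_\rho(y)}|\nabla h|^2\,dx\leq C\int_{B_\rho(y)}\bigl|F-(F)_{y,\rho}\bigr|^2\,dx\leq C\rho^{d+2\alpha}\,[F]^2_{\mathcal{L}^{2,d+2\alpha}(B_R)}.
\end{align*}

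The second step is excess decay for $v$. Since $v$ solves a constant-coefficient elliptic system, its difference quotients solve the same system, so interior regularity gives $v\in C^\infty(B_{\rho/2}(y))$ together with the scale-invariant bound
\begin{align*}
\rho^{2+d}\,\|D^2v\|_{L^\infty(B_{\rho/2}(y))}^2\leq C\int_{B_\rho(y)}|\nabla v-(\nabla v)_{y,\rho}|^2\,dx.
\end{align*}
A first-order Taylor expansion at $y$ then yields, for every $0<r\leq\rho/2$,
\begin{align*}
\int_{B_r(y)}\bigl|\nabla v-(\nabla v)_{y,r}\bigr|^2\,dx\leq C\Bigl(\frac{r}{\rho}\Bigr)^{d+2}\int_{B_\rho(y)}\bigl|\nabla v-(\nabla v)_{y,\rho}\bigr|^2\,dx.
\end{align*}
Combining this with the energy estimate for $h$ via $\nabla w=\nabla v+\nabla h$ and the triangle inequality produces the recursion
\begin{align*}
\Phi(r)\leq C\Bigl(\tfrac{r}{\rho}\Bigr)^{d+2}\Phi(\rho)+C\rho^{d+2\alpha}[F]^2_{\mathcal{L}^{2,d+2\alpha}(B_R)},
\end{align*}
where $\Phi(t):=\int_{B_t(y)}|\nabla w-(\nabla w)_{y,t}|^2\,dx$. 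The standard iteration lemma with exponents $d+2>d+2\alpha$ (see e.g.\ \cite{GM2013}) then yields $\Phi(r)\leq Cr^{d+2\alpha}\bigl(R^{-(d+2\alpha)}\|\nabla w\|_{L^2(B_R)}^2+[F]^2_{\mathcal{L}^{2,d+2\alpha}(B_R)}\bigr)$, from which the theorem follows by taking the supremum over $y\in B_{R/2}(x_0)$ and $0<r<R/2$.

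The main obstacle I anticipate is the clean passage from $|e(h)|^2$ to $|\nabla h|^2$ in the energy estimate: one must invoke the first Korn inequality on a ball with zero trace, rather than on the ambient Lipschitz domain $Q$, so that the constant is scale invariant and independent of $\rho$. A secondary technical point is rigorously justifying the $L^\infty$-in-$L^2$ bound on $D^2v$; this is standard but relies on the fact that the Lam\'{e} ellipticity constants $\min\{2\mu,d\lambda+2\mu\}$ are uniformly bounded below, so interior Schauder theory applies with universal constants after rescaling to $B_\rho(y)$.
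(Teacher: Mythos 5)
The paper does not actually prove this statement: it introduces Theorem~\ref{ASDL0} with the phrase ``We first state a classical result in Theorem 5.14 of \cite{GM2013}'' and then uses it as a black box. Your proposal therefore supplies a proof where the paper offers only a citation, and it is the correct one: the freezing-free comparison $v$ solving the homogeneous constant-coefficient system with $v=w$ on $\partial B_\rho(y)$, the energy bound for $h=w-v$ via Lam\'e ellipticity plus the first Korn inequality on $H^1_0(B_\rho)$ (which, as you note, gives a scale-invariant constant), the interior excess-decay estimate for $v$, and the iteration lemma with exponents $d+2>d+2\alpha$ is exactly the Campanato-scheme argument underlying Giaquinta--Martinazzi's Theorem 5.14. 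Two small points worth tightening: in deriving the recursion for $\Phi(r)$ you should use the minimality of the mean (replace $(\nabla w)_{y,r}$ by $(\nabla v)_{y,r}$ before splitting) and then control $\int_{B_\rho}|\nabla v-(\nabla v)_{y,\rho}|^2$ by $\Phi(\rho)+\int_{B_\rho}|\nabla h|^2$, so the $h$-term enters the recursion cleanly; and the $L^\infty$ bound on $D^2v$ rests on Caccioppoli plus iterated difference quotients for constant-coefficient systems rather than Schauder theory per se, though the conclusion is the same. Neither affects the validity of the argument.
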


In light of the equivalence between the Campanato space and the H\"{o}lder space, it follows from the proof of Theorem \ref{ASDL0} (Theorem 5.14 of \cite{GM2013}) that
\begin{corollary}\label{CO001}
Assume as in Lemma \ref{CL001}. Let $w$ be the solution of \eqref{ADCo1}. Then for $B_{R}:=B_{R}(x_{0})\subset Q$,
\begin{align}\label{LAG001}
[\nabla w]_{\alpha,B_{R/2}}\leq C\Big(\frac{1}{R^{1+\alpha}}\|w\|_{L^{\infty}(B_{R})}+[F]_{\alpha,B_{R}} \Big),
\end{align}
where $C=C(d,\alpha,R)$.

\end{corollary}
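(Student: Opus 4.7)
The plan is to derive the corollary as a direct consequence of Theorem~\ref{ASDL0}, combined with two routine ingredients: (i) the Campanato--Hölder equivalence $\mathcal{L}^{2,d+2\alpha}(B_r)\simeq C^{0,\alpha}(B_r)$, whose semi-norm comparison constants depend only on $d$ and $\alpha$, and (ii) a Caccioppoli energy inequality. In outline, Theorem~\ref{ASDL0} controls $[\nabla w]_{\mathcal{L}^{2,d+2\alpha}}$ on a small ball in terms of $\|\nabla w\|_{L^2}$ and $[F]_{\mathcal{L}^{2,d+2\alpha}}$ on a larger one; the equivalence upgrades this Campanato bound for $\nabla w$ to the target Hölder bound and converts the Campanato norm of $F$ on the right into $[F]_{\alpha,B_R}$; Caccioppoli then absorbs $\|\nabla w\|_{L^2}$ into $R^{-(1+\alpha)}\|w\|_{L^\infty(B_R)}$ and $[F]_{\alpha,B_R}$.

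\textbf{The Caccioppoli step.} I would run Caccioppoli on the concentric pair $B_{3R/4}(x_0)\subset B_R(x_0)$. Since the right-hand side of~\eqref{ADCo1} is in divergence form, one may replace $f_{ij}$ by $f_{ij}-c_{ij}$ for any constant matrix $c=(c_{ij})$. Testing the equation against $w\eta^2$ with a cutoff $\eta\equiv1$ on $B_{3R/4}(x_0)$, $\eta\equiv0$ outside $B_R(x_0)$, $|\nabla\eta|\leq C/R$, and using ellipticity together with Young's inequality, then choosing $c=(F)_{B_R}$ and invoking the oscillation bound $|F-c|\leq CR^\alpha[F]_{\alpha,B_R}$ on $B_R$, would yield
\[
\|\nabla w\|_{L^2(B_{3R/4}(x_0))}\leq CR^{d/2-1}\|w\|_{L^\infty(B_R)}+CR^{d/2+\alpha}[F]_{\alpha,B_R}.
\]

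\textbf{Applying Theorem~\ref{ASDL0} and patching.} For each $y_0\in B_{R/2}(x_0)$ one has $B_{R/4}(y_0)\subset B_{3R/4}(x_0)$. Applying Theorem~\ref{ASDL0} with center $y_0$ and radius $R/4$, then invoking the Campanato--Hölder equivalence on the left and the trivial comparison $[F]_{\mathcal{L}^{2,d+2\alpha}(B_{R/4}(y_0))}\leq C[F]_{\alpha,B_R}$ on the right, would give
\[
[\nabla w]_{\alpha,B_{R/8}(y_0)}\leq C\bigl(\|\nabla w\|_{L^2(B_{3R/4}(x_0))}+[F]_{\alpha,B_R}\bigr).
\]
Plugging in the Caccioppoli estimate above and absorbing all $R$-powers into the $R$-dependent constant produces the local bound $[\nabla w]_{\alpha,B_{R/8}(y_0)}\leq C(R^{-(1+\alpha)}\|w\|_{L^\infty(B_R)}+[F]_{\alpha,B_R})$ uniformly in $y_0\in B_{R/2}(x_0)$. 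To upgrade this to the whole of $B_{R/2}(x_0)$, I would split an arbitrary pair $y,z\in B_{R/2}(x_0)$ into the case $|y-z|<R/8$ (both points lie in some common $B_{R/8}(y_0)$, so the local estimate applies directly) and the case $|y-z|\geq R/8$ (handled by $|\nabla w(y)-\nabla w(z)|/|y-z|^\alpha\leq 2(R/8)^{-\alpha}\|\nabla w\|_{L^\infty(B_{R/2}(x_0))}$, with $\|\nabla w\|_{L^\infty}$ itself controlled by the same local Hölder estimates).

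\textbf{Main obstacle.} The only nontrivial bookkeeping is keeping the $R$-dependent constants coherent throughout: the Campanato--Hölder equivalence contributes $R$-independent factors, but Theorem~\ref{ASDL0} and the Caccioppoli absorption each carry $R$-powers, and these must combine to exactly the weight $R^{-(1+\alpha)}$ on the $\|w\|_{L^\infty(B_R)}$ term while leaving the final constant in the form $C=C(d,\alpha,R)$ as claimed. Theorem~\ref{ASDL0} and Caccioppoli are themselves standard and are used as black boxes.
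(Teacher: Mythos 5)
Your proof is correct, and the two key ingredients --- the Campanato--H\"older equivalence and a Caccioppoli inequality that absorbs $\|\nabla w\|_{L^2}$ into $\|w\|_{L^\infty}$ --- are exactly what the paper's terse ``it follows from the proof of Theorem~\ref{ASDL0} (Theorem 5.14 of \cite{GM2013})'' leaves implicit. Your covering argument over balls $B_{R/8}(y_0)$ with $y_0\in B_{R/2}(x_0)$ is a clean way to use Theorem~\ref{ASDL0} as a black box rather than re-opening the proof in \cite{GM2013}, and the bookkeeping concern you raise at the end is moot: since the stated constant is $C=C(d,\alpha,R)$, the explicit $R^{-(1+\alpha)}$ weight on $\|w\|_{L^\infty(B_R)}$ need not emerge exactly from the chain of inequalities --- any surviving power of $R$ can simply be absorbed into $C$.
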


\begin{proof}[Proof of Lemma \ref{CL001}]
In light of the definition of a $C^{1,\alpha}$ domain, at each point $x_{0}\in\Gamma$ there is a neighbourhood $U$ of $x_{0}$ and a homeomorphism $\Psi\in C^{1,\alpha}(U)$ that straightens the boundary in $U$, that is,
\begin{align*}
\Psi(U\cap Q)=\mathcal{B}^{+}_{1}=\{y\in\mathcal{B}_{R}(0):y_{d}>0\},\quad \Psi(U\cap \Gamma)=\partial\mathcal{B}^{+}_{1}=\{y\in\mathcal{B}_{R}(0):y_{d}=0\},
\end{align*}
where $\mathcal{B}_{1}(0):=\{y\in\mathbb{R}^{d}:|y|<1\}$. Under the mapping $y=\Psi(x)=(\Psi^{1}(x),...,\Psi^{d}(x))$, we define
\begin{align*}
\mathcal{W}(y):=w(\Phi^{-1}(y)),\quad \mathcal{J}:=\frac{\partial((\Psi^{-1})^{1},...,(\Psi^{-1})^{d})}{\partial(y^{1},...,y^{d})},\quad |\mathcal{J}(y)|:=\det\mathcal{J}(y),
\end{align*}
and
\begin{align*}
\mathcal{C}^{0}_{ijkl}(y):=&C^{0}_{i\hat{j}k\hat{l}}|\mathcal{J}(y)|(\partial_{\hat{l}}(\Psi^{-1})^{l}(y))^{-1}\partial_{\hat{j}}\Psi^{j}(\Psi^{-1}(y)),\\
\mathcal{F}_{ij}(y):=&|\mathcal{J}(y)|\partial_{\hat{l}}\Psi^{j}(\Psi^{-1}(y))f_{i\hat{l}}(\Psi^{-1}(y)).
\end{align*}
Recalling \eqref{ADCo1} and by virtue of this transformation, then $\mathcal{W}$ satisfies
\begin{align}\label{LAH001}
\begin{cases}
-\partial_{j}(\mathcal{C}^{0}_{ijkl}(y)\partial_{l}\mathcal{W}^{k})=\partial_{j}\mathcal{F}_{ij},&\quad\mathrm{in}\;\mathcal{B}^{+}_{R},\\
\mathcal{W}=0,&\quad\mathrm{on}\;\partial \mathcal{B}_{R}^{+}\cap\partial\mathbb{R}^{d}_{+},
\end{cases}
\end{align}
where $0<R\leq1$. Denote $y_{0}=\Psi(x_{0})$. By freezing the coefficients, we rewrite equation \eqref{LAH001} as follows:
\begin{align*}
-\partial_{j}(\mathcal{C}^{0}_{ijkl}(y_{0})\partial_{l}\mathcal{W}^{k})=\partial_{j}((\mathcal{C}^{0}_{ijkl}(y)-\mathcal{C}^{0}_{ijkl}(y_{0}))\partial_{l}\mathcal{W}^{k})+\partial_{j}\mathcal{F}_{ij}.
\end{align*}
In view of the equivalence between the Campanato space and the H\"{o}lder space and using the proof of Theorem 7.1 (Theorem 5.14 of \cite{GM2013}) again, we deduce that
\begin{align*}
[\nabla\mathcal{W}]_{\alpha,\mathcal{B}^{+}_{R/2}}\leq& C\Big(\frac{1}{R^{1+\alpha}}\|\mathcal{W}\|_{L^{\infty}(\mathcal{B}^{+}_{R})}+[\mathcal{F}]_{\alpha,\mathcal{B}^{+}_{R}}\Big)\notag\\
&+C[(\mathcal{C}^{0}_{ijkl}(y)-\mathcal{C}^{0}_{ijkl}(y_{0}))\partial_{l}\mathcal{W}^{k}]_{\alpha,\mathcal{B}^{+}_{R}},
\end{align*}
where $\mathcal{F}:=(\mathcal{F}_{ij})$. Since $\mathcal{C}^{0}_{ijkl}(y)\in C^{0,\alpha}$, then we have
\begin{align*}
[(\mathcal{C}^{0}_{ijkl}(y)-\mathcal{C}^{0}_{ijkl}(y_{0}))\partial_{l}\mathcal{W}^{k}]_{\alpha,\mathcal{B}^{+}_{R}}\leq C(R^{\alpha}[\nabla\mathcal{W}]_{\alpha,\mathcal{B}^{+}_{R}}+\|\nabla\mathcal{W}\|_{L^{\infty}(\mathcal{B}^{+}_{R})}).
\end{align*}
Applying the interpolation inequality (for example, see Lemma 6.32 in \cite{GT1998}), we obtain
\begin{align*}
\|\nabla\mathcal{W}\|_{L^{\infty}(\mathcal{B}^{+}_{R})}\leq R^{\alpha}[\nabla\mathcal{W}]_{\alpha,\mathcal{B}^{+}_{R}}+\frac{C}{R}\|\mathcal{W}\|_{L^{\infty}(\mathcal{B}^{+}_{R})},
\end{align*}
where $C=C(d)$. Then, we have
\begin{align}\label{HMA001}
[\nabla\mathcal{W}]_{\alpha,\mathcal{B}^{+}_{R/2}}\leq&C\Big(\frac{1}{R^{1+\alpha}}\|\mathcal{W}\|_{L^{\infty}(\mathcal{B}^{+}_{R})}+R^{\alpha}[\nabla\mathcal{W}]_{\alpha,\mathcal{B}^{+}_{R}}+[\mathcal{F}]_{\alpha,\mathcal{B}^{+}_{R}}\Big).
\end{align}
Since $\Psi$ is a homeomorphism, then the norms in \eqref{HMA001} defined on $\mathcal{B}^{+}_{R}$ are equivalent to those on $\mathcal{N}=\Psi^{-1}(\mathcal{B}^{+}_{R})$. Then back to $w$, we have
\begin{align*}
[\nabla w]_{\alpha,\mathcal{N}^{'}}\leq&C\Big(\frac{1}{R^{1+\alpha}}\|w\|_{L^{\infty}(\mathcal{N})}+R^{\alpha}[\nabla w]_{\alpha,\mathcal{N}}+[F]_{\alpha,\mathcal{N}}\Big),
\end{align*}
where $\mathcal{N}'=\Psi^{-1}(\mathcal{B}^{+}_{R/2})$ and $C=C(d,\alpha,\Psi)$. Moreover, there exists a constant $0<\sigma<1$, independent of $R$, such that $B_{\sigma R}(x_{0})\cap Q\subset\mathcal{N}'$.

Then for any domain $Q'\subset\subset Q\cup\Gamma$ and each $x_{0}\in Q'\cap\Gamma$, there exist $\mathcal{R}_{0}:=\mathcal{R}_{0}(x_{0})$ and $C_{0}=C_{0}(d,\alpha,x_{0})$ such that
\begin{align}\label{HMA003}
[\nabla w]_{\alpha,B_{\mathcal{R}_{0}}(x_{0})\cap Q'}\leq&C_{0}\Big(\mathcal{R}_{0}^{\alpha}[\nabla w]_{\alpha,Q'}+\frac{1}{\mathcal{R}_{0}^{1+\alpha}}\|w\|_{L^{\infty}(Q)}+[F]_{\alpha,Q}\Big).
\end{align}
Therefore, using the finite covering theorem for the collection $\{B_{\mathcal{R}_{0}/2}(x_{0})|\,x_{0}\in\Gamma\cap Q'\}$, we obtain that there exist finite $B_{\mathcal{R}_{i}}(x_{i})$, $i=1,2,...,K$, covering $\Gamma\cap Q'$. Denote the constant in \eqref{HMA003} corresponding to $x_{i}$ by $C_{i}$. Define
\begin{align*}
\overline{C}:=\max\limits_{1\leq i\leq K}\{C_{i}\},\quad \overline{\mathcal{R}}:=\min\limits_{1\leq i\leq K}\big\{\frac{\mathcal{R}_{i}}{2}\big\}.
\end{align*}
Then for every $x_{0}\in\Gamma\cap Q'$, there exists some $1\leq i_{0}\leq K$ such that $B_{\overline{R}}(x_{0})\subset B_{\overline{R}_{i_{0}}}(x_{i_{0}})$ and
\begin{align}\label{HMA005}
[\nabla w]_{\alpha,B_{\overline{R}}(x_{0})\cap Q'}\leq&\overline{C}\Big(\overline{R}^{\alpha}[\nabla w]_{\alpha,Q'}+\frac{1}{\overline{R}^{1+\alpha}}\|w\|_{L^{\infty}(Q)}+[F]_{\alpha,Q}\Big).
\end{align}

We now proceed to establish the corresponding estimates on $Q'$. Denote the constant in \eqref{LAG001} of Corollary \ref{CO001} by $\widetilde{C}$. Set
\begin{align*}
\widehat{C}:=\max\{\overline{C},\widetilde{C}\},\quad\widehat{\mathcal{R}}:=\min\{(3\widehat{C})^{-1/\alpha},\overline{\mathcal{R}}\}.
\end{align*}
For any $x_{1},x_{2}\in Q'$, then we have
\begin{enumerate}
\item[$(i)$]
$|x_{1}-x_{2}|\geq\frac{\widehat{\mathcal{R}}}{2}$;
\item[$(ii)$]
there exists some $1\leq i_{0}\leq K$ such that $x_{1},x_{2}\in B_{\widehat{R}/2}(x_{i_{0}})\cap Q'$;
\item[$(iii)$]
$x_{1},x_{2}\in B_{\widehat{R}/2}\subset Q'$.
\end{enumerate}
In the case of $(i)$, we obtain
\begin{align*}
\frac{|\nabla w(x_{1})-\nabla w(x_{2})|}{|x_{1}-x_{2}|^{\alpha}}\leq \frac{2^{1+\alpha}}{\widehat{R}^{\alpha}}\|\nabla w\|_{L^{\infty}(Q')}.
\end{align*}
In the case of $(ii)$, we see from \eqref{HMA005} that
\begin{align*}
\frac{|\nabla w(x_{1})-\nabla w(x_{2})|}{|x_{1}-x_{2}|^{\alpha}}\leq&[\nabla w]_{\alpha,B_{\widehat{R}/2(x_{i_{0}})\cap Q'}}\notag\\
\leq&\widehat{C}\Big(\widehat{R}^{\alpha}[\nabla w]_{\alpha,Q'}+\frac{1}{\widehat{R}^{1+\alpha}}\|w\|_{L^{\infty}(Q)}+[F]_{\alpha,Q}\Big).
\end{align*}
In the case of $(iii)$, it follows from Corollary \ref{CO001} that
\begin{align*}
\frac{|\nabla w(x_{1})-\nabla w(x_{2})|}{|x_{1}-x_{2}|^{\alpha}}\leq&[\nabla w]_{\alpha,B_{\widehat{R}/2}}\leq\widehat{C}\Big(\frac{1}{\widehat{R}^{1+\alpha}}\|w\|_{L^{\infty}(Q)}+[F]_{\alpha,Q}\Big).
\end{align*}
Consequently, we derive
\begin{align*}
[\nabla w]_{\alpha,Q'}\leq\widehat{C}\Big(\widehat{R}^{\alpha}[\nabla w]_{\alpha,Q'}+\frac{1}{\widehat{R}^{1+\alpha}}\|w\|_{L^{\infty}(Q)}+[F]_{\alpha,Q}\Big)+\frac{2^{1+\alpha}}{\widehat{R}^{\alpha}}\|\nabla w\|_{L^{\infty}(Q')}.
\end{align*}
Using the interpolation inequality (see Lemma 6.32 in \cite{GT1998}) again, we have
\begin{align*}
\frac{2^{1+\alpha}}{\widehat{R}^{\alpha}}\|\nabla w\|_{L^{\infty}(Q')}\leq \frac{1}{3}[\nabla w]_{\alpha,Q'}+\frac{C}{\widehat{R}^{1+\alpha}}\|w\|_{L^{\infty}(Q')},
\end{align*}
where $C=C(d,\alpha)$. Then in view of $\widehat{R}\leq(3\widehat{C})^{-1/\alpha}$, we obtain
\begin{align*}
[\nabla w]_{\alpha,Q'}\leq C(\|w\|_{L^{\infty}(Q)}+[F]_{\alpha,Q}),
\end{align*}
where $C=C(d,\alpha,Q',Q)$. By utilizing the interpolation inequality, we prove that \eqref{GNA001} holds.

\end{proof}

\subsection{$W^{1,p}$ estimates}
\begin{proof}[Proof of Lemma \ref{CL002}]
We first establish the $W^{1,p}$ interior estimates. In view of the fact that $w\neq0$ on $\partial B_{R}$ for any $B_{R}\subset Q$, we introduce a cut-off function $\eta\in C_{0}^{\infty}(B_{R})$ satisfying
\begin{align*}
0\leq\eta\leq1,\quad\eta=1\;\mathrm{in}\; B_{\rho},\quad|\nabla\eta|\leq\frac{C}{R-\rho}.
\end{align*}
From equation \eqref{ADCo1}, we see that $\eta w$ verifies
\begin{align*}
\int_{B_{R}}C^{0}_{ijkl}\partial_{l}(\eta w^{k})\partial_{j}\varphi^{i}=\int_{B_{R}}(G_{i}\varphi^{i}+\widetilde{F}\partial_{j}\varphi^{i}),\quad\forall\varphi\in C^{\infty}_{0}(B_{R};\mathbb{R}^{d}),
\end{align*}
where
\begin{align*}
G_{i}:=f_{ij}\partial_{j}\eta-C^{0}_{ijkl}\partial_{l}w^{k}\partial_{j}\eta,\quad\widetilde{F}_{ij}:=f_{ij}\eta+C^{0}_{ijkl}w^{k}\partial_{l}\eta.
\end{align*}
Let $v\in H^{1}_{0}(B_{R};\mathbb{R}^{d})$ be the weak solution of
\begin{align}\label{LLAM0}
-\Delta v^{i}=G_{i}.
\end{align}
Then $\eta w$ satisfies
\begin{align*}
\int_{B_{R}}C^{0}_{ijkl}\partial_{l}(\eta w^{k})\partial_{j}\varphi^{i}=\int_{B_{R}}\widehat{F}\partial_{j}\varphi^{i},\quad\forall\varphi\in C^{\infty}_{0}(B_{R};\mathbb{R}^{d}),
\end{align*}
where $\widehat{F}_{ij}:=\widetilde{F}_{ij}+\partial_{j}v^{i}$.

In light of $f_{ij}\in C^{0,\alpha}$, we know that $f_{ij}\in L^{p}(B_{R})$ for any $d\leq p<\infty$. Assume that $w\in W^{1,q}(B_{R};\mathbb{R}^{d})$, $q\geq2$. Then we obtain
\begin{align}\label{MBL001}
G_{i}&\in L^{p\wedge q}(B_{R}),\quad\mathrm{where}\;p\wedge q:=\min\{p,q\},
\end{align}
and
\begin{align}\label{MBL002}
\widetilde{F}_{ij}\in L^{p\wedge q^{\ast}}(B_{R}),\quad\mathrm{where}\;q^{\ast}:=&
\begin{cases}
\frac{dq}{d-q},&q<d,\\
2q,&q\geq d.
\end{cases}
\end{align}
Applying $L^{2}$ estimate to equation \eqref{LLAM0}, we see $\nabla^{2}v\in L^{2}(B_{R})$ and
\begin{align*}
-\Delta(\partial_{j}v^{i})=\partial_{j}G_{i}.
\end{align*}
Then in view of \eqref{MBL001}, it follows from Theorem 7.1 of \cite{GM2013} that $\nabla(\partial_{j}v^{i})\in L^{p\wedge q}(B_{R})$. Using the Sobolev embedding theorem, we derive that $\partial_{j}v^{i}\in L^{(p\wedge q)^{\ast}}$. This, together with \eqref{MBL002}, yields that $\widehat{F}_{ij}\in L^{p\wedge q^{\ast}}(B_{R})$. Further, utilizing Theorem 7.1 of \cite{GM2013} again, we deduce
\begin{align*}
\|\nabla(\eta w)\|_{L^{p\wedge q^{\ast}}(B_{R})}\leq C\|\widehat{F}\|_{L^{p\wedge q^{\ast}}(B_{R})},
\end{align*}
where $C=C(d,\lambda,\mu,p,q)$ and $\widehat{F}:=(\widehat{F}_{ij})$, $i,j=1,2,...,d$. Recalling the definition of $G_{i}$ and $\widetilde{F}_{ij}$, it follows from \eqref{MBL001}--\eqref{MBL002} that
\begin{align}\label{AGTQ001}
\|\nabla w\|_{L^{p\wedge q^{\ast}}(B_{\rho})}\leq\frac{C}{R-\rho}(\|w\|_{W^{1,p}(B_{R})}+\|F\|_{L^{p}(B_{R})}),
\end{align}
where $C=C(d,\lambda,\mu,p,q)$.

We proceed to demonstrate that $\nabla w\in L^{p}(B_{R/2})$. Pick a series of balls with radii as follows:
\begin{align*}
R_{k}=R\Big(\frac{1}{2}+\frac{1}{2^{k+1}}\Big),\quad k\geq0.
\end{align*}
First, taking $\rho=R_{1}$ and $q=2$ in \eqref{AGTQ001}, we then have
\begin{align*}
\|\nabla w\|_{L^{p\wedge 2^{\ast}}(B_{R_{1}})}\leq\frac{C}{R}(\|w\|_{W^{1,2}(B_{R})}+\|F\|_{L^{p}(B_{R})}).
\end{align*}
If $p\leq2^{\ast}$, then the proof is finished. If $p>2^{\ast}$, then $\nabla w\in L^{2^{\ast}}(B_{R_{1}})$ and
\begin{align*}
\|\nabla w\|_{L^{2^{\ast}}(B_{R_{1}})}\leq\frac{C}{R}(\|w\|_{W^{1,2}(B_{R})}+\|F\|_{L^{p}(B_{R})}).
\end{align*}
This, in combination with choosing $R=R_{1}$, $\rho=R_{2}$ and $q=2^{\ast}$ in \eqref{AGTQ001}, reads that
\begin{align*}
\|\nabla w\|_{L^{p\wedge2^{\ast\ast}}(B_{R_{2}})}\leq&\frac{C}{R}(\|w\|_{W^{1,2^{\ast}}(B_{R_{1}})}+\|F\|_{L^{p}(B_{R_{1}})})\notag\\
\leq&\frac{C}{R^{2}}(\|w\|_{W^{1,2}(B_{R})}+\|F\|_{L^{p}(B_{R})}).
\end{align*}
If $p\leq2^{\ast\ast}$, then the proof is finished. If $p>2^{\ast\ast}$, repeating the above argument with finite steps, we deduce that $\nabla w\in L^{p}(B_{R/2})$ and
\begin{align}\label{JANT001}
\|\nabla w\|_{L^{p}(B_{R/2})}\leq C(\|w\|_{H^{1}(B_{R})}+\|F\|_{L^{p}(B_{R})}),
\end{align}
where $C=C(d,\lambda,\mu,p,\mathrm{dist}(B_{R},\partial Q))$.

Next, we prove the $W^{1,p}$ estimates near boundary $\Gamma$ by using the method of locally flattening the boundary, which is the same to the proof in Lemma \ref{CL001}. For brevity, we employ the same notations as before. Therefore, we know that $\mathcal{W}(y):=w(\Psi^{-1}(y))\in H^{1}(\mathcal{B}^{+}_{R},\mathbb{R}^{d})$ verifies
\begin{align*}
\int_{\mathcal{B}^{+}_{R}}\mathcal{C}^{0}_{ijkl}(y)\partial_{l}\mathcal{W}^{k}\partial_{j}\varphi^{i}dy=\int_{\mathcal{B}^{+}_{R}}\mathcal{F}_{ij}\partial_{j}\varphi^{i}dy,\quad\forall\varphi\in H^{1}_{0}(\mathcal{B}^{+}_{R},\mathbb{R}^{d}).
\end{align*}
By following the proof of Theorem 7.2 of \cite{GM2013}, we deduce that for any $d\leq p<\infty$,
\begin{align*}
\|\nabla\mathcal{W}\|_{L^{p}(\mathcal{B}^{+}_{R/2})}\leq C(\|\mathcal{W}\|_{H^{1}(\mathcal{B}^{+}_{R})}+\|\mathcal{F}\|_{L^{p}(\mathcal{B}^{+}_{R})}),
\end{align*}
where $C=C(\lambda,\mu,p,R,\Psi)$. Then back to $w$, we have
\begin{align*}
\|\nabla w\|_{L^{p}(\mathcal{N}^{'})}\leq C(\|\mathcal{W}\|_{H^{1}(\mathcal{N})}+\|\mathcal{F}\|_{L^{p}(\mathcal{N})}),
\end{align*}
where $\mathcal{N}'=\Psi^{-1}(\mathcal{B}^{+}_{R/2})$, $\mathcal{N}=\Psi^{-1}(\mathcal{B}^{+}_{R})$ and $C=C(\lambda,\mu,p,R,\Psi)$. Moreover, there exists a constant $0<\sigma<1$, independent of $R$, such that $B_{\sigma R}\cap Q\subset\mathcal{N}'$.

Then for any $x_{0}\in Q'\cap\Gamma$, there exists $R_{0}:=R_{0}(x_{0})>0$ such that
\begin{align}\label{JANT002}
\|\nabla w\|_{L^{p}(B_{\sigma R_{0}}(x_{0})\cap Q')}\leq C(\|w\|_{H^{1}(Q)}+\|F\|_{L^{p}(Q)}),
\end{align}
where $C=C(\lambda,\mu,p,x_{0},R)$. In view of \eqref{JANT001}--\eqref{JANT002}, it follows from the finite covering theorem that
\begin{align*}
\|\nabla w\|_{L^{p}(Q')}\leq C(\|w\|_{H^{1}(Q)}+\|F\|_{L^{p}(Q)}),
\end{align*}
where $C=C(\lambda,\mu,p,Q',Q)$. This, together with the Poincar\'{e} inequality, yields that \eqref{LNZ001} holds.

Note that for any constant matrix $\mathcal{M}=(\mathfrak{a}_{ij})$, $i,j=1,2,...,d$, $w$ satisfies \eqref{ADCo1} with $F$ replaced by $F-\mathcal{M}$. Then using the continuous injection that $W^{1,p}\hookrightarrow C^{0,\gamma}$, $0<\gamma\leq1-d/p$, we deduce that \eqref{LNZ002} holds.

\end{proof}


\bibliographystyle{plain}

\end{document}